 \newenvironment{listi}
  {\begin{list} 
 {(\roman{broj})}
{ \usecounter{broj}}
     \setlength{\labelwidth}{25pt}
  }
{   \end{list} }
\newcounter{broj}
\newenvironment{list1}
  {
  \begin{list}
 {\textsc{\arabic{broj1}.} }
 {\usecounter{broj1}
  \setlength{\itemindent}{0pt}
  \setlength{\listparindent}{0pt}
  \setlength{\itemsep}{-2pt}}
  \setlength{\labelwidth}{60pt}
  \setlength{\parsep}{0pt}
  }
  {\end{list}
  }
  \newcounter{broj1}
\newtheorem{theorem}{Theorem}[section]
\newtheorem{lemma}[theorem]{Lemma}
\newtheorem{proposition}[theorem]{Proposition}
\theoremstyle{remark}
\newtheorem{remark}[theorem]{Remark}
\newtheorem{example}[theorem]{Example}
\newcommand{\R}{\mathbb{R}}
\newcommand{\N}{\mathbb{N}}
\newcommand{\te}{\textrm}
\newcommand{\tacka}{\,\cdot\,}
\newcommand{\veps}{\varepsilon}
\DeclareMathOperator{\supp}{supp}
\DeclareMathOperator{\Lip}{Lip}
\DeclareMathOperator{\divv}{div}
\DeclareMathOperator{\Tan}{Tan}
\definecolor{mygreen}{rgb}{0.1,0.75,0.2}
\definecolor{darkgreen}{rgb}{0.1,0.6,0.2}
\definecolor{darkred}{rgb}{0.6,0,0}
\definecolor{lightgray}{rgb}{0.5,0.5,0.5}
\definecolor{turquoise}{rgb}{0.2,0.84,0.78}
\newcommand{\De}[1]{{\color{darkgreen}#1}}
\newcommand{\Yu}[1]{{\color{red}#1}}
\newcommand{\Ru}[1]{{\color{violet}#1}}
\def\dd{\mathrm{d}}
\title{HV geometry for signal comparison}
\author{Ruiyu Han}
\address[Ruiyu Han and Dejan Slep\v{c}ev] {Department of Mathematical Sciences\\Carnegie Mellon University\\Pittsburgh, PA 15213}
\email{ruiyuh@andrew.cmu.edu, slepcev@math.cmu.edu}
\author{Dejan Slep\v{c}ev}
\author{Yunan Yang}
\address[Yunan Yang]
{Institute for Theoretical Studies\\
ETH Z\"urich\\
Z\"urich, Switzerland 8092}
\email{yunan.yang@eth-its.ethz.ch}
\thanks{}
\begin{document}
\date{\today}
\maketitle


{\footnotesize \emph{Dedicated to Bob Pego whose knowledge and love of mathematics greatly inspire us. }\par }

\begin{abstract}
In order to compare and interpolate signals, we investigate a Riemannian geometry on the space of signals. The metric allows discontinuous signals and measures both horizontal (thus providing many benefits of the Wasserstein metric) and vertical deformations. Moreover, it allows for signed signals, which overcomes the main deficiency of optimal transportation-based metrics in signal processing. We characterize the metric properties of the space of signals and establish the regularity and stability of geodesics. Furthermore, we introduce an efficient numerical scheme to compute the geodesics and present several experiments which highlight the nature of the metric. 
\end{abstract}




\section{Introduction}
In~\cite{MilYou01}, Miller and Younes introduced a deformation-based geometry on the space of $L^2$ functions in arbitrary dimension. It allows for and measures both horizontal deformations (as does the Wasserstein metric) and vertical deformations (as does the $L^2$ norm). Unlike the Wasserstein distance, it allows for signals that change signs. We will refer to the resulting geometry as the HV geometry throughout the paper.

Trouv\'e and  Younes \cite{TroYou05} provided some of the foundational results, and subsequent works generalizing the approach to a variety of distances and settings. However, there have been few works carefully studying the associated geometry of the signals and theoretically establishing its properties. This is in stark contrast with the abundance of developments of variants of optimal transportation. In the hope of fostering further development of this and related geometries, we provide a largely self-contained introduction in perhaps the simplest setting, namely one-dimensional signals. We then prove new results characterizing the resulting metric, providing various a priori estimates, and establishing the regularity and stability of minimizing geodesics. Furthermore, we introduce a simple and efficient scheme to compute the metric.

While this framework of~\cite{TroYou05} has been used in applications, primarily in image processing, we are unaware that it has been applied to the setting of one-dimensional signals. Here we argue that when considered in the space of signals (one-dimensional functions), it provides a viable metric that has desirable features and can be effectively computed. This opens the door for a variety of applications.
\medskip

\begin{figure}[h]
    \centering
    \includegraphics[width = 0.75\textwidth]{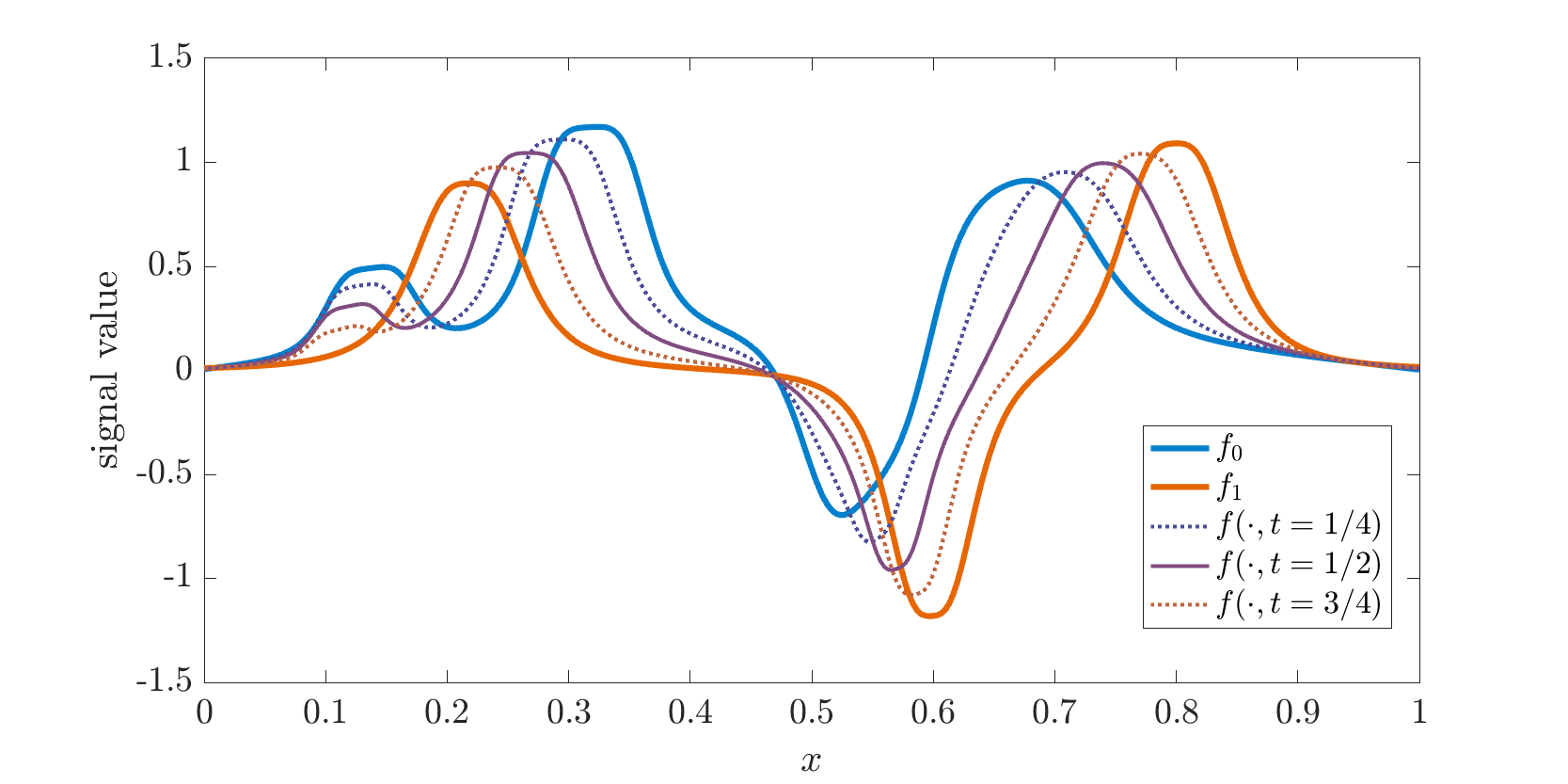}
    \caption{An example of the computed geodesic in the space of signals according to the HV geometry for $\kappa =0.1$, $\lambda=0.01$, and $\veps = 0.0005$. We note that for some features, horizontal transform dominates, while some parts are matched by vertically moving the signals.}
    \label{fig:c1}
\end{figure}

Given a finite interval, which, for simplicity, we set to be $[0,1]$, we consider the space of signals to be the $L^2$ functions. The paths on the space of signals are described by (weak) solutions of the transport equation with a source. In particular, a path connecting $f_0, f_1 \in L^2(0,1)$ is described by
\begin{align} \label{eq:path-intro}
\begin{split}
\partial_t f & = - \partial_x f \cdot v + z \quad \te{on } [0,1] \times [0,1],  \\
v(0, \tacka)& = v(1, \tacka)=0, \\
f(\tacka,0) & =f_0, \;\; f(\tacka,1)=f_1.
\end{split}
\end{align}
The action measuring the effort of deforming $f_0$ to $f_1$ along the particular path is 
\begin{equation} \label{eq:action1}
   A_{\kappa, \lambda, \veps}(f,v,z) =\frac12 \int_0^1 \int_0^1   \kappa v^2 + \lambda v_x^2  + \varepsilon v_{xx}^2 + z^2    \, \dd x \dd t
\end{equation}
where $\kappa >0$, $\lambda \geq 0$, and $\veps>0$. 

The $v^2$ term measures the horizontal movement of the signal, while the $z^2$ measures the vertical change. The parameter $\kappa$ is for modeling flexibility, as it accounts for horizontal and vertical variations differently. 
We note that the horizontal transport is modeled by the transport equation term, $\partial_x f \cdot v$, instead of the continuity equation $\divv (f v)$. Thus, even if $z \equiv 0$, the integral of $f$ is not preserved unless $\divv v \equiv 0$. The $\lambda v_x^2$ term measures how far from being conservative the transport is.
It is helpful to mention that in a higher dimensional analog of the metric $v_x$ term would be replaced by $\divv v$. 
In essence, it describes how much space needs to be created/removed to expand/contract the region where $f$ takes a certain value. The $\veps v_{xx}^2$ term is necessary for regularity. In particular, it ensures that very small regions cannot be inflated at an arbitrarily small cost, which would create an undesirable shortcut that could become the dominant transport mechanism. Indeed we discuss in Section \ref{sec:deg} that the geometry can degenerate if $\veps =0$. 

We define the distance between functions $f_0$ and $f_1$ as the infimum of the action among all admissible paths connecting them
\[ d_{HV( \kappa, \lambda, \veps)} := 
\inf_{(f,v,z)} \int_0^1 \sqrt{\frac12  \int_0^1   \kappa v^2 + \lambda v_x^2  + \varepsilon v_{xx}^2 + z^2    \, \dd x } \;  \dd t  \; = 
\inf_{(f,v,z)} \sqrt{ A_{\kappa, \lambda, \veps}(f,v,z)} . \]
The second equality is proved using reparameterization. When parameters $\kappa$, $\lambda$, and $\veps$ are clear in the context, we use the short-hand notation $d_{HV}$. See Figure~\ref{fig:c1} for an example of the computed geodesic between two signed signals based on the HV geometry.
\medskip

The metric we describe above is not new; modulo some minor technical details, it is the metric introduced by Miller and Younes \cite{MilYou01} and studied by Trouv\'e and  Younes  \cite{TroYou05m} and belongs to the family of metamorphoses studied by Trouv\'e, Younes, and  Holm  \cite{HoTrYo09}. 
It is also closely related to the metrics studied by Charlier, Charon, and Trouv\'e \cite{ChChTr16, ChChTr18}. 
In particular, \cite{TroYou05m} and \cite[Theorem 2]{ChChTr16} show the existence of geodesics, and  \cite[Property 1]{ChChTr18} shows the completeness of the metric. 
The works \cite{HoTrYo09, ChChTr16, ChChTr18} display very general frameworks and large families of possible deformation-based ways to compare functions on both fixed and changing sets and manifolds. 

Here we present a simple and self-contained description of the particular metric of interest in the space of signals.
Our problem description is primarily Eulerian instead of the approach based on groups of diffeomorphisms. The Eulerian point of view, allows us to consider paths between signals with less regularity. This allows us to prove that metrics that only penalize one derivative do not lead to a geodesic space, as minimizing geodesics may not exist; see Section \ref{sec:deg}.
We prove the existence of geodesics and the completeness of the resulting metric space of signals, which are analogous to those in \cite{TroYou05} and \cite{ChChTr18}, respectively.
Our approach allows us to characterize the convergence in the resulting space of signals as the $L^2$ convergence; see~Theorem \ref{conv_L2}. We are not aware of such a result in the other frameworks. In particular, knowing that the resulting metric is not weaker than the $L^2$ distance provides clarity about the distance that deformations induce. Furthermore, we prove a priori estimates on the minimizers, allowing us to exhibit that the minimizers preserve the regularity of the signals. In particular, Proposition \ref{prop:f_H1} shows the result for signals in $H^1$ and Remark  \ref{rem:fsmooth} shows it for smooth signals. 

Furthermore, we introduce a numerical algorithm, which iterates between optimizing the action over $v$ and $z$ with $f$ fixed and optimizing over $z$ and $f$ with $v$ fixed. Each of the subproblems amounts to minimizing a quadratic functional under a linear constraint (unlike the full problem where the constraint is nonlinear) and thus can be efficiently performed. 
We observe that the algorithm quickly (within a few iterations) converges to a local minimum of the action. 
By iterating between solving these two sub-problems, we can show that the action functional~\eqref{eq:action1} monotonically decreases on the continuous level. We employ a Lagrangian approach when optimizing over $z$ and $f$ with $v$ fixed, which involves numerical integration and numerical interpolation. When optimizing the action over $v$ and $z$ with $f$ fixed, we solve a fourth-order partial differential equation (PDE) through the finite-difference method. 
Since the geodesic obtained depends on the initial data, we also propose a  velocity initialization for which matches the $k$ most prominent peaks of each signal. To mitigate the issue of only finding a local minimizer, we optimize over $k\in \{0,1, \dots, k_{max} \}$ to select the initialization that leads to the smallest action value, where $k_{max}$ is the number we pick, denoting the maximum number of prominent peaks we aim to match. In particular, $k=0$ corresponds to zero-velocity initialization.

\subsection{Literature review.} 
Designing and choosing the right way to compare the functions considered are important in a number of contexts. 
In inverse problems, such as denoising, deblurring, or wave-form inversion, choosing an appropriate way to measure loss is crucial to the outcome. If the signals represent the distribution of mass where the location of the mass is more important than the density, optimal transportation metrics perform especially well \cite{EnFrYa16,engquist2022optimal}. In the setting of optimal transportation with quadratic cost, the equation in~\eqref{eq:path-intro} is replaced by the continuity equation:
\[ \partial_t f = - \divv(fv) \]
and the action is 
\[ A_W (f,v) = \int_0^1 \int_0^1 v^2 f \, \dd x \dd t.\]
The optimal transportation metrics have the requirement that the functions compared are nonnegative with the same total mass. The mass requirement was relaxed by the unbalanced optimal transport \cite{Liero_2017,unbalanced_1}, which allows for the change of the mass, but still requires non-negativity. For unbalanced transport, the continuity equation can have a multiplicative source term, and the action also penalizes signal amplification:
\begin{align*}
 \partial_t f & = - \divv(fv) + zf, \\
\te{and } \;\;  A_{UB} (f,v) & = \int_0^1 \int_0^1 (v^2 + z^2) f \, \dd x \dd t.
\end{align*}


There is a need to compare general signals with both positive and negative values in many applications, such as full-waveform inversion~\cite{engquist2022optimal}. It poses a limitation for applications of optimal transportation. Several ideas for 
 adapting optimal transportation approaches to signed signals have been introduced~\cite{engquist2022optimal}. They include adding a constant, 
exponentiating the signal, comparing positive and negative parts separately, and considering the transportation between the graphs of functions on the product space (e.g., $TL^p$ metric \cite{TPKRS17}). Comparing the signals on graphs has been shown as effective in full-waveform inversion and related problems \cite{MBMO19}.
However, all of these approaches lack one of the key elements, namely the Riemannian nature that allows for interpolation or the desired invariance  $d(f_0, f_1) = d(-f_0, -f_1)$.
\medskip

Geometric ways for comparing shapes and submanifolds of $\R^d$ have received a lot of attention in computer vision, computational anatomy and geometry processing. Riemannian geometries on the space of curves have been much studied. Mumford and Michor~\cite{MicMum06} showed that if only the $L^2$ norm of the velocity is considered in the action, then the metric degenerates.
 Bruveris, Michor and Mumford \cite{BrMiMu14, Bruveris15} 
 and Nardi, Peyre, and Vialard \cite{NaPeVi16} 
 established that the space of immersed plane curves is geodesically complete when the Sobolev or, respectively, $ BV$-based metric involves the $L^2$ norm of two or more derivatives. 
Early works on computational anatomy and nonrigid registration have led to studies of geometries of the shapes of surfaces, as well as higher dimensional manifolds, many of which are surveyed in the book by Younes~\cite{Younes10}. Recent results~\cite{BaBrHaMi22, BaHaMi20} established the local well-posedness of geodesic equations when the metrics penalize at least one derivative of the deformation field. 

A different line of work investigates the spaces of shapes considered as interiors of sets. In some sense, this takes into account the mass contained in the set, as does the Wasserstein distance. Liu, Pego and one of the authors, \cite{LPS19}, showed that restricting the Wasserstein geometry to the space of characteristic functions is not viable, as the geodesics do not exist. In fact, minimizing curves converge weakly to the Wasserstein geodesics in the unconstrained space. 
Wirth,  Bar, Rumpf, and Sapiro~\cite{WBRS11} considered actions that penalize the gradient of the velocity field and showed that, in this case, the geometry is viable. The framework was refined by Rumpf and Wirth~\cite{RumWir13,RumWir15}, who also provided a numerical method for finding geodesics.

In parallel with the studies of the geometry of the spaces of curves and shapes, researchers considered the metrics which allow for comparing signals, such as gray-level images, while allowing for both deformations of the domain and intensity variations. Trouv\'e \cite{Trouve95} introduced the basic description based on group actions. This was refined into the description of deformable templates \cite{TroYou05} and metamorphoses \cite{TroYou05m} by Trouv\'e and Younes. As we indicated at the beginning, the geometry we study belongs to this family. Eulerian description of metamorphosis was carried out in \cite{HoTrYo09}. 
Charlier, Charon, and Trouv\'e \cite{ChChTr16,ChChTr18} considered spaces where one compares manifolds together with functions on them. The authors of \cite{ChChTr16,ChChTr18} showed that under sufficient regularity assumptions, in the space of manifold, function pairs are geodesically complete. 
Many of the numerical approaches (e.g. \cite{MiTrYo06,FGG21num}) to computing the minimizing geodesics between the given images relied on the so called shooting methods where one iterates computing the  forward geodesic for the given initial conditions and adjusts the initial conditions. 
Berkels, Effland, and Rumpf \cite{DeEfRu15num} took a different approach and gave a variational formulation based on discrete-in-time paths where one minimizes the deformation between consecutive images in a way that is consistent with the action integrated in time. This is a promising and well founded approach. One difference to our approach is that their sub-problems remain nonconvex. 

\subsection{Outline}
The rest of the paper is organized as follows. In Section~\ref{sec:main}, we define the HV geometry and the associated distance and establish their properties. In particular, we present the scaling invariances of the metric in Proposition \ref{prop:properties}. In Section \ref{sec:tangent}, we rigorously identify the tangent space. The identification is analogous but slightly different from the one \cite{TroYou05} in that instead of equivalence classes, we identify the representatives that achieve the minimal length (just as the gradient vector fields minimize the action for Wasserstein geometry). In Section \ref{subsec:exist}, we provide a number of a priori estimates, prove the existence of geodesics (in a slightly different way from that in \cite{TroYou05}), and obtain representation formulas satisfied by geodesics. In Section \ref{sec:complete}, we prove the completeness of the $d_{HV}$ metric on $L^2(0,1)$ and that the topology induced by $d_{HV}$ on $L^2(0,1)$ is the same as the one induced by the $L^2$ norm. 
Moreover, we show in Proposition \ref{prop:H1Vdeg} that without involving the second-order derivative of the velocity, action minimizers may not exist.
In Section \ref{sec:further_properties}, we further study several properties of the geodesics. In Proposition \ref{prop:f_H1}, we show that if the starting and the ending signals are in $H^1$, then the signals along the minimizing geodesic remain in $H^1$. Furthermore, Remark \ref{rem:fsmooth} indicates that if the signals are smooth, so is the geodesic connecting them. Proposition \ref{prop:stability_geod}  establishes the $L^2$ stability of minimizing geodesics with respect to perturbations of the endpoints. We note that this is not a straightforward compactness result since, initially, one only has control over the $L^2$ norms of the signals. Section 
\ref{sec:EL} is devoted to establishing first-order optimality conditions of the action minimizers. We first obtain these conditions for general $L^2$ signals. If the signals are further in $H^1$, we show that the Euler--Lagrange equations have the form of differential equations. 
In Section \ref{sec:scheme}, we introduce our numerical method. We first present the iterative minimization scheme that uses two sub-problems which are based on Euler--Lagrange equations and representation formulas for the geodesics. The discretization we use based on finite difference is included in Section \ref{sec:num-discrete}. We describe our approach to the path initialization in Section \ref{subsec:init}. As the energy often has local minimizers, it is important to use initialization informed by the signals. In Section~\ref{sec:experiments}, we display several illustrative examples that highlight the properties of the $HV$ signal geometry,  as well as the numerical approach we take. We also  apply our scheme to signals from the ECG datasets and seismology. Finally we discuss the parameter selection in Section \ref{subsec:para}.


\section{HV Geometry and its properties.} \label{sec:main}

To rigorously define the HV distance we start by precisely defining the set of admissible paths.
Let $\mathcal V:= L^2((0,1),H^2(0,1) \cap H^1_0(0,1))$. Given $f_0, f_1 \in L^2(0,1)$ we define the set of admissible paths to be  
\begin{align} \label{eq:A}
\begin{split}
\mathcal A(f_0, f_1):= \big\{(f,v,z)\:|\: &f \in L^{2}((0,1),L^2(0,1)) \cap C([0,1],H^{-1}(0,1)),\\
&v\in \mathcal V, \; z\in L^2((0,1),L^2(0,1)),\\
&\partial_t f+\partial_x f\cdot v=z \text{ weakly, }\:f(\cdot,0)=f_0, f(\cdot,1)=f_1
\big\}.
\end{split}
\end{align}
By weak solutions of $\partial_t f+ \partial_x f\cdot v=z$ above we mean that 
 for any test function $\phi\in C^{\infty}([0,1]^2)$,
 
\begin{align}  \label{eq:weaksol}
\begin{split}
- \int_0^1\int_0^1f\partial_t\phi & \dd x\,\dd t+\int_0^1 f_{1}(x)\phi(x,1)\,\dd x-\int_0^1 f_{0}(x)\phi(x,0)\,\dd x\\
& =\int_0^1\int_0^1  f\phi\partial_x v\,\dd x\,\dd t
+\int_0^1 \int_0^1f\partial_x \phi v\,\dd x\,\dd t
+\int_0^1 \int_0^1\phi z\,\dd x\,\dd t.
\end{split}
\end{align}
The $HV$ distance is then defined by
\begin{equation} \label{eq:dHV}
d_{HV( \kappa, \lambda, \veps)} := 
\inf_{(f,v,z) \in \mathcal A(f_0,f_1)} \int_0^1 \sqrt{\frac12  \int_0^1   \kappa v^2 + \lambda v_x^2  + \varepsilon v_{xx}^2 + z^2    \, \dd x } \;  \dd t 
\end{equation}

The definition of the distance implies the following simple, but useful properties.

\begin{proposition} \label{prop:properties}
Consider $f_0, f_1 \in L^2(0,1)$. Let $c>0$. Then
\begin{listi}
\item $d_{HV}(f_0, f_1) \leq \| f_0  - f_1 \|_{L^2}$.
\item $\displaystyle{d_{HV}(-f_0, -f_1) =  d_{HV}(f_0, f_1)}$
\item $\displaystyle{d_{HV}(f_0+c, f_1+c)  = d_{HV}(f_0, f_1)}$
\item $\displaystyle{d_{HV( c^2 \kappa, c^2\lambda, c^2\veps)}(c f_0, c f_1)   = c d_{HV( \kappa,\lambda, \veps)}(f_0, f_1) }$
\end{listi}
To indicate the behavior of the action with respect to rescaling the space extend $f_0$ and $f_1$ periodically to $\R$. Likewise,  given a path $(f,v,z)$ consider it extended periodically to $\R$. Then for $L \in \N$
\begin{listi}
\addtocounter{broj}{4}
\item  $\displaystyle{A_{ L^2\kappa, \lambda, \veps/L^2}(f(L\tacka,\tacka),v(L\tacka,\tacka),z(L\tacka,\tacka)) =  A_{\kappa,\lambda, \veps}(f,v,z)},\;$
where the action is considered only on $[0,1]$, as usual.
\end{listi}
\end{proposition}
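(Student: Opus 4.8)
The plan is to derive all five identities from a single structural observation: both the admissible set $\mathcal A(f_0,f_1)$ of \eqref{eq:A} and the action $A_{\kappa,\lambda,\veps}$ of \eqref{eq:action1} transform in an elementary way under affine maps in $f$, under multiplication of $f$ and $z$ by a constant, and under dilations of the $x$-variable, so for each item one just writes down an explicit bijection between the relevant sets of triples and tracks how the action — or, for (i)--(iv), the length functional in \eqref{eq:dHV} — behaves. For (i) specifically I would test \eqref{eq:dHV} with the affine-in-time path $f(x,t)=(1-t)f_0(x)+tf_1(x)$, $v\equiv 0$, $z\equiv f_1-f_0$: this triple is admissible (it is affine in $t$ with $L^2$ values, hence $f\in C([0,1],H^{-1})$, and with $v=0$ the weak identity \eqref{eq:weaksol} reduces, after an integration by parts in $t$, to $\int\!\int\phi\,\partial_t f=\int\!\int\phi z$, which holds since $\partial_t f=f_1-f_0$), and it gives $d_{HV}(f_0,f_1)\le\int_0^1\sqrt{\tfrac12\|f_1-f_0\|_{L^2}^2}\,\dd t=\tfrac1{\sqrt2}\|f_0-f_1\|_{L^2}\le\|f_0-f_1\|_{L^2}$.

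For (ii)--(iv) the bijections are $(f,v,z)\mapsto(-f,v,-z)$, $(f,v,z)\mapsto(f+c,v,z)$, and $(f,v,z)\mapsto(cf,v,cz)$, carrying $\mathcal A(f_0,f_1)$ onto $\mathcal A(-f_0,-f_1)$, $\mathcal A(f_0+c,f_1+c)$, $\mathcal A(cf_0,cf_1)$ respectively. The weak transport identity \eqref{eq:weaksol} is preserved in each case: it is linear in the pair $(f,z)$, which disposes of (ii) and (iv) at once, while for (iii) the terms in \eqref{eq:weaksol} produced by the added constant are $-c\int_0^1\!\int_0^1\partial_t\phi+c\int_0^1\phi(\tacka,1)-c\int_0^1\phi(\tacka,0)$ on the left and $c\int_0^1\!\int_0^1\partial_x(\phi v)$ on the right, and both vanish — the first by the fundamental theorem of calculus in $t$, the second because $v(0,\tacka)=v(1,\tacka)=0$, i.e. $v\in H^1_0(0,1)$ in $x$. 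Since $A_{\kappa,\lambda,\veps}$ does not see $f$ at all, it is literally unchanged by the first two maps; under the third, with the three parameters simultaneously multiplied by $c^2$, each of the four integrands acquires exactly a factor $c^2$, so $A_{c^2\kappa,c^2\lambda,c^2\veps}(cf,v,cz)=c^2A_{\kappa,\lambda,\veps}(f,v,z)$. Taking the square root of the $t$-integrand in \eqref{eq:dHV} (using $c>0$) and then passing to the infimum along these bijections yields (ii)--(iv).

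For (v) I would extend $(f,v,z)$ $1$-periodically in $x$ and set $\tilde f(x,t)=f(Lx,t)$, $\tilde z(x,t)=z(Lx,t)$, and $\tilde v(x,t)=\tfrac1L v(Lx,t)$ — the $\tfrac1L$ in the velocity being exactly what makes the transport equation, and hence the periodic extension of the endpoints, survive the dilation. Then $\partial_x\tilde v=v_x(L\tacka,\tacka)$ and $\partial_{xx}\tilde v=L\,v_{xx}(L\tacka,\tacka)$, so with parameters $(L^2\kappa,\lambda,\veps/L^2)$ the four integrands of $A$ at $(\tilde f,\tilde v,\tilde z)$ collapse to $\kappa\,v(L\tacka)^2$, $\lambda\,v_x(L\tacka)^2$, $\veps\,v_{xx}(L\tacka)^2$, $z(L\tacka)^2$; since for each fixed $t$ the functions $v^2,v_x^2,v_{xx}^2,z^2$ are $1$-periodic and $L\in\N$, the substitution $y=Lx$ gives $\int_0^1 g(Lx)\,\dd x=\int_0^1 g(y)\,\dd y$, and the action over $[0,1]^2$ is exactly $A_{\kappa,\lambda,\veps}(f,v,z)$. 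The computations are all elementary; the only real friction I expect is (a) the weak-formulation bookkeeping in (ii)--(iv), where one must carry the $x$- and $t$-boundary terms from integration by parts and invoke $v\in H^1_0(0,1)$ for (iii), and (b) making the powers of $c$ in (iv) and of $L$ in (v) balance between the integrand and the rescaled parameters — it is this balancing that forces the $\tfrac1L$ normalization of the velocity in (v).
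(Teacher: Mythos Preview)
Your approach is exactly the paper's: it states only that (i) follows from computing the action of the linear interpolation and that (ii)--(v) follow directly from the definition of the action. Your explicit bijections and the weak-formulation bookkeeping for (ii)--(iv) are correct, and for (i) you in fact obtain the sharper constant $\tfrac1{\sqrt2}$ coming from the $\tfrac12$ in \eqref{eq:action1}.

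One point worth flagging on (v): you set $\tilde v(x,t)=\tfrac1L\,v(Lx,t)$, whereas the displayed identity in the Proposition writes the second argument as $v(L\,\cdot\,,\,\cdot\,)$ with no $\tfrac1L$. Your normalization is the correct one --- it is exactly what is needed both for the transport constraint $\partial_t\tilde f+\tilde v\,\partial_x\tilde f=\tilde z$ to survive the dilation and for the four integrands to balance against the parameters $(L^2\kappa,\lambda,\veps/L^2)$. With the literal $v(L\,\cdot\,)$ one has $\partial_x\bigl(v(L\,\cdot\,)\bigr)=L\,v_x(L\,\cdot\,)$ and $\partial_{xx}\bigl(v(L\,\cdot\,)\bigr)=L^2\,v_{xx}(L\,\cdot\,)$, and after the periodic change of variable each of the three $v$-terms picks up an extra factor $L^2$, so the stated equality fails. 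In other words, you have tacitly corrected a misprint in the statement and proved the intended scaling relation.
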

Property (i) is proved by computing the action of the linear interpolation while the remaining properties are proved by a straightforward application of the definition of the action. 


\subsection{Degeneracy without the second derivatives} \label{sec:deg}

Before proving rigorous results about $d_{V}$, we show that is we
 $\varepsilon = 0$ the geometry of signals would not have all of the desirable properties. In particular the show that  there exist functions $f_0$ and $f_1$ such that there does not exist any minimizers for the action. 
 
 To be more precise, consider the set of admissible paths to be as in \eqref{eq:A}, but with $\mathcal V$ replaced by $ L^2((0,1), H^1_0(0,1))$. 
 Consider the action 
 \begin{equation} \label{eq:ac1}
 A(f, v, z) = \int_0^1 \int_0^1 v^2 + v_x^2 + z^2 d x d t. 
\end{equation}
 
 We start by noting that linear interpolation is not  optimal when $f_0$ and $f_1$ are constant functions that differ sufficiently. 
\begin{lemma} \label{lem:aux1}
If $\varepsilon = 0$, then for all $\lambda \in [0, \infty)$ there exists $H \in \mathbb{R}$ such that the linear interpolation path between $f_0 \equiv 0$ and $f_1 \equiv H$ is not optimal. 
\end{lemma}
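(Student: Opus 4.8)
The plan is to beat the linear interpolation by an explicit competitor. The linear interpolation from $f_0\equiv 0$ to $f_1\equiv H$ uses $v\equiv 0$ and $z\equiv H$, so its action is $\int_0^1\!\int_0^1 H^2\,\dd x\,\dd t=H^2$, independently of $\lambda$. I want an admissible path whose action stays strictly below $H^2$ once $H$ is large. The guiding principle is that the vertical mechanism is costly — raising a signal whose support has unit measure by an amount $H$ unavoidably costs $\gtrsim H^2$ through the $z^2$ term — while the horizontal mechanism is blind to the amplitude, since a velocity field transports the level sets of $f$ at a cost that does not involve $H$. Hence one should first create a \emph{thin} full-height spike of value $H$ very cheaply, then \emph{inflate it by pure transport} at a cost independent of $H$, and only afterwards fill in the small remaining gaps, arranging that the total vertical expenditure is a fixed fraction of $H^2$ strictly smaller than $1$.

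Concretely, I would split $[0,1]$ into three time stages. On $[0,\tfrac13]$: set $v\equiv 0$ and $z=3H\,\mathbf{1}_{[a,b]}$, where $[a,b]\ni\tfrac12$ has length $2\delta$; then $\partial_t f=z$ drives $f$ from $0$ to the box $H\,\mathbf{1}_{[a,b]}$ at cost $\tfrac13(3H)^2(2\delta)=6\delta H^2$. On $[\tfrac13,\tfrac23]$: set $z\equiv 0$ and let $v$ equal a fixed smooth autonomous vector field $w$ on $[0,1]$ with $w(0)=w(1)=0$ (for instance a positive multiple of $-\sin(2\pi x)$), chosen together with $[a,b]$ so that the time-$\tfrac13$ flow $\Phi$ of $w$ fixes $0$ and $1$ and maps $[a,b]$ onto $[\eta,1-\eta]$ for a fixed $\eta$ with $\delta+\eta<\tfrac16$; the transported box $f(\cdot,t)=H\,\mathbf{1}_{\Phi_{t-1/3}([a,b])}$ is the weak solution of $\partial_t f+\partial_x f\cdot v=0$, and its action is $\tfrac13\bigl(\|w\|_{L^2}^2+\lambda\|w_x\|_{L^2}^2\bigr)=:C(\lambda)<\infty$, which does not depend on $H$. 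On $[\tfrac23,1]$: set $v\equiv 0$ and $z=3H\,\mathbf{1}_{[0,\eta]\cup[1-\eta,1]}$, raising the two gaps to height $H$ so that $f(\cdot,1)\equiv H=f_1$, at cost $6\eta H^2$.

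Next I would verify that $(f,v,z)$ is admissible in the sense of \eqref{eq:A} with $\mathcal V$ replaced by $L^2((0,1),H^1_0(0,1))$: $f\in L^2((0,1),L^2(0,1))$ and is continuous into $H^{-1}(0,1)$ (within each stage $t\mapsto f(\cdot,t)$ is in fact continuous in $L^2$, and the two interface configurations agree), $v$ is smooth in space and in $L^2$ in time, $z\in L^2$, the endpoint conditions hold, and the weak equation \eqref{eq:weaksol} holds on each stage (on the first and third because $v\equiv 0$, on the middle one by the classical change of variables along characteristics) and hence on $[0,1]$ since the interface boundary terms telescope. Summing the three contributions, the total action is at most $6(\delta+\eta)H^2+C(\lambda)$; since $6(\delta+\eta)<1$, this is strictly less than $H^2$ whenever $H^2>C(\lambda)/\bigl(1-6(\delta+\eta)\bigr)$. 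Any such $H$ shows that the linear interpolation is not optimal, and since only this threshold on $H$ depends on $\lambda$, the statement holds for every $\lambda\in[0,\infty)$.

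The one step that is not mere bookkeeping is the verification, in the middle stage, that the transported box is a genuine weak solution of the non-conservative transport equation and that its horizontal action is finite and $H$-independent; I expect this to be the main (and essentially only) obstacle. It is handled by requiring $w$ to be smooth, so that $\Phi$ is a family of diffeomorphisms fixing the endpoints and the change of variables along characteristics underlying \eqref{eq:weaksol} is classical — the box simply rides on characteristics and keeps its height $H$. Everything else follows by substituting the explicitly defined $f$, $v$, and $z$ and computing.
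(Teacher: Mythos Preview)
Your proposal is correct and follows essentially the same three-stage strategy as the paper: (i) lift a small-measure set to height $H$ vertically, (ii) inflate that plateau by pure transport at a cost independent of $H$, (iii) fill in the remaining small gaps vertically. The only differences are cosmetic: the paper builds a one-sided step $H\mathbf{1}_{[0,s]}$ and slides its single edge across using a \emph{piecewise linear} velocity field (which is the natural minimal-regularity choice in $H^1_0$ and yields the explicit bound $3(1-2s)^2(1+\lambda/s^2)$ for the middle stage), whereas you build a centered box and expand it symmetrically with a \emph{smooth} autonomous field such as a multiple of $-\sin(2\pi x)$. Your choice makes the verification of the weak transport equation in the middle stage slightly cleaner (classical flow of a smooth field), at the price of a non-explicit constant $C(\lambda)$; the paper's choice gives an explicit constant but relies on the fact that piecewise linear velocities lie in $H^1_0$. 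Either way the arithmetic is the same: total action $\le 6(\delta+\eta)H^2 + C(\lambda)$ with $6(\delta+\eta)<1$, beaten by $H^2$ for $H$ large.
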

\begin{proof}
Consider $f_0 \equiv 0$ and $f_1 \equiv H$ for some $H >0$. One possible path between $f_0$ and $f_1$ is by $L^2$ interpolation. This path is defined by $v \equiv 0$ and $z \equiv H$. Then 
\[ A(f, v, z) = \int_0^1 \int_0^1 z^2 d x d t = H^2 \]

We construct a competitor with  nonzero velocity $v$. In the time interval $t \in [0, 1/3]$, use an $L^2$ interpolation between $f_0$ and 
\[ f_2 (x) = \begin{cases}
H & x \leq s \\
0 & x > s
\end{cases} \]
for fixed  $s \in (0, 1/2)$. In this time interval, $v \equiv 0$, and $\int_0^{\frac{1}{3}} \int_{0}^s z^2 d x d t = 3 s H^2.$
On the time interval $t \in [1/3, 2/3]$, the signal moves from $f_2$ to 
\[ f_3 (x) = \begin{cases}
H & x \leq 1 - s \\
0 & x > 1 - s
\end{cases} 
\]
by moving the edge of the step function with a constant speed $3 (1 - 2 s)$. 
The velocity parameterized by the initial position is $w(x) = 3(1-2s) \frac{x}{s}$ for $x \in (0,s]$ and $w(x) = 3(1-2s) \frac{1-x}{1-s}$ for $x \in (s,1)$. 
Then for all $t \in [1/3, 2/3]$, the velocity is $v\left(x+ w(x)\left( t - \frac13 \right),t \right) = w(x)$. 
By chain rule $0 \leq v \leq 3 (1 - 2 s)$ and $|v_x| \leq \frac{ 3 (1 - 2 s) }{s}$. So 
\[ \int_{\frac{1}{3}}^{\frac{2}{3}} \int_{0}^{1} v^2 + \lambda v_x^2 d x d t \leq 3 (1 - 2s)^2 \left( 1 + \frac{\lambda}{s^2} \right) \]

The interpolation between $f_2$ and $f_3$ is that the signal is constant along the trajectories. 
Finally use $L^2$ interpolation between $f_3$ and $f_1$. This is symmetric to the time interval $[0, 1/3]$ and adds $3 s H^2$ to the action. 

Thus the total action for this path is bounded above by 
$ 6 s H^2 + 3 (1 - 2s)^2 \left( 1 + \frac{\lambda}{s^2} \right).$
By picking $s < \frac{1}{6}$, and then $H>0$ such that 
$H^2 > \frac{3}{1 - 6s} (1 - 2s)^2 \left( 1 + \frac{\lambda}{s^2} \right)$,
we have that 
$ H^2 > 6 s H^2 + 3 (1 - 2s)^2 \left( 1 + \frac{\lambda}{s^2} \right)$. Therefore
 $L^2$ interpolation is not the optimal path between $f_0 \equiv 0$ and $f_1 \equiv H$. 
\end{proof}

\begin{proposition} \label{prop:H1Vdeg}
There exists $H >0 $ such that there is no  path between $f_0 \equiv 0$ and $f_1 \equiv H$ minimizing the action \eqref{eq:ac1} . 
\end{proposition}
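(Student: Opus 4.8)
The plan is to prove that the infimum $m := \inf\{A(f,v,z): (f,v,z) \in \mathcal A(0,H)\}$ — over the admissible class of this subsection, with $\mathcal V$ replaced by $L^2((0,1),H^1_0(0,1))$ — is \emph{not attained}, by exhibiting a minimizing sequence that loses compactness through finer and finer spatial oscillation (it stays bounded in $L^2$ but fails to converge strongly) and along which the action strictly decreases; this is the mechanism behind \cite{MicMum06} and \cite{LPS19}. As a preliminary, note that modifying the construction in the proof of Lemma \ref{lem:aux1} — letting the moving edge accelerate into the boundary instead of being filled in by an $L^2$ move — shows that $m$ is bounded above by a constant independent of $H$; in particular $m < H^2$ once $H$ is large, so linear interpolation is not a minimizer.

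\emph{A minimizing sequence of ``combs''.} For $k \in \mathbb N$ put $p_i = (2i-1)/(2k)$, $i = 1, \dots, k$, and
\[ f_k(x,t) = H\,\mathbf{1}_{\bigcup_{i=1}^k (p_i - s(t),\, p_i + s(t))}, \qquad s(0) = 0, \quad s(1) = \tfrac{1}{2k}, \]
so $f_k(\cdot,0) = 0$ and $f_k(\cdot,1) = H$. The points $\{p_i\} \cup \{j/k : 0 \le j \le k\}$ cut $(0,1)$ into $2k$ intervals of length $1/(2k)$, each containing exactly one ``front'' $p_i \pm s(t)$; on each such interval let $v_k(\cdot,t)$ be the minimizer of $\int (v^2 + v_x^2)\,\dd x$ vanishing at the endpoints and equal to $\mp\dot s(t)$ at the front. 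A direct computation with the resulting $\sinh$-profiles gives $\int_0^1 (v_k^2 + v_{k,x}^2)\,\dd x = 2k\,\dot s(t)^2[\coth s(t) + \coth(\tfrac{1}{2k} - s(t))]$, and one verifies — exactly as in the computation following \eqref{eq:weaksol} — that $(f_k, v_k, 0) \in \mathcal A(0,H)$. Optimizing over $s(\cdot)$ by Cauchy–Schwarz yields
\[ A(f_k, v_k, 0) = E_k := 2k\left(\int_0^{1/(2k)} \sqrt{\coth u + \coth(\tfrac{1}{2k} - u)}\;\dd u\right)^{2}. \]
Using $\coth x = x^{-1} + x/3 + \cdots$ together with $\int_0^1 (w(1-w))^{-1/2}\,\dd w = \pi$, one computes $E_k = \pi^2 + O(k^{-2})$ with $E_k > \pi^2$ for every $k$; hence $(E_k)_k$ decreases to $\pi^2$, so $m \le \pi^2$ (independent of $H$) and no comb attains it.

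\emph{Excluding any minimizer.} The $f_k$ are bounded in $L^\infty((0,1);L^2(0,1))$ but $f_k(\cdot,t) \rightharpoonup H\rho(t)$ only weakly, with $\rho(t) = 2k\,s(t)$ the spatial mean; the weak limit is not an admissible path, since $\partial_t(H\rho) = 0$ would force $\rho$ constant, while the nearest genuine path $t \mapsto H\rho(t)$ has action $\int_0^1 H^2 \rho'^2\,\dd t \ge H^2 > \pi^2 \ge m$. Thus the action drops in the limit and the infimum escapes $\mathcal A(0,H)$. To promote this to non-attainment one shows that $m$ equals the value of the relaxed problem in which such oscillatory (Young-measure) limits are allowed, that the relaxed value is realized only by configurations carrying nontrivial oscillation, and that no such configuration is a path in $\mathcal A(0,H)$ — whose elements have $f(\cdot,t)$ strongly continuous into $H^{-1}$ and $v(\cdot,t) \in H^1_0$. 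Hence $A(P) > m$ for every $P \in \mathcal A(0,H)$. (Heuristically the same point is seen by rescaling: if a minimizer existed for every $H$, then since the source term carries action $O(1)$ uniformly one has $z/H \to 0$, so $f/H$ would tend as $H \to \infty$ to a path from $0$ to the constant $1$ driven by pure transport — impossible, as transport preserves the distribution of values of $f$ and $f(\cdot,0) = 0$.)

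The main obstacle is the last step: establishing the sharp lower bound $A(f,v,z) \ge \pi^2$ for \emph{all} admissible $(f,v,z)$, not merely combs, and that equality never occurs. This is precisely where one must control the product $\partial_x f \cdot v$ under weak convergence; the comb construction supplies only the matching upper bound and the reason compactness fails.
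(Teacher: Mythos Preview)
Your proposal has a genuine gap, and you name it yourself in the last paragraph: you never establish the lower bound $A(f,v,z)\ge \pi^2$ for an \emph{arbitrary} admissible path, only for the combs. Exhibiting one minimizing sequence that loses compactness does not rule out the existence of a different path that attains the infimum; for that you need either a sharp matching lower bound or a direct argument that every path can be strictly improved. The Young-measure/relaxation step you sketch is asserted rather than carried out, and it is exactly where the difficulty lies.

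The paper's proof bypasses this entirely with a two-line improvement argument. Given any admissible $(f,v,z)$, split into cases. If $v\equiv 0$ then the path is the linear interpolation, which Lemma~\ref{lem:aux1} already shows is not optimal for the chosen $H$. If $v\not\equiv 0$, construct a competitor by spatial period-doubling: $\tilde f(x,t)=f(2x,t)$, $\tilde v(x,t)=\tfrac12 v(2x,t)$, $\tilde z(x,t)=z(2x,t)$ on $[0,\tfrac12]$, and the same with $2x-1$ on $[\tfrac12,1]$. Then $\tilde v_x(x,t)=v_x(2x,t)$ and a change of variables gives
\[
A(\tilde f,\tilde v,\tilde z)=\int_0^1\!\!\int_0^1 \tfrac14 v^2+v_x^2+z^2\,\dd x\,\dd t \;<\; A(f,v,z),
\]
the inequality being strict because $v\not\equiv 0$. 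Hence every path admits a strict competitor and no minimizer exists. Your comb sequence is precisely what one obtains by iterating this doubling from the construction in Lemma~\ref{lem:aux1}; but there is no need to compute its limiting action, and no comparison against an arbitrary path is required.
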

\begin{proof}
Assume $H$ satisfies the condition of Lemma \ref{lem:aux1}.
Consider any path $(f, v, z)$ from $f_0$ to $f_1$. 
We split the argument into two cases depending on whether $v \equiv 0$. 

\emph{Case $1^o$} If $v \equiv 0$. 
Then  $ f_t = z$ for all $x,t$, that is the path is linear interpolation. 
Hence, be Lemma \ref{lem:aux1} the path does not minimize the action. 

\emph{Case $2^o$} $v \not \equiv 0$. 
From the path $(f, v, z)$ we can construct a new path $(\tilde f, \tilde v, \tilde z)$ by creating two copies of $f$ shrank to interval $\frac12$. 
\begin{align*}
    \tilde f (x, t) &= \begin{cases}
    f(2x, t) & x \leq \frac{1}{2} \\
    f(2x-1, t) & x \geq \frac{1}{2} 
    \end{cases} \\
    \tilde v (x, t) &= \begin{cases}
    \frac 12 v(2x, t) & x \leq \frac{1}{2} \\
    \frac 12 v(2x-1, t) & x \geq \frac{1}{2} 
    \end{cases} \\
    \tilde z (x, t) &= \begin{cases}
    z(2x, t) & x \leq \frac{1}{2} \\
    z(2x-1, t) & x \geq \frac{1}{2}.
    \end{cases} \\
\end{align*}
A consequence of this is that 
\[ \tilde v_x (x, t) = \begin{cases}
    v_x (2x, t) & x \leq \frac{1}{2} \\
    v_x (2x-1, t) & x \geq \frac{1}{2}.
    \end{cases} \]
For the action of this path, 
\begin{align*}
    A(\tilde f, \tilde v, \tilde z) 
    &= \int_{0}^{1} \int_0^{\frac{1}{2}} \tilde v^2 + \tilde v_x^2 + \tilde z^2 d x d t + \int_{0}^{1} \int_{\frac{1}{2}}^1 \tilde v^2 + \tilde v_x^2 + \tilde z^2 d x d t \\
    &= \int_{0}^{1} \int_0^{\frac{1}{2}} \frac{1}{4} v(2x, t)^2 + \tilde v_x(2x, t)^2 + \tilde z(2x, t)^2 d x d t \\
    &\quad+ \int_{0}^{1} \int_{\frac{1}{2}}^1 \frac{1}{4} \tilde v(2x - 1, t)^2 + \tilde v_x(2x - 1, t)^2 + \tilde z(2x - 1, t)^2 d x d t \\
    &= \int \int \frac{1}{4} v^2 + v_x^2 + z^2 d x.
\end{align*}
Since $v$ is non-zero, this is strictly less than the action $A(f, v, z)$. Thus the path $(f, v, z)$ is not minimal. 

Therefore  so no path between $f_0$ and $f_1$ minimizes the action. 
\end{proof}

\subsection{Identification of the tangent space.} \label{sec:tangent}

We note that $d_{HV}$, defined in \eqref{eq:dHV}, can be seen as the length of a curve in the space of signals. Indeed, as we show below, $(L^2(0,1), d_{HV})$ is geodesic space. 
From the definition of the admissible curves and the action we see that in Eulerian description of the tangent space
\begin{equation}
    \Tan_{f,Euler}= \{ -v f_x  + z \::\: v \in H^2(0,1) \cap H^1_0(0,1), \;\; z \in L^2(0,1) \}\; \subseteq H^{-1} 
\end{equation}
where $v f_x $ is the element of $H^{-1}$ defined by $\langle -vf_x, \phi \rangle = 
\int_0^1 v_x f \phi + vf \phi_x \dd x$ for all $\phi \in H^1_0$. 
We show below that in the Lagrangian description the tangent space can be identified with a subspace of $(H^2(0,1) \cap H^1_0(0,1)) \times L^2(0,1)$. 

\begin{lemma}  \label{lem:var_tangent}
Given $f \in L^2(0,1)$, $\bar v \in H^2(0,1) \cap H^1_0(0,1)$ and $\bar z \in L^2(0,1)$ there exists a unique pair $v \in H^2(0,1) \cap H^1_0(0,1)$, $ z \in L^2(0,1)$ minimizing the instantaneous action 
\begin{equation} \label{eq:Q}
Q_f(v,z) = \frac12 \int_0^1   \kappa v^2 + \lambda v_x^2  + \varepsilon v_{xx}^2 + z^2    \, \dd x 
\end{equation}
under the constraint
\begin{equation} \label{eq:C}
    \forall \phi \in H^1_0(0,1) \qquad \quad 
    \int_0^1 f v_x \phi + fv \phi_x + z \phi \,\dd x =  \int_0^1 f \bar v_x \phi + f \bar v \phi_x + \bar z \phi \, \dd x. \qquad \;
\end{equation}
We will denote the solution mapping by $S : H^2(0,1) \cap H^1_0(0,1) \times L^2(0,1) \to H^2(0,1) \cap H^1_0(0,1) \times L^2(0,1) $, that is $(v,z) = S(\bar v, \bar z)$. 
\end{lemma}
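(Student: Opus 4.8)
The plan is to recognize $Q_f$ as a strictly convex, coercive quadratic functional on the Hilbert space $W := (H^2(0,1)\cap H^1_0(0,1))\times L^2(0,1)$ and the constraint \eqref{eq:C} as an affine (in fact, closed linear plus a fixed shift) condition, so that existence and uniqueness of the minimizer follow from the classical theory of minimization of convex functionals over closed convex sets. First I would equip $W$ with the inner product $\langle (v,z),(w,\zeta)\rangle_W = \int_0^1 v_{xx}w_{xx} + vw + z\zeta\,\dd x$; since $v\in H^1_0$, the Poincar\'e inequality and elliptic regularity for $-v_{xx}$ show this norm is equivalent to the usual $H^2\times L^2$ norm, so $W$ is a Hilbert space. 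Then $Q_f(v,z) = \tfrac12 B((v,z),(v,z))$ where $B$ is the bounded, symmetric bilinear form $B((v,z),(w,\zeta)) = \int_0^1 \kappa vw + \lambda v_xw_x + \varepsilon v_{xx}w_{xx} + z\zeta\,\dd x$; because $\kappa,\varepsilon>0$ and $\lambda\ge 0$, $B$ is coercive on $W$ (the $\varepsilon v_{xx}^2$ and $z^2$ terms alone control the $W$-norm), so $Q_f$ is strictly convex and has compact sublevel sets in the weak topology.

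Next I would show the constraint set is nonempty, closed, and affine. Define the linear map $T_f : W \to H^{-1}(0,1)$ by $\langle T_f(v,z),\phi\rangle = \int_0^1 fv_x\phi + fv\phi_x + z\phi\,\dd x$ for $\phi\in H^1_0$; one checks $T_f$ is bounded (using $f\in L^2$, so $fv, fv_x \in L^1\subset$ something paired against $H^1_0\hookrightarrow C^0$, and $z\in L^2$). The constraint \eqref{eq:C} is exactly $T_f(v,z) = T_f(\bar v,\bar z)$, i.e. $(v,z)$ lies in the affine subspace $(\bar v,\bar z) + \ker T_f$. This set is nonempty (it contains $(\bar v,\bar z)$) and closed and convex. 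Minimizing the strictly convex coercive $Q_f$ over a nonempty closed convex subset of a Hilbert space yields a unique minimizer by the direct method: take a minimizing sequence, extract a weakly convergent subsequence (bounded by coercivity), pass to the limit using weak lower semicontinuity of $Q_f$ and weak closedness of the affine constraint set; uniqueness follows from strict convexity. Finally, the solution map $S(\bar v,\bar z)$ is well-defined; one can note in passing that $S$ is the $Q_f$-orthogonal projection onto $\ker T_f$ composed with the shift, hence linear and bounded, and that $S^2 = S$ (it is a projection), though the lemma as stated only requires existence and uniqueness.

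The only genuinely delicate point is verifying that $T_f$ is well-defined and bounded when $f$ is merely $L^2$: the terms $\int fv_x\phi$ and $\int fv\phi_x$ involve products of three functions, but since $H^1_0(0,1)\hookrightarrow C([0,1])$ with $\|\phi\|_\infty \le C\|\phi\|_{H^1_0}$ and likewise $v\in H^2 \hookrightarrow C^1$, we have $fv_x\phi \in L^1$ with $\|fv_x\phi\|_{L^1}\le \|f\|_{L^2}\|v_x\|_{L^2}\|\phi\|_\infty$ and similarly for the other term, so $T_f$ is bounded with norm controlled by $\|f\|_{L^2}$. Granting this, the rest is the standard Hilbert-space variational argument, and I expect no further obstacle; the structure is essentially that of finding the minimal-norm element in an affine subspace.
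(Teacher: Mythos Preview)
Your argument is correct and follows essentially the same route as the paper: the direct method applied to the strictly convex coercive quadratic $Q_f$ over the nonempty affine constraint set, with uniqueness from strict convexity. The only cosmetic difference is that you package the constraint as $T_f(v,z)=T_f(\bar v,\bar z)$ for a bounded linear $T_f:W\to H^{-1}$ and invoke weak closedness of $\ker T_f$, whereas the paper verifies closedness of the constraint set by hand, using the compact embedding $H^2\hookrightarrow H^1_0$ to upgrade weak $H^2$ convergence of $v_n$ to strong $H^1_0$ convergence and then passing to the limit in each term of \eqref{eq:C} directly.
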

\begin{proof}
Let $C(f, \bar v, \bar z)$ be the set of $v \in H^2(0,1) \cap H^1_0(0,1)$, $ z \in L^2(0,1)$ satisfying the constraint \eqref{eq:C}. Note that if 
$(v_n, z_n) \in C(f, \bar v, \bar z)$, $v_n \to v$ in $H^1_0$, $z_n \rightharpoonup z $ in $L^2$ as $n \to \infty$ and $v \in H^2(0,1)$ then $(v,z) \in C(f, \bar v, \bar z)$. Namely $f \phi \in L^2$ ensures that $\int f(v_n)_x \phi \dd x \to\int f v_x \phi \dd x $ and  $v_n \to v$ in $L^\infty$ implies 
$\int f v_n \phi_x \dd x \to\int f v \phi_x \dd x $ as $n \to \infty$.

We note that $(\bar v, \bar z ) \in C(f, \bar v, \bar z)$ and $Q_f(\bar v, \bar z) < \infty$. Let $(v_n, z_n) \in C(f, \bar v, \bar z)$ be a minimizing sequence of $Q$. Thus $\{v_n\}_n$ is bounded in $H^2$ and $\{z_n\}_n$ is bounded in $L^2$. Hence there is a 
subsequence, which by relabeling we can assume to be the whole sequence, $v_n \to v$ in $H^1_0$ and $z_n \rightharpoonup z$ in $L^2$ to some $v \in H^2 \cap H^1_0$ and $z \in L^2$. By above $(v,z) \in C(f, \bar v, \bar z)$. Since $Q_f$ is sequentially lower-somicontinuous with respect to $H^1$ convergence in $v$ and weak $L^2$ convergence in $z$ we conclude that $(v,z)$ minimizes $Q_f$  over $C(f, \bar v , \bar z)$. 

The uniqueness follows from the fact that $Q_f$ is strictly convex and that $C(f, \bar v, \bar z)$ is convex. 
\end{proof}

We now characterize the solution $(v,z) = S(\bar v, \bar z)$ above via the first variation. We note that condition \eqref{eq:C} implies that $(v-\bar v) f_x$ belongs to $L^2$, in the sense that $\langle (v-\bar v) f_x , \phi \rangle = \int_0^1 (z-\bar z) \phi$ for all $\phi \in H^1_0(0,1)$, where $\langle \tacka, \tacka \rangle$ denotes the dual pairing between $H^{-1}$ and $H^1_0$. It is straightforward to show that for $f \in L^2(0,1)$ and $u \in H^2(0,1) \cap H^1_0(0,1)$ the condition  $u f_x \in L^2(0,1)$ is equivalent to $uf \in H^1(0,1)$. 
This motivates us to introduce the space
\begin{equation} \label{eq:R}
    R(f) = \left\{ u \in H^2(0,1) \cap H^1_0(0,1) \::\: uf \in H^1(0,1) \right\}.
\end{equation}
Note that for $u \in R(f)$ $\; u f_x = (uf)_x - u_x f \in L^2$. 
We also remark that if $f \in H^1$ then $R(f) = H^2(0,1) \cap H^1_0(0,1)$. 

\begin{lemma} \label{lem:first_var_gen}
Consider $f \in L^2(0,1)$, $\bar v \in H^2(0,1) \cap H^1_0(0,1)$ and $\bar z \in L^2(0,1)$. Then $(v,z) = S(\bar v, \bar z)$ if and only if $v \in H^2(0,1) \cap H^1_0(0,1)$, $z \in L^2(0,1)$ and 
\begin{equation} \label{eq:first_var_gen}
\forall u \in R(f) \qquad \;
\int_0^1 \kappa v u + \lambda v_x u_x + \veps v_{xx} u_{xx} + ( (v - \bar v)f_x + \bar z)(u f_x) \dd x = 0.  
\end{equation}
\end{lemma}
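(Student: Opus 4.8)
The plan is to derive \eqref{eq:first_var_gen} as the first-order optimality condition (Euler--Lagrange equation) for the constrained minimization problem solved by $S$ in Lemma \ref{lem:var_tangent}, using the fact that the admissible set $C(f,\bar v,\bar z)$ is an affine subspace. First I would identify the linear space of admissible variations: if $(v,z)\in C(f,\bar v,\bar z)$ and $(v',z')\in C(f,\bar v,\bar z)$, then their difference $(u,w):=(v'-v,z'-z)$ satisfies the homogeneous constraint $\int_0^1 f u_x\phi + fu\phi_x + w\phi\,\dd x = 0$ for all $\phi\in H^1_0(0,1)$. As remarked before the statement, this says precisely that $uf_x\in L^2(0,1)$ with $\langle uf_x,\phi\rangle = -\int_0^1 w\phi\,\dd x$, i.e. $w = -uf_x$ as an $L^2$ function; and by the equivalence noted in the text, $uf_x\in L^2$ is the same as $uf\in H^1$, so $u\in R(f)$. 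Conversely, for any $u\in R(f)$, the pair $(u,-uf_x)$ is an admissible variation, so that $(v+su, z - s\,uf_x)\in C(f,\bar v,\bar z)$ for all $s\in\R$. Thus the set of feasible directions is exactly $\{(u,-uf_x): u\in R(f)\}$.

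Next I would write the optimality condition. Since $Q_f$ is a smooth (quadratic) strictly convex functional and $C(f,\bar v,\bar z)$ is convex, $(v,z) = S(\bar v,\bar z)$ if and only if the directional derivative of $Q_f$ at $(v,z)$ vanishes along every feasible direction. Computing $\frac{d}{ds}\Big|_{s=0} Q_f(v+su, z - s\,uf_x)$ gives
\[
\int_0^1 \kappa v u + \lambda v_x u_x + \veps v_{xx} u_{xx} - z\,(uf_x)\,\dd x = 0 \qquad \text{for all } u\in R(f).
\]
To match the stated form, I substitute $z = (v-\bar v)f_x + \bar z$ — which is exactly the relation guaranteed by the constraint \eqref{eq:C} applied to $(v,z)$ versus $(\bar v,\bar z)$, namely $(v-\bar v)f_x + (z-\bar z) = 0$ in the $L^2$ sense discussed before the lemma — rearranged as $z = (v-\bar v)f_x + \bar z$ wait, more precisely the constraint gives $z - \bar z = -(v-\bar v)f_x$, hence $-z = (v-\bar v)f_x - \bar z$; I would double-check the sign bookkeeping against \eqref{eq:C} so that the term appears as $+\big((v-\bar v)f_x + \bar z\big)(uf_x)$ in \eqref{eq:first_var_gen}. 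For the converse direction, strict convexity of $Q_f$ on the affine set $C(f,\bar v,\bar z)$ ensures that a critical point is the unique global minimizer, so \eqref{eq:first_var_gen} characterizes $S(\bar v,\bar z)$ completely.

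The one genuinely delicate point is the justification that the difference of two admissible pairs has second component $w = -uf_x$ as an honest $L^2$ function rather than merely as an $H^{-1}$ distribution, together with the claimed equivalence $uf_x\in L^2 \Leftrightarrow uf\in H^1$ for $u\in H^2\cap H^1_0$ and $f\in L^2$; this is where the regularity of $v$ (namely $v\in H^2$, which forces $v\in C^1$ by Sobolev embedding in one dimension) is used, so that $uf$ is at least in $L^2$ and one can integrate by parts the weak constraint to read off $(uf)_x = u_x f + uf_x\in L^2$. I expect the rest — the quadratic variation computation and the convexity argument — to be routine. I would also note that every integral in \eqref{eq:first_var_gen} is well-defined: $\kappa vu,\lambda v_xu_x,\veps v_{xx}u_{xx}$ because $v,u\in H^2$, and $\big((v-\bar v)f_x+\bar z\big)(uf_x)$ because both factors are in $L^2$ (the first since $v,\bar v\in R(f)$ as $f$ need not be $H^1$, so strictly I should assume or verify $\bar v\in R(f)$; if $f\notin H^1$ one works with $v,\bar v\in R(f)$ throughout, which is consistent with Lemma \ref{lem:var_tangent} since $\bar v\in R(f)$ follows from $\bar z = \bar z$ and the observation that the constraint forces $(v-\bar v)f_x\in L^2$, so $v\in R(f)\iff \bar v\in R(f)$).
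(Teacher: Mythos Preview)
Your strategy matches the paper's: parameterize the affine constraint set by $R(f)$ and read off the Euler--Lagrange condition as the vanishing first variation of $Q_f$ along feasible directions. Two corrections and one comparison are in order.

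\textbf{Sign of the feasible direction.} From the definition $\langle -vf_x,\phi\rangle=\int_0^1 v_xf\phi+vf\phi_x\,\dd x$, the constraint \eqref{eq:C} reads $-vf_x+z=-\bar v f_x+\bar z$ in $H^{-1}$, hence $z-\bar z = +(v-\bar v)f_x$, not the minus sign you wrote. Consequently the feasible directions are $(u,+uf_x)$ for $u\in R(f)$ (this is exactly the pair $(u,h)$ with $h=uf_x$ used in the paper), and the first variation gives
\[
\int_0^1 \kappa vu+\lambda v_xu_x+\veps v_{xx}u_{xx}+z\,(uf_x)\,\dd x=0.
\]
Substituting $z=(v-\bar v)f_x+\bar z$ yields \eqref{eq:first_var_gen} directly, with no residual sign to chase.

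\textbf{The converse.} Your appeal to strict convexity on $C(f,\bar v,\bar z)$ is fine once you know $(v,z)\in C(f,\bar v,\bar z)$, but the lemma as stated does not assume this. The paper instead lets $(\tilde v,\tilde z)=S(\bar v,\bar z)$, notes that \eqref{eq:first_var_gen} holds for both $v$ and $\tilde v$, and tests the difference with $u=v-\tilde v\in R(f)$ (this membership follows because $v-\bar v$ and $\tilde v-\bar v$ are each in $R(f)$). Subtraction yields
\[
\int_0^1 \kappa(v-\tilde v)^2+\lambda(v_x-\tilde v_x)^2+\veps(v_{xx}-\tilde v_{xx})^2+\big((v-\tilde v)f_x\big)^2\,\dd x=0,
\]
forcing $v=\tilde v$. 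This is the same convexity idea, but packaged so that one works purely with the variational identity \eqref{eq:first_var_gen} rather than with membership in $C$.

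\textbf{On $\bar v\in R(f)$.} Your concern here is misdirected. Neither $v$ nor $\bar v$ need lie in $R(f)$ individually; what is used (and what is implicitly required for the integral in \eqref{eq:first_var_gen} to make sense) is $v-\bar v\in R(f)$, which gives $(v-\bar v)f_x+\bar z\in L^2$. Your final parenthetical ``$v\in R(f)\iff\bar v\in R(f)$'' is true but not the relevant point.
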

\begin{proof}
If  $(v,z) = S(\bar v, \bar z)$ then $v \in H^2(0,1) \cap H^1_0(0,1)$, $z \in L^2(0,1)$. Taking $u \in \R(f)$ and $h = u f_x $, which belongs to $L^2$, since $u \in R(f)$ we note that $(v + s u, z + sh) \in C(f, \bar v, \bar z)$. First variation in of $Q_f$ at $(v,z)$ gives the condition \eqref{eq:first_var_gen}. 

Now assume that $v \in H^2(0,1) \cap H^1_0(0,1)$, $z \in L^2(0,1)$ and that \eqref{eq:first_var_gen} holds. Let $(\tilde v, \tilde z) = S(\bar v, \bar z)$. By above \eqref{eq:first_var_gen} holds for $(\tilde v, \tilde z)$ in place of $(v,z)$. Furthermore note that $v - \tilde v = (v - \bar v) - (\tilde v - \bar v)  \in R(f)$. Taking $u = v - \tilde v$ and subtracting the two forms of \eqref{eq:first_var_gen} gives
\[ \int_0^1 \kappa (v - \tilde v)^2   + \lambda (v_x - \tilde v_x)^2 + \veps (v_{xx} - \tilde v_{xx})^2 + ( (v - \tilde  v)f_x)^2  \dd x =0\]
Thus $v = \tilde v$ and $z = \tilde z$.
\end{proof}

It is useful to note that if $f \in H^1(0,1)$ and hence $R(f)  = H^2(0,1) \cap H^1_0(0,1)$, the condition \eqref{eq:first_var_gen} means that for $g = - f_x \bar v + \bar z$,   $\;v$ is a weak solution of 
\begin{align}
\begin{split}
 \veps v_{xxxx} -\lambda v_{xx} + \kappa v + f_x^2 v & = -  f_x g   \quad \te{on } (0,1) \\
 v & = 0 \quad \te{at } \{0,1\} \\
 v_{xx} & = 0 \quad \te{at } \{0,1\}.
 \end{split} 
\end{align}

The above lemmas allow us to characterize the tangential velocities, namely we define
\begin{equation} \label{eq:tan_space}
    \Tan_f = \left\{ (v,z) \in H^2(0,1) \cap H^1_0(0,1) \times  L^2(0,1) \;:\: (v,z) = S(v,z) \right\}. 
\end{equation}
Lemma \ref{lem:first_var_gen} gives that a pair $(v,z) \in H^2(0,1) \cap H^1_0(0,1) \times  L^2(0,1)$ belongs to $\Tan_f$ if and only if
\begin{equation} \label{eq:cond_tan}
    \forall u \in R(f) \qquad \;
\int_0^1 \kappa v u + \lambda v_x u_x + \veps v_{xx} u_{xx} +  z u f_x \dd x = 0. 
\end{equation}

In the special case that $f \in H^1(0,1)$ we can characterize 
$\Tan_f$ as the set of pairs $(v,z)$ where  $z \in L^2(0,1)$ and $v \in H^2(0,1) \cap H^1_0(0,1)$ is a weak solution of 
\begin{align} \label{eq:cond_tanH1}
\begin{split}
 \veps v_{xxxx} -\lambda v_{xx} + \kappa v  & = -  f_x z   \quad \te{on } (0,1) \\
 v & = 0 \quad \te{at } \{0,1\} \\
 v_{xx} & = 0 \quad \te{at } \{0,1\}.
 \end{split} 
\end{align}

We furthermore remark that if $u \in R(f)$ then, since $H^1$ functions in one dimension are continuous we have that both $uf$ and $u$ are continuous. Thus $f= uf/u$ must be continuous on the set where $u \neq 0$. In other words if $f$ does not have a representative that is continuous at some $x$ then all functions $u \in R(f)$ are equal to zero at $x$; $u(x)=0$. 
In particular if $f$ in nowhere continuous then the only $u$ in $R(f)$ is the zero function. Consequently if $f$ is nowhere continuous then $\Tan_f = H^2(0,1)\cap H^1_0(0,1) \times L^2(0,1)$, that  every pair is a tangent pair.





\subsection{Existence of minimizing paths}\label{subsec:exist}
Here we show the existence of minimizers of the action \eqref{eq:action1}


\begin{theorem}\label{th:minimizer_existence}
Consider  $f_0,f_1\in L^{2}(0,1)$. 
There exists an admissible path $(f,v,z) \in \mathcal A(f_0, f_1) $ minimizing the action \eqref{eq:action1}.
\end{theorem}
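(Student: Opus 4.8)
The plan is to use the direct method of the calculus of variations. First I would take a minimizing sequence $(f_n, v_n, z_n) \in \mathcal A(f_0, f_1)$, so that $A_{\kappa,\lambda,\veps}(f_n,v_n,z_n) \to \inf A$. By Proposition \ref{prop:properties}(i), the infimum is finite (bounded by $\|f_0-f_1\|_{L^2}^2/2$, after noting the length equals $\sqrt{A}$ on reparametrized paths), so we may assume $A(f_n,v_n,z_n)$ is uniformly bounded. This immediately gives uniform bounds: $\kappa \|v_n\|_{L^2((0,1),L^2)}^2$, $\lambda\|(v_n)_x\|^2$, $\veps\|(v_n)_{xx}\|^2$, and $\|z_n\|_{L^2((0,1),L^2)}^2$ are all controlled. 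Since $\kappa>0$ and $\veps>0$, this yields a uniform bound on $v_n$ in $L^2((0,1), H^2(0,1)\cap H^1_0(0,1)) = \mathcal V$, hence (up to subsequence) $v_n \rightharpoonup v$ weakly in $\mathcal V$ and $z_n \rightharpoonup z$ weakly in $L^2((0,1),L^2)$.

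Next I would extract compactness for $f_n$. The subtle point is that $f_n$ is only bounded in $L^2((0,1),L^2(0,1))$ a priori, so $f_n \rightharpoonup f$ weakly there (up to subsequence). To pass to the limit in the weak formulation \eqref{eq:weaksol} I need to handle the nonlinear term $\int\int f_n \phi \partial_x v_n + f_n \partial_x\phi\, v_n$. Here the product $f_n v_n$ (or $f_n (v_n)_x$) is a product of a weakly-$L^2$ sequence and a strongly-convergent sequence: since $v_n$ is bounded in $L^2((0,1),H^2)$ and $\partial_t f_n = z_n - \partial_x f_n \cdot v_n$ is controlled in a negative-order space, an Aubin–Lions type argument should give $f_n \to f$ strongly in $C([0,1],H^{-1})$, or at least $v_n \to v$ strongly enough in $L^2((0,1),L^\infty(0,1))$ (compact embedding $H^2 \hookrightarrow\hookrightarrow C^1$ in one dimension, combined with the $L^2$-in-time bound and the equicontinuity in time that the equation provides) that the products converge in the sense of distributions. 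Concretely, $v_n \to v$ and $(v_n)_x \to v_x$ strongly in $L^2((0,1),C([0,1]))$ while $f_n\rightharpoonup f$ weakly in $L^2((0,1)\times(0,1))$ suffices to pass to the limit in each term of \eqref{eq:weaksol}, giving $\partial_t f + \partial_x f\cdot v = z$ weakly with the correct endpoints $f(\cdot,0)=f_0$, $f(\cdot,1)=f_1$ (the endpoint conditions survive because $f_n \to f$ in $C([0,1],H^{-1})$). Hence $(f,v,z)\in\mathcal A(f_0,f_1)$.

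Finally, lower semicontinuity: the action $A_{\kappa,\lambda,\veps}$ is a convex (indeed quadratic, nonnegative) functional of $(v, v_x, v_{xx}, z)$, so it is weakly lower semicontinuous with respect to the weak convergence $v_n\rightharpoonup v$ in $\mathcal V$, $z_n\rightharpoonup z$ in $L^2$. Therefore $A(f,v,z) \le \liminf_n A(f_n,v_n,z_n) = \inf A$, and since $(f,v,z)$ is admissible, it is a minimizer.

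The main obstacle I expect is the compactness/limit-passage step for $f_n$ and the nonlinear transport term: one must verify carefully that the only-$L^2$ bound on $f_n$ together with the equation yields enough time-regularity (via $\partial_t f_n$ bounded in $L^2((0,1),H^{-s})$ for suitable $s$, using that $v_n$ is bounded in $L^\infty_x$ uniformly in an $L^2_t$ sense) to invoke an Aubin–Lions–Simon compactness lemma, and that the strong convergence of $v_n$ in space (from the one-dimensional compact Sobolev embedding $H^2\hookrightarrow C^1$) is exactly what is needed to take limits in the bilinear terms $f_n(v_n)_x$ and $f_n v_n$. Everything else — the a priori bounds and the weak lower semicontinuity of the quadratic action — is routine.
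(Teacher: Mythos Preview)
Your overall strategy is the same as the paper's --- direct method, weak compactness for $(v_n,z_n)$, some Aubin--Lions argument to upgrade the convergence enough to pass the bilinear term, then weak lower semicontinuity of the quadratic action. But the ``concrete'' step you settle on has a real gap.

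You claim that $v_n \to v$ and $(v_n)_x \to v_x$ strongly in $L^2((0,1),C([0,1]))$, justified by ``the compact embedding $H^2\hookrightarrow\hookrightarrow C^1$ \dots\ and the equicontinuity in time that the equation provides.'' The equation provides no time regularity for $v$: the constraint $\partial_t f + \partial_x f\cdot v = z$ controls $\partial_t f$, not $\partial_t v$, and an element of $\mathcal V = L^2((0,1),H^2\cap H^1_0)$ can oscillate arbitrarily in $t$. For instance $v_n(x,t)=\sin(2\pi n t)\,\psi(x)$ with fixed $\psi\in H^2\cap H^1_0$ is bounded in $\mathcal V$, converges weakly to $0$, but does not converge strongly in $L^2((0,1),C([0,1]))$. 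So the spatial compact embedding alone cannot be promoted to strong $L^2_t$--convergence; an Aubin--Lions argument on $v_n$ is simply unavailable.

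The alternative you mention --- Aubin--Lions on $f_n$ --- is the correct route, and is exactly what the paper does. Two points you skipped are needed to make it work. First, the action does not contain $f$, so the bound $\sup_n\|f_n\|_{L^\infty(0,1,L^2)}<\infty$ is not ``a priori'': it comes from the Lagrangian representation $f_n(\Phi_n(x,t),t)=f_0(x)+\int_0^t z_n(\Phi_n(x,s),s)\,\dd s$ together with the Jacobian estimate $|D\Phi_n|\le \exp(C\|v_n\|_{L^2_tH^2_x})$. Second, from the weak formulation one gets $\partial_t f_n$ bounded in $L^2(0,1,H^{-1})$, and Aubin--Lions with $L^2\hookrightarrow\hookrightarrow H^{-1/2}\hookrightarrow H^{-1}$ then yields $f_n\to f$ strongly in $L^2(0,1,H^{-1/2})$. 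This is precisely what is needed to pass the nonlinear term: one splits
\[
\int\!\!\int f_n\,\phi\,\partial_x v_n - \int\!\!\int f\,\phi\,\partial_x v
= \int\!\!\int (f_n-f)\,\phi\,\partial_x v_n + \int\!\!\int f\,\phi\,(\partial_x v_n-\partial_x v),
\]
controls the first piece by $\|f_n-f\|_{L^2_tH^{-1/2}_x}\sup_n\|\phi\,\partial_x v_n\|_{L^2_tH^{1/2}_x}$ (finite since $\partial_x v_n$ is bounded in $L^2_tH^1_x$), and handles the second by weak convergence of $\partial_x v_n$ against the fixed $L^2$ function $f\phi$. The term $\int\!\!\int f_n v_n\,\partial_x\phi$ is treated identically. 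So: keep your outline, drop the claimed strong convergence of $v_n$, and run Aubin--Lions on $f_n$ instead.
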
                           

Before proving Theorem \ref{th:minimizer_existence}, we first establish several  properties of solutions of the transport equation. These results and the techniques are standard, we present them for completeness. 

\begin{lemma}\label{lem:representation_sol}
 Given $v\in\mathcal V$, $z\in L^2((0,1)^2)$. Let $f_0\in L^2(0,1)$. If $f\in L^{2}((0,1)^2)$ is a weak solution to the initial value problem
 \[
 f(\cdot,0)=f_0,\quad\partial_t f+\partial_x f\cdot v=z \quad \;\te{on } [0,1]^2,
 \]
 in the sense that for every $\phi\in C^{\infty}([0,1]^2)$, \eqref{eq:weaksol} holds, then
$f$ has the following representation: for  a.e. $x,t$ in $[0,1]^2$,
\begin{equation}\label{eq:solf}
f(\Phi(x,t),t)=f_0(x)+\int_0^t z(\Phi(x,s),s)\,\dd s, 
\end{equation}
where $\Phi$ is the flow of the vector field $v$:
\begin{equation}\label{eq:flow-map}
\partial_t \Phi(x,t)=v(\Phi(t,x),t),\quad \Phi(0,x)=x.
\end{equation}
It follows that the weak solution $f$ is unique. 
 \end{lemma}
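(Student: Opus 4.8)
The plan is to derive the representation \eqref{eq:solf} by the method of characteristics, turning the formal identity $\frac{\dd}{\dd t}f(\Phi(x,t),t)=z(\Phi(x,t),t)$ into a rigorous statement by pulling the weak formulation \eqref{eq:weaksol} back along the flow; uniqueness is then an immediate corollary. As a first step I record the properties of $\Phi$. In one space dimension $H^2(0,1)\hookrightarrow C^1([0,1])$, so $\|v(t,\tacka)\|_{C^1}\le C\|v(t,\tacka)\|_{H^2}$, and since $t\mapsto\|v(t,\tacka)\|_{H^2}$ lies in $L^2(0,1)\subset L^1(0,1)$, the non-autonomous ODE \eqref{eq:flow-map} is of Carathéodory type: for each $x\in[0,1]$ there is a unique absolutely continuous trajectory $t\mapsto\Phi(x,t)$, the flow is $C^1$ in $x$ because the field is $C^1$ in space, and its Jacobian $J(x,t):=\partial_x\Phi(x,t)$ solves $\partial_t J=v_x(\Phi,t)\,J$, $J(\tacka,0)\equiv1$, hence $J(x,t)=\exp\big(\int_0^t v_x(\Phi(x,s),s)\,\dd s\big)$; the bound $|v_x(\Phi(x,s),s)|\le\|v_x(s,\tacka)\|_{L^\infty}\le C\|v(s,\tacka)\|_{H^2}$, integrable in $s$, makes $J$ bounded above and below uniformly in $(x,t)$. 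The boundary conditions $v(t,0)=v(t,1)=0$ pin $\Phi(0,t)\equiv0$ and $\Phi(1,t)\equiv1$, and uniqueness of trajectories turns $\Phi(t,\tacka)$ into a bi-Lipschitz increasing homeomorphism of $[0,1]$ whose inverse $\Psi(t,\tacka)$ obeys the same bounds; in particular $\tilde z(x,t):=z(\Phi(x,t),t)\in L^2((0,1)^2)$.

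Next I would change variables in \eqref{eq:weaksol}. Fix $\psi\in C^\infty([0,1]^2)$ whose $x$-support is, for every $t$, contained in a fixed compact subset of $(0,1)$, and define $\phi$ by $\phi(\Phi(x,t),t):=\psi(x,t)/J(x,t)$, i.e.\ $\phi(y,t)=\psi(\Psi(y,t),t)/J(\Psi(y,t),t)$, extended by zero; monotonicity of $\Phi(t,\tacka)$ together with $\Phi(t,0)=0$ and $\Phi(t,1)=1$ keeps the $x$-support of $\phi$ inside $(0,1)$. Differentiating $\phi(\Phi(x,t),t)$ in $t$ and using $\partial_t J=(v_x\circ\Phi)\,J$ yields the key identity $(\partial_t\phi+\partial_x(v\phi))(\Phi(x,t),t)=\partial_t\psi(x,t)/J(x,t)$. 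Rewriting \eqref{eq:weaksol} as $-\int\!\int f\,(\partial_t\phi+\partial_x(v\phi))=\int\!\int z\phi-\int f_1\phi(\tacka,1)+\int f_0\phi(\tacka,0)$, substituting this $\phi$, and changing variables $y=\Phi(x,t)$ (so $\dd y=J\,\dd x$) cancels every Jacobian factor and leaves, with $\tilde f(x,t):=f(\Phi(x,t),t)$,
\[
-\int_0^1\!\int_0^1\tilde f\,\partial_t\psi\,\dd x\,\dd t=\int_0^1\!\int_0^1\tilde z\,\psi\,\dd x\,\dd t-\int_0^1\tilde f(x,1)\,\psi(x,1)\,\dd x+\int_0^1 f_0(x)\,\psi(x,0)\,\dd x.
\]
This is exactly the weak formulation of $\partial_t\tilde f=\tilde z$ on $(0,1)^2$ with initial trace $\tilde f(\tacka,0)=f_0$; since $\tilde z\in L^2\subset L^1$, localizing in $x$ and invoking the fundamental theorem of calculus for absolutely continuous functions gives $\tilde f(x,t)=f_0(x)+\int_0^t\tilde z(x,s)\,\dd s$ for a.e.\ $(x,t)$, which is \eqref{eq:solf}. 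Uniqueness then follows at once: the right-hand side of \eqref{eq:solf} does not involve $f$, and $\Phi(t,\tacka)$ is a bijection of $(0,1)$ with Jacobian bounded away from $0$ and $\infty$ for every $t$, so $f$ is determined a.e.\ on $(0,1)^2$.

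The main obstacle is not the computation, which is merely the formal characteristics identity, but the regularity bookkeeping: constructing the flow and its Jacobian with only $L^2$-in-time control on $v$, and, more seriously, justifying that the pulled-back function $\phi$ — Lipschitz in $x$, absolutely continuous in $t$, but not $C^\infty$ — may legitimately be inserted into \eqref{eq:weaksol}, which is stated only for smooth test functions. The cleanest route is to mollify $v$ first in both variables, run the whole argument with the resulting smooth flows and classical solutions (where no such density question arises), and then pass to the limit using the uniform flow bounds; this has the side benefit of also supplying the flow estimates used in the first step.
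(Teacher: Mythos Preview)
Your argument is correct, and the key identity $(\partial_t\phi+\partial_x(v\phi))(\Phi(x,t),t)=\partial_t\psi(x,t)/J(x,t)$ is exactly the cancellation that makes the method of characteristics go through at the weak level. You have also correctly flagged the only genuine technicality: the pulled-back test function $\phi$ is not $C^\infty$, and one needs either a density step for test functions in \eqref{eq:weaksol} or a mollification of $v$ followed by a stability-of-flows limit. Either route works in this Lipschitz-in-space setting.

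The paper proceeds differently. Rather than carrying out the change of variables by hand, it extends $f_0,f,v,z$ by zero from $[0,1]$ to $\mathbb R$ (the extension of $v$ stays in $L^2(0,1,W^{1,\infty}(\mathbb R))$ because $v$ vanishes at the endpoints), checks that the extended $\tilde f$ satisfies the weak transport equation on $\mathbb R\times[0,1)$, and then invokes a known representation result for transport equations with Lipschitz-in-space drift (Proposition~2.3 of \cite{Amb14}). Since $\tilde v\equiv0$ outside $(0,1)$, the flow leaves $(0,1)$ invariant, and restricting gives \eqref{eq:solf}. In effect the paper outsources precisely the work you do directly: your pull-back computation, together with the mollification you propose, \emph{is} the proof of the cited proposition in this regularity regime. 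Your approach is therefore more self-contained and makes the mechanism transparent, while the paper's is shorter and avoids re-deriving a standard lemma; the extension-to-$\mathbb R$ trick also spares the paper from tracking compact $x$-support of test functions inside $(0,1)$, which you handle via the invariance of the endpoints under the flow.
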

 
 \begin{proof}
Let us extend $f_0,z,v,f$ by zero outside of $[0,1]$, and denote the extensions by $\tilde z,\tilde f_0,\tilde v,\tilde f$. Note that $\tilde v \in L^2(0,1, W^{1, \infty}(\R))$ since $H^2(0,1)$ embeds in $W^{1, \infty}(0,1)$, $v(0,\tacka) \equiv 0 $, $v(1,\tacka) \equiv 0$, and extending by zero preserves the Lipschitz constant. 

 Then $\tilde f\in L^{2}(0,1,L^2(\mathbb R))$ satisfies that for all test functions $\phi\in C^{\infty}(\mathbb R\times [0,1])$ with compact support in $\mathbb R\times [0,1)$
\[-\int_0^1 \int_{\mathbb R}\tilde f\partial_t\phi\,\dd x\dd t -\int_{\mathbb R}\tilde f_0 \phi(x,0)\,\dd x=\int_0^1\int_{\mathbb R} \tilde f(\tilde v\partial_x\phi+\phi\partial_x \tilde v)\,\dd x\dd t+\int_0^1\int_0^1 \phi \tilde z\dd x\dd t.\]
Since $\tilde v$ is Lipschitz in space, the representation of solutions of the transport equation (see Proposition 2.3 in \cite{Amb14}), 
for  a.e. $x$
\[
\tilde f(\tilde \Phi(x,t),t)=\tilde f_0(x)+\int_0^t \tilde z(\tilde \Phi(x,s),s)\,\dd s, \quad \; t\in [0,1],
\]
where $\tilde \Phi$ is the flow of the vector field $\tilde v$, which satisfies the differential equation
\[
\partial_t \tilde \Phi(x,t)=\tilde v(\tilde \Phi(x,t),t),\quad \tilde \Phi(0,x)=x
\]
for each $t$.  
Notice that since $\tilde v(x,t)=0$ for all $x \in \R \setminus (0,1)$ and all $t\in [0,1]$, by the uniqueness of flow map we have that
for a.e. $x \in (0,1)$, $\tilde \Phi(x,t) \in (0,1)$ for all $t$. 
Define $\Phi(x,t):=\tilde \Phi(x,t)\mathbbm{1}_{\{x\in [0,1]\}}$.
Then $\Phi$ is the flow of $v$ and 
for a.e. $x$ in $[0,1]$,
\[
f(\Phi(x,t),t)=f_0(x)+\int_0^t z(\Phi(x,s),s)\,\dd s, \quad\forall t\in [0,1].
\]
\end{proof}

\begin{remark}\label{rem:zformula} [Representation formulas for action minimizing paths.] Note that if $z \in L^2((0,1)^2)$ is a weak solution of $\partial_t z + \partial_x(zv) = 0$ and $v$ satisfies the conditions of the previous lemma, then it is a weak solution of $\partial_t z + (\partial_x z) v= -z \partial_x v$. Thus 
\[ z( \Phi ( x, t) , t) = z( x, 0 ) - \int_0^t z( \Phi ( x, s) , s) 
 v_x ( \Phi(x, s), s) \dd s \]
which is an integral form of the ODE whose solution is 
\begin{equation} \label{eq:solz}
 z( \Phi ( x, t) , t) = z( x, 0 )\, e^{-\int_0^t v_x ( \Phi(x, s), s) \dd s}, 
\end{equation}
where $\Phi$ is the flow of $v$. Here we emphasize that by $v_x$ we always denote the partial derivative of $v$ with respect to the first variable, and not the derivative of the composition.

Let $J(x,t) = e^{-\int_0^t v_x ( \Phi(x, s), s) \dd s}$. We have the following representation:
\[
f_1(\Phi(x,1))=f_0(x)+\int_0^1z( \Phi ( x, t) , t)\,\dd t=f_0(x)+z(x,0) \int_0^1 \,J(x,t) \,\dd t.
\]
where the first equality follows from Lemma \ref{lem:representation_sol}. This allows us to determine $z(x,0)$ and hence
\begin{equation} \label{eq:zJ}
 z( \Phi ( x, t) , t) = \left( f_1 ( \Phi(x, 1) ) - f_0 (x) \right) \, \frac{J(x,t)}{\int_{0}^{1} J(x,\tau)  \dd \tau}. 
\end{equation}
Hence we have the following formula for $f( \Phi ( x, t) , t)$ 
\begin{align}
f(\Phi(x,t),t) &= f_0(x)+\int_0^t z(\Phi(x,s),s)\,\dd s = f_0(x) + ( f_1 ( \Phi(x, 1) ) - f_0 (x)) \,  \frac{\int_0^t J(x,s) \dd s}{\int_0^1 J(x,s) \dd s} \nonumber \\
 &= \left(1-\eta(x,t) \right) \,  f_0( x) +  \eta(x,t) \, f_1 ( \Phi ( x, 1)), \quad \te{where } \eta(x,t) =  \frac{\int_0^t J(x,s) \dd s}{\int_0^1 J(x,s) \dd s}. \label{eq:f-formula}
\end{align}
Equation~\eqref{eq:f-formula}  implies that at any $t\in [0,1]$ and a fixed $x \in [0,1]$, $f ( \Phi ( x, t), t ) $ is an interpolation between the initial condition $f_0(x) = f_0(\Phi ( x, 0))$ and the final-time condition  $f_1 (\Phi ( x, 1))$, with a time-dependent weight function $\eta(x,t)$ determined by the velocity $v$.

We note that if $(f,v,z) \in \mathcal A(f_0,f_1)$ then $f$ is a weak solution of $f_t + f_x v =z$. We show in Section \ref{sec:EL}, equation \eqref{eq:ELz1} that if the path is a critical point of the action then 
$z$ solves the continuity equation $z_t + (zv)_x=0$. The formulas \eqref{eq:f-formula} and \eqref{eq:zJ} provide the Lagrangian representation formulas for  $f$ and $z$.

\smallskip

\end{remark}
 \smallskip

\begin{lemma}\label{f_n_energy_est}
Let $f_0\in L^2(0,1)$, $v\in L^2(0,1, H^2(0,1) \cap H^1_0(0,1))$, $z\in L^2((0,1)^2)$.  If $f$ has the representation \eqref{eq:solf}
then 
\[
\max\limits_{0\leq t\leq 1}\|f(\cdot,t)\|_{L^2(0,1)}\leq \exp(\sqrt{2}\|v\|_{L^2(0,1,H^2(0,1))})(\|f_0\|_{L^2(0,1)}+\|z\|_{L^2((0,1)^2)}).
\]
\end{lemma}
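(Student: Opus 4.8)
The plan is to work directly from the Lagrangian representation formula \eqref{eq:solf}, estimating the $L^2$ norm of $f(\cdot,t)$ by pushing forward along the flow $\Phi$ and using that the Jacobian of $\Phi(\cdot,t)$ is controlled by the (spatial) Lipschitz constant of $v$. First I would fix $t \in [0,1]$ and change variables $y = \Phi(x,t)$ in the integral $\int_0^1 |f(y,t)|^2 \dd y$. Since $\Phi(\cdot,t)$ is a bi-Lipschitz homeomorphism of $[0,1]$ onto itself (as established in the proof of Lemma \ref{lem:representation_sol}, using $v(0,\cdot)=v(1,\cdot)=0$), we get $\int_0^1 |f(y,t)|^2 \dd y = \int_0^1 |f(\Phi(x,t),t)|^2 \, \partial_x \Phi(x,t) \,\dd x$. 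The Jacobian satisfies $\partial_x\Phi(x,t) = \exp\!\big(\int_0^t v_x(\Phi(x,s),s)\,\dd s\big) \le \exp\!\big(\int_0^t \|v_x(\cdot,s)\|_{L^\infty}\,\dd s\big)$, by differentiating the flow equation \eqref{eq:flow-map} in $x$. Using the embedding $H^2(0,1)\hookrightarrow W^{1,\infty}(0,1)$, there is $C$ with $\|v_x(\cdot,s)\|_{L^\infty}\le C\|v(\cdot,s)\|_{H^2}$, and Cauchy--Schwarz in $s$ gives $\int_0^t \|v_x(\cdot,s)\|_{L^\infty}\,\dd s \le C\,\|v\|_{L^2((0,1),H^2(0,1))}$; the constant $C$ can be taken to be $\sqrt2$ after tracking the Poincaré/embedding constants on the interval $[0,1]$ with the boundary conditions, which is what the stated bound asserts.

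Next I would bound the integrand pointwise. From \eqref{eq:solf}, $|f(\Phi(x,t),t)| \le |f_0(x)| + \int_0^t |z(\Phi(x,s),s)|\,\dd s$, so by Minkowski's inequality in the $L^2_x$ norm,
\[
\|f(\Phi(\cdot,t),t)\|_{L^2_x} \le \|f_0\|_{L^2} + \int_0^1 \|z(\Phi(\cdot,s),s)\|_{L^2_x}\,\dd s.
\]
Each term $\|z(\Phi(\cdot,s),s)\|_{L^2_x}$ is handled by the inverse change of variables: $\int_0^1 |z(\Phi(x,s),s)|^2\,\dd x = \int_0^1 |z(y,s)|^2\,(\partial_x\Phi)^{-1}\big|_{x=\Phi^{-1}(y,s)}\,\dd y$, and $(\partial_x\Phi)^{-1} = \exp\!\big(\int_0^s v_x(\Phi(x,\tau),\tau)\,\dd\tau\big) \le \exp\!\big(\sqrt2\|v\|_{L^2((0,1),H^2)}\big)$ by the same estimate with the sign reversed (the bound on $|v_x|$ is two-sided). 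Hence $\|z(\Phi(\cdot,s),s)\|_{L^2_x} \le \exp(\tfrac{\sqrt2}{2}\|v\|_{\ldots})\|z(\cdot,s)\|_{L^2_x}$, and integrating in $s$ and applying Cauchy--Schwarz gives $\int_0^1\|z(\Phi(\cdot,s),s)\|_{L^2_x}\,\dd s \le \exp(\tfrac{\sqrt2}{2}\|v\|_{\ldots})\|z\|_{L^2((0,1)^2)}$.

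Combining the two steps: multiplying the pointwise bound by the Jacobian factor from the first change of variables and taking square roots,
\[
\|f(\cdot,t)\|_{L^2_x} \le \exp\!\Big(\tfrac{\sqrt2}{2}\|v\|_{L^2((0,1),H^2)}\Big)\Big(\|f_0\|_{L^2} + \exp\!\big(\tfrac{\sqrt2}{2}\|v\|_{L^2((0,1),H^2)}\big)\|z\|_{L^2((0,1)^2)}\Big),
\]
and bounding each exponential crudely by $\exp(\sqrt2\|v\|_{L^2((0,1),H^2)})$ and combining yields the claimed inequality uniformly in $t$. Taking the maximum over $t\in[0,1]$ finishes the proof. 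The main obstacle is bookkeeping the constants so that the exponent is exactly $\sqrt2\|v\|_{L^2((0,1),H^2)}$ rather than $C\|v\|$ for some unspecified $C$: this requires being careful that the embedding $H^2(0,1)\cap H^1_0(0,1)\hookrightarrow W^{1,\infty}$ combined with the boundary conditions gives $\|v_x(\cdot,s)\|_{L^\infty}\le \|v(\cdot,s)\|_{H^2}$ (or whatever constant makes the Cauchy--Schwarz step produce $\sqrt2$), and I would either verify this elementary inequality on $(0,1)$ directly or note that it is the normalization implicit in the statement; everything else is routine change-of-variables and Minkowski/Cauchy--Schwarz.
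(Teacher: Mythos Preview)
Your proposal is correct and follows essentially the same route as the paper: change variables via the flow map $\Phi(\cdot,t)$, control the Jacobian $\partial_x\Phi$ through $\|v_x\|_{L^\infty}$ and the embedding $H^2\cap H^1_0\hookrightarrow W^{1,\infty}$ combined with Cauchy--Schwarz in time, then split the representation \eqref{eq:solf} into the $f_0$ piece and the $z$ piece using Minkowski and a second change of variables. The only cosmetic difference is that the paper works directly in the $y$-variable via \eqref{representation_y} and invokes Jensen rather than Minkowski for the time integral of $z$, which amounts to the same computation; your intermediate bound $\exp(\tfrac{\sqrt2}{2}\|v\|)\big(\|f_0\|_{L^2}+\exp(\tfrac{\sqrt2}{2}\|v\|)\|z\|_{L^2}\big)$ matches the paper's line-by-line before the final crude estimate.
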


\begin{proof} 
Let $D\Phi(\cdot,t)$ be the spatial derivative of $\Phi(\cdot,t)$. Notice that that for every $x\in [0,1]$
\[
\partial_t D(\Phi(x,t))=D (\partial_t \Phi(x,t))=D(v(\Phi(x,t),t))
=  v_x(\Phi(x,t),t)) D\Phi(x,t).
\]
Integrating in $t$, since $D\Phi(x,0))=1$
\begin{equation} \label{eq:DPhi}
D \Phi(x,t)=\exp  \left(\int_0^t v_x(\Phi(x,s),s))\,\dd s\right) \leq \exp(\sqrt{2t} \|v\|_{L^2(0,1,H^2(0,1))}),
\end{equation}
where the constant $\sqrt{2}$ comes form the embedding $H^1_0((0,1))\hookrightarrow L^{\infty}((0,1))$.
Therefore
\begin{equation}\label{eq:DXestimate}
\exp(-\sqrt{2t} \|v\|_{L^2(0,1,H^2(0,1))})  \leq D \Phi(x,t) \leq \exp(\sqrt{2t} \|v\|_{L^2(0,1,H^2(0,1))})
\end{equation}
since for every $t\in [0,1]$
\begin{equation}\label{est:H2C1}
\left|\int_0^t v_x(\Phi(\tacka,s),s))\,\dd s \right|\leq \int_0^t \| v_x(\Phi(\tacka,s),s))\|_{L^{\infty}(0,1)}\,\dd s\leq  \sqrt{2t} \|v\|_{L^2(0,1,H^2(0,1))}.
\end{equation}

 By Lemma \ref{lem:representation_sol}, for a.e. $y\in [0,1]$, $t \in [0,1]$,
\begin{equation}\label{representation_y}
f(y,t)=f_0(\Phi(\tacka, t)^{-1}(y))+\int_0^t z(\Phi(\Phi(\tacka, t)^{-1}(y),s),s)\,\dd s.
\end{equation}
By change of variables, for the first term on the right-hand side of \eqref{representation_y}
\begin{align*}
\int_0^1 f^2_0(\Phi(\tacka, t)^{-1}(y))\,\dd y & =\int_0^1 f^2_0(\Phi(\tacka, t)^{-1}(y)) D\Phi(\tacka, t)^{-1} D\Phi(\tacka, t)\,\dd y\\
& \leq  \|f_0\|^2_{L^2(0,1)} \exp(\sqrt{2}\|v\|_{L^2(0,1,H^2(0,1))}).
\end{align*}
For the second term on the right-hand side of \eqref{representation_y}, using estimate \eqref{eq:DXestimate} twice and applying Jensen's inequality, 
\[
\left\|\int_0^t z(\Phi(\Phi(\tacka, t)^{-1},s),s)\,\dd s \right\|_{L^2(0,1)}\leq \exp(\sqrt{2}\|v\|_{L^2(0,1, H^2(0,1))})\|z\|_{L^2((0,1)^2)}
\]
Plugging the above estimates into \eqref{representation_y} we obtain
\begin{align*}
\|f(\cdot,t)\|_{L^2}&\leq \|f_0\|_{L^2(0,1)} \exp \left(\frac{\sqrt{2}}{2}\|v\|_{L^2(0,1,H^2(0,1))}\right)+\exp(\sqrt{2}\|v\|_{L^2(0,1,H^2(0,1))} ) \, \|z\|_{L^2((0,1)^2)}\\
&\leq \exp(\sqrt{2}\|v\|_{L^2(0,1,H^2(0,1))})(\|f_0\|_{L^2(0,1)}+\|z\|_{L^2((0,1)^2)}).
\end{align*}
Therefore, $
\max\limits_{0\leq t\leq 1}\|f(\tacka,t)\|_{L^2(0,1)}\leq \exp \left(\sqrt{2}\|v\|_{L^2(0,1,H^2(0,1))})(\|f_0\|_{L^2(0,1)}+\|z\|_{L^2((0,1)^2)} \right).$

\end{proof}

\begin{remark}  \label{rem:z2}
We note that we can combine Remark \ref{rem:zformula} and formula \eqref{eq:DPhi} to get another representation of the weak solution of $\partial_t z + \partial_x(vz)$. Namely $z$ is characterized by 
\[ z(\Phi(x,t),t) \, D\Phi(x,t) = z(x,0) \]
for a.e. $x,t$.
\end{remark}

\begin{lemma}\label{solution_continuity}
Given $v\in\mathcal V$, $z\in L^2((0,1)^2)$. Let $f_0\in L^2(0,1)$. If $f\in L^{2}((0,1)^2)$ is a weak solution to the initial value problem
 \[
 f(\cdot,0)=f_0,\quad\partial_t f+\partial_x f\cdot v=z,
 \]
in the sense that for every $\phi \in C^{\infty}([0,1]^2)$, 
\begin{align}
\begin{split}\label{eq:weak-IVP}
-\int_0^1\int_0^1f\partial_t\phi & \,\dd x\,\dd t-\int_0^1 f_{0}(x)\phi(x,0)\,\dd x\\
&=\int_0^1\int_0^1  f\phi\partial_x v\,\dd x\,\dd t
+\int_0^1 \int_0^1f\partial_x \phi v\,\dd x\,\dd t
+\int_0^1 \int_0^1\phi z\,\dd x\,\dd t.
\end{split}
\end{align}
Then $f\in C(0,1,H^{-1})$ and
\begin{align*}
&\|\partial_t f\|_{L^2(0,1,H^{-1})} 
\leq \left[1 + \| v \|_{L^2(0,1, H^2)  }\exp(\sqrt{2}\|v\|_{L^2(0,1,H^2)}) \right](\|f_0\|_{L^2(0,1)}+\|z\|_{L^2((0,1)^2)}).
\end{align*}
\end{lemma}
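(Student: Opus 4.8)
The plan is to establish the two assertions of Lemma~\ref{solution_continuity} in turn: first the bound on $\|\partial_t f\|_{L^2(0,1,H^{-1})}$, and then the continuity $f \in C(0,1,H^{-1})$, which will follow from the bound together with the fact that $f \in L^2((0,1)^2) \hookrightarrow L^2(0,1,H^{-1})$ via a standard interpolation/Aubin--Lions-type argument. The starting point for the first assertion is the PDE itself, read as an identity in $H^{-1}$ for a.e.\ $t$: since $\partial_t f = -\partial_x f \cdot v + z = -\partial_x(fv) + f\,\partial_x v + z$, and the term $-\partial_x(fv)$ makes sense in $H^{-1}$ because $fv \in L^1$ (indeed $f(\cdot,t) \in L^2$ and $v(\cdot,t) \in H^2 \hookrightarrow L^\infty$ so $fv(\cdot,t) \in L^2$), I can bound $\|\partial_t f(\cdot,t)\|_{H^{-1}}$ pointwise in $t$ by estimating each of the three pieces.

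Concretely, for a test function $\phi \in H^1_0(0,1)$ with $\|\phi\|_{H^1_0} \le 1$, I would write $\langle \partial_t f(\cdot,t), \phi\rangle = \int_0^1 f v \,\partial_x\phi\,\dd x + \int_0^1 f\,(\partial_x v)\,\phi\,\dd x + \int_0^1 z\,\phi\,\dd x$. The first term is bounded by $\|fv\|_{L^2}\|\partial_x\phi\|_{L^2} \le \|f(\cdot,t)\|_{L^2}\|v(\cdot,t)\|_{L^\infty}$; the second by $\|f(\cdot,t)\|_{L^2}\|\partial_x v(\cdot,t)\|_{L^2}\|\phi\|_{L^\infty} \lesssim \|f(\cdot,t)\|_{L^2}\|v(\cdot,t)\|_{H^2}$ using $H^1_0 \hookrightarrow L^\infty$ with the constant $\sqrt 2$ already used in the paper; the third by $\|z(\cdot,t)\|_{L^2}$. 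Thus $\|\partial_t f(\cdot,t)\|_{H^{-1}} \lesssim (1 + \|v(\cdot,t)\|_{H^2})\|f(\cdot,t)\|_{L^2} + \|z(\cdot,t)\|_{L^2}$. Now I take the $L^2(0,1)$ norm in $t$: by Lemma~\ref{f_n_energy_est}, $\|f(\cdot,t)\|_{L^2}$ is bounded uniformly in $t$ by $\exp(\sqrt 2\|v\|_{L^2(0,1,H^2)})(\|f_0\|_{L^2}+\|z\|_{L^2((0,1)^2)})$, so the term $(1+\|v(\cdot,t)\|_{H^2})\|f(\cdot,t)\|_{L^2}$ has $L^2_t$ norm at most $(1 + \|v\|_{L^2(0,1,H^2)})\exp(\sqrt 2\|v\|_{L^2(0,1,H^2)})(\|f_0\|_{L^2}+\|z\|_{L^2((0,1)^2)})$ (using that $\|1\|_{L^2(0,1)} = 1$ and Cauchy--Schwarz on the product with $\|v(\cdot,t)\|_{H^2}$), and the $z$ term contributes $\|z\|_{L^2((0,1)^2)}$. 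Collecting these gives exactly the claimed inequality.

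For the continuity statement, the point is that $f$, a priori only in $L^2((0,1)^2) = L^2(0,1,L^2(0,1))$, has distributional time derivative $\partial_t f \in L^2(0,1,H^{-1})$ by the bound just proved; since $L^2(0,1) \hookrightarrow H^{-1}(0,1)$ continuously, $f \in L^2(0,1,H^{-1})$ with $\partial_t f \in L^2(0,1,H^{-1})$, and the standard lemma that a function in $L^2$ of an interval valued in a Hilbert space with $L^2$ distributional derivative in the same space is (a.e.\ equal to) a continuous, indeed absolutely continuous, representative (see e.g.\ the identity $\|u(t)\|^2 = \|u(s)\|^2 + 2\int_s^t \langle u'(\tau), u(\tau)\rangle\,\dd\tau$) yields $f \in C([0,1],H^{-1})$. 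This also reconciles with the definition of $\mathcal A(f_0,f_1)$ where $C([0,1],H^{-1})$ membership was imposed. The main obstacle — though a mild one — is the bookkeeping of the embedding constants and making sure the weak formulation \eqref{eq:weak-IVP} really does identify $\partial_t f$ with the stated distribution in $H^{-1}$ for a.e.\ $t$ (i.e.\ localizing the space-time weak form to a time-slice weak form); this is routine but must be done carefully by choosing test functions of product form $\phi(x)\psi(t)$ and varying $\psi$.
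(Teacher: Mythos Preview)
Your proposal is correct and follows essentially the same route as the paper: define $\partial_t f(\cdot,t)$ as a functional on $H^1$ via the pairing $\phi \mapsto \int_0^1 (fv\,\partial_x\phi + f\,\partial_x v\,\phi + z\,\phi)\,\dd x$, bound it using the Sobolev embedding $H^1\hookrightarrow L^\infty$, invoke Lemma~\ref{f_n_energy_est} to control $\|f\|_{L^\infty(0,1,L^2)}$, and then conclude $f\in C([0,1],H^{-1})$ from $f\in L^2(0,1,L^2)$ together with $\partial_t f\in L^2(0,1,H^{-1})$. One small bookkeeping slip: both transport terms are bounded by (a constant times) $\|v(\cdot,t)\|_{H^2}\|f(\cdot,t)\|_{L^2}$, so the pointwise bound should read $\|\partial_t f(\cdot,t)\|_{H^{-1}}\lesssim \|v(\cdot,t)\|_{H^2}\|f(\cdot,t)\|_{L^2}+\|z(\cdot,t)\|_{L^2}$ rather than with the factor $(1+\|v(\cdot,t)\|_{H^2})$; dropping that spurious ``$1$'' is exactly what yields the stated constant after taking $L^2_t$ norms.
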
 
\begin{proof}
For any $\phi \in L^2((0,1), H^1(0,1))$
\begin{align*}
\left|\int_0^1 \int_0^1  (f\partial_x  v  + z ) \right. & \left. \phi \,\dd x \dd t + \int_0^1 \int_0^1   f v  \partial_x  \phi \,\dd x \dd t \right| \\
 \leq & \int_0^1 \|f(\cdot,t) \|_{L^2} \| \partial_x  v(\cdot,t)\|_{L^2}\| \phi (\cdot,t) \|_{L^\infty}+ \|f(\cdot,t) \|_{L^2} \|v(\cdot,t)\|_{L^\infty}\|\partial_x \phi(\cdot,t)\|_{L^2} \dd t \\
&+  \| z \|_{L^2((0,1)^2)}  \|\phi \|_{L^2((0,1)^2)}\\
 \leq & \left(\sqrt{2} \|f\|_{L^\infty(0,1, L^2)} \, \|v\|_{L^2(0,1, H^1)}  +   \| z \|_{L^2((0,1)^2)} \right) \|\phi\|_{L^2(0,1, H^1)},
\end{align*}
Thus if we define $\partial_t f(t)$ to be the mapping from $H^1$ to
$\R$ defined by 
\[ \langle \partial_t f(t), g \rangle = \int_0^1 \left(f(\tacka,t)  \partial_x  v(\tacka,t)  + z(\tacka,t)  \right) g \,\dd x + \int_0^1   f(\tacka, t)  v(\tacka,t)  \partial_x  g \,\dd x \]
we see that $\partial_t f \in L^2(0,1, H^{-1}(0,1))$ and its norm is controlled by the right hand side of the inequality above. 
Furthermore from the definition of weak solution \eqref{eq:weaksol} follows that $\partial_t f$ is indeed a weak derivative in time of $f$ in $H^{-1}$. Therefore $f \in C(0,1, H^{-1})$.

Therefore, combining Lemma \ref{f_n_energy_est} and the above estimates, we derive that
\begin{align}\label{L2bound: partial_tf}
\begin{split}
\|\partial_t f\|_{L^2(0,1,H^{-1})}& \leq  \| z \|_{L^2((0,1)^2)} + \sqrt{2}\| v \|_{L^2(0,1, H^2)  }\|f\|_{L^\infty(0,1, L^2)}\\
&\leq [1 + \sqrt{2} \| v \|_{L^2(0,1, H^2)  }\exp(\sqrt{2}\|v\|_{L^2(0,1,H^2)})]\cdot(\|f_0\|_{L^2(0,1)}+\|z\|_{L^2((0,1)^2)})
\end{split}
\end{align}
\end{proof}

Now we  prove Theorem \ref{th:minimizer_existence}.
\begin{proof}[Proof of Theorem \ref{th:minimizer_existence}]
To apply the direct method of calculus of variations to show the existence of a minimizing path,  we show compactness and lower-semicontinuity.

We recall that by claim (i) of Proposition \ref{prop:properties} we know that there exists a path of finite action. 
Thus for any minimizing sequence $(f_n,v_n,z_n)$, we have that $\{v_n\}_{n\in\mathbb N}$ is bounded in $L^2(0,1,H^2 \cap H^1_0)$ and $\{z_n\}_{n\in\mathbb N}$ is bounded  in $L^2((0,1)^2)$. Since $(L^2((0,1),H^s(0,1)))^*=L^2((0,1),H^{-s}(0,1))$ for any $s$ 
, Banach-Alaoglu Theorem allows us to extract a subsequence, still denoted by $(v_n,z_n)$, such that $v_n\rightharpoonup v$ and $z_n\rightharpoonup z$ weakly in $L^{2}(0,1,H^2(0,1) \cap H^1_0(0,1))$ and $L^{2}((0,1)^2)$, respectively. 

Since $H^2(0,1)\hookrightarrow C^1(0,1)$, $\,v_n\in L^2(0,1, C^1(0,1))$. 
 By Lemma \ref{lem:representation_sol}, $f_n$ has the representation \eqref{eq:solf} with $v,z,\Phi$ replaced by $v_n,z_n,\Phi_n$ respectively.
\smallskip

We claim that along a subsequence 
\begin{equation} \label{subseq_cpt}
f_n\to f \quad \; \text{in }\; L^2(0,1, H^{-\frac12}(0,1)).
\end{equation} 

First notice that by Lemma \ref{f_n_energy_est}, $\{f_n\}_{n\in\mathbb N}$ is a bounded set in $L^{\infty}(0,1, L^2(0,1))$. There exists $f\in L^{\infty}(0,1, L^2(0,1))$ such that up to a subsequence, $f_n\overset{*}{\rightharpoonup} f$ in $L^{\infty}(0,1, L^2(0,1))$. 
By relabeling we asume that the subsequence is the whole sequence. 
In addition, 
\[
\|f\|_{L^{\infty}(0,1, L^2(0,1))}\leq\liminf\limits_{n\to\infty}\|f_n\|_{L^{\infty}(0,1, L^2(0,1))}.
\]

Applying Lemma \ref{solution_continuity} to $(f_n,v_n,z_n)$, $\{\partial_t f_n\}_{n\in\mathbb N}$ is a bounded set in $L^1(0,1,H^{-1}(0,1))$. By Lions-Aubin lemma \cite{ChJuLi14}, since $\{f_n\}_{n\in\mathbb N}$ is a bounded set in $L^2(0,1,L^2(0,1))$, and $H^{-1}\hookrightarrow H^{-\frac12}\hookrightarrow L^2$ compactly, there exists $\tilde f$ in $L^2(0,1, H^{-\frac12}(0,1))$, such that up to a subsequence (still denoted by $f_n$) such that $f_n\to \tilde f$ in $L^2(0,1, H^{-\frac12}(0,1))$. 
In fact, $\tilde f=f$. Namely $f$ is the weak * limit of $f_n$ in $L^{\infty}(0,1,L^2(0,1))$, and thus the weak limit in $L^2(0,1,H^{-\frac12}(0,1))$.
 By the uniqueness of weak limit, $\tilde f=f$. 
 
\smallskip

To summarize: there exists $f\in L^2((0,1)^2), v\in \mathcal V$ and $z\in L^2((0,1)^2)$ such that
\begin{align*}
&f_n \overset{*}{\rightharpoonup} f \quad\text{in }L^{\infty}(0,1,L^2(0,1))\\
&f_n \to f \quad\text{in }L^2(0,1,H^{-\frac12}(0,1))\\
&z_n \rightharpoonup z \quad\text{in  }L^2(0,1,L^2(0,1))\\
&v_n \rightharpoonup v \quad\text{in }L^2(0,1; H^{2}(0,1)).
\end{align*}

Now pass to the limit in \eqref{eq:weaksol}. Since $\phi\in C^{\infty}([0,1]^2)$. First consider the term $\int_0^1\int_0^1  f_n\phi \partial_x v_n\,\dd x\,\dd t$. 
\begin{align*}
\left|\int_0^1\int_0^1  f_n\phi \partial_x v_n\,\dd x\,\dd t \right.&\left.   -\int_0^1\int_0^1  f\phi \partial_x v\,\dd x\,\dd t\right |\\
\leq\, & \left|\int_0^1\int_0^1  f_n\phi \partial_x v_n\,\dd x\,\dd t-\int_0^1\int_0^1  f\phi \partial_x v_n\,\dd x\,\dd t\right |\\
&+\left|\int_0^1\int_0^1  f\phi \partial_x v_n\,\dd x\,\dd t-\int_0^1\int_0^1  f\phi \partial_x v\,\dd x\,\dd t\right |\\
\leq\, & \int_0^1 \|f_n-f\|_{H^{-\frac12}(0,1)}\|\phi \partial_x v_n\|_{H^{\frac12}(0,1)}\,\dd t+\left|\int_0^1\int_0^1  f\phi( \partial_x v_n-\partial_x v)\,\dd x\,\dd t\right |\\
\leq\, & \|f_n-f\|_{L^2((0,1),H^{-\frac12}(0,1))}\sup\limits_{n\in\mathbb N}\|\phi\partial_x v_n\|_{L^2((0,1),H^{\frac12}(0,1))}\\
&+\left|\int_0^1\int_0^1  f\phi( \partial_x v_n-\partial_x v)\,\dd x\,\dd t\right |=:\,A_1+B_1,
\end{align*}
Consider $A_1$. For every $t$,  $\|\phi \partial_xv_n\|_{H^{\frac12}}\leq C\|\phi\|_{H^{1}}\|\partial_x v_n\|_{H^{\frac12}}\leq C \|\phi\|_{C^{1}}\|\partial_x v_n\|_{H^{\frac12}}$ by multiplication of Sobolev functions (see Theorem 7.4 of \cite{https://doi.org/10.48550/arxiv.1512.07379}). Then 
\begin{align*}
\sup\limits_{n\in\mathbb N}\|\phi\partial_x v_n\|_{L^2((0,1),H^{\frac12}(0,1))}&\leq C\max\limits_{t\in [0,1]}\|\phi(t)\|_{C^1}\sup\limits_{n\in\mathbb N}\|\partial_x v_n\|_{L^2((0,1),H^{\frac12}(0,1))}\\
&\leq C_{\phi} \sup\limits_{n\in\mathbb N}\|v_n\|_{L^2((0,1),H^{2}(0,1))},
\end{align*}
where the constant $C_{\phi}$ only depends on $\phi$. Combining with $f_n \to f$ in $L^2(0,1,H^{-\frac12}(0,1))$, we obtain that $A_1\to 0$ as $n\to\infty$. $B_1\to 0$ since $\phi f\in L^2(0,1,L^2(0,1))$ which could serve as a test function for $\partial_x v_n \rightharpoonup \partial_x v$ in $L^2(0,1, H^{1}(0,1))$. 
By analogous argument, \[\left|\int_0^1\int_0^1  f_n v_n\partial_x\phi \,\dd x\,\dd t  -\int_0^1\int_0^1  fv \partial_x \phi\,\dd x\,\dd t\right |\to 0.\]

For the other terms in \eqref{eq:weaksol}, passing to the limit is straightforward. Therefore $f$ satisfies \eqref{eq:weaksol}. By Lemma \ref{solution_continuity}, $f\in C(0,1, H^{-1}(0,1))$.

\smallskip

The lower semicontinuity of the action function follows directly from the lower-semicontinuity of norms with respect to the weak convergence. Therefore $(f,v,z)$ is a minimizer.

\end{proof}
\smallskip

Showing that $d_{HV}$ is a metric on $L^2(0,1)$ is straightforward. In particular the triangle inequality is obtained by concatenating minimizers with arc-length parameterization in which 
\[ d_{HV( \kappa, \lambda, \veps)} =
\int_0^1 \sqrt{\frac12  \int_0^1   \kappa v^2 + \lambda v_x^2  + \varepsilon v_{xx}^2 + z^2    \, \dd x } \;  \dd t  = \sqrt{ A_{\kappa, \lambda, \veps}(f,v,z)} . \]
The existence of such raparameterization follows from reparameterization result which states that any admissible path $(f,v,z)$ can be reparameterized in time by any absolutely continuous diffeomorphism of $[0,1].$
The lemma is a just variation of the lemma on rescaling of distributional solutions to continuity equations (see Lemma 8.1.3 of \cite{AGS}).
\begin{lemma}\label{cfv}
Let ${\rm t}:=s\in [0,1]\to {\rm t}(s)\in [0,1]$ be strictly increasing absolutely continuous map with absolutely continuous inverse ${\rm s}:={\rm t}^{-1}$. In addition, ${\rm t}(0)=0,{\rm t}(1)=1$. Then $(f(x,t),v(x,t),z(x,t))$ is a weak solution of 
\[
\partial_t f+\partial_x f\cdot v=z \quad\text{in}\quad [0,1]\times [0,1]
\]
with boundary condition $f(\cdot,0)=f_0, f(\cdot,1)=f_1$ if and only if \[\hat f(x,s)=:f(x,{\rm t}(s)),\quad \hat v(x,s)={\rm t}'(s)v(x,{\rm t}(s)),\quad\hat z(x,s)):={\rm t}'(s)z(x,{\rm t}(s))\]
is the weak solution of
solution of 
\[
\partial_s \hat f+\nabla \hat f\cdot \hat v=\hat z \quad\text{in}\quad [0,1]\times [0,1]
\]
with boundary condition $\hat f(\cdot,0)=f_0, \hat f(\cdot,1)=f_1$.
\end{lemma}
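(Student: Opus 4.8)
The plan is to prove the equivalence by a direct change of variables in the weak formulation \eqref{eq:weaksol}, preceded by a density step, following the scheme of Lemma 8.1.3 of \cite{AGS}. First I would note that it suffices to prove one implication: the inverse map ${\rm s}={\rm t}^{-1}$ satisfies exactly the same hypotheses as ${\rm t}$, and applying the stated transformation with ${\rm s}$ in place of ${\rm t}$ to the triple $(\hat f,\hat v,\hat z)$ returns $(f,v,z)$, because ${\rm s}'({\rm t}(s))\,{\rm t}'(s)=1$ for a.e.\ $s$; this chain-rule identity, together with ${\rm t}'>0$ a.e., is a classical property of an increasing absolutely continuous bijection of $[0,1]$ whose inverse is also absolutely continuous. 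So I would only show: if $(f,v,z)$ is a weak solution of $\partial_t f+\partial_x f\cdot v=z$ with $f(\cdot,0)=f_0$, $f(\cdot,1)=f_1$, then $(\hat f,\hat v,\hat z)$ is a weak solution of the reparameterized equation with the same endpoints.

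The obstacle to simply inserting $\phi(x,t):=\psi(x,{\rm s}(t))$ (where $\psi\in C^\infty([0,1]^2)$ is a test function for the $\hat{}$-problem) into \eqref{eq:weaksol} is that this $\phi$ is only absolutely continuous in $t$, not smooth, since ${\rm s}$ is merely absolutely continuous. So the first real step is to enlarge the admissible test class: I would show that \eqref{eq:weaksol} continues to hold for every $\phi\in C([0,1]^2)$ with $\partial_x\phi\in C([0,1]^2)$ such that, for each $x$, $t\mapsto\phi(x,t)$ is absolutely continuous with $\int_0^1\esssup_x|\partial_t\phi(x,t)|\,\dd t<\infty$. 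This follows by mollifying $\phi$ in the time variable — after first extending $\phi$ outside $[0,1]$ in $t$ by the constant values $\phi(\cdot,0)$ and $\phi(\cdot,1)$ so that absolute continuity in $t$ is preserved — producing $\phi^\delta\in C^\infty$ with $\phi^\delta\to\phi$, $\partial_x\phi^\delta\to\partial_x\phi$, $\phi^\delta(\cdot,0)\to\phi(\cdot,0)$, $\phi^\delta(\cdot,1)\to\phi(\cdot,1)$ uniformly and $\partial_t\phi^\delta\to\partial_t\phi$ in $L^1((0,1),L^2(0,1))$, and then passing to the limit termwise in \eqref{eq:weaksol}. The limit passage uses that any weak solution $f$ has the representation \eqref{eq:solf} (Lemma \ref{lem:representation_sol}), hence $f\in L^\infty((0,1),L^2(0,1))$ by Lemma \ref{f_n_energy_est}, together with $v\in L^2((0,1),H^2(0,1))\hookrightarrow L^2((0,1),C^1([0,1]))$ and $z\in L^2((0,1)^2)$.

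With the enlarged test class available, I would take $\phi(x,t)=\psi(x,{\rm s}(t))$ — which has $\partial_x\phi(x,t)=\partial_x\psi(x,{\rm s}(t))$ and $\partial_t\phi(x,t)=\partial_s\psi(x,{\rm s}(t))\,{\rm s}'(t)$, and is admissible for the enlarged class since $\psi(x,\cdot)$ is Lipschitz and ${\rm s}$ is absolutely continuous — and change variables $t={\rm t}(s)$, $\dd t={\rm t}'(s)\,\dd s$, in each term of \eqref{eq:weaksol}. Using ${\rm s}({\rm t}(s))=s$, the identity ${\rm s}'({\rm t}(s))\,{\rm t}'(s)=1$ a.e., the endpoint values ${\rm t}(0)=0$, ${\rm t}(1)=1$, the definitions $\hat f=f(\cdot,{\rm t}(\cdot))$, $\hat v={\rm t}'\,v(\cdot,{\rm t}(\cdot))$, $\hat z={\rm t}'\,z(\cdot,{\rm t}(\cdot))$, and the consequence $\partial_x\hat v(x,s)={\rm t}'(s)\,\partial_x v(x,{\rm t}(s))$, I expect each term to transform precisely into its counterpart in the weak formulation of $\partial_s\hat f+\partial_x\hat f\cdot\hat v=\hat z$ tested against $\psi$: the time-derivative term absorbs the factor ${\rm s}'({\rm t}(s))\,{\rm t}'(s)=1$ and becomes $-\int_0^1\!\int_0^1\hat f\,\partial_s\psi$; the two transport terms each pick up one factor ${\rm t}'(s)$, absorbed into $\hat v$ (resp.\ into $\partial_x\hat v$); the source term picks up ${\rm t}'(s)$, absorbed into $\hat z$; and the boundary integrals are unchanged since ${\rm s}(0)=0$, ${\rm s}(1)=1$. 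Along the way I would check that every transformed integral is absolutely convergent (e.g.\ $\int_0^1{\rm t}'(s)\,\|\partial_x v(\cdot,{\rm t}(s))\|_{L^2}\,\dd s=\|\partial_x v\|_{L^1((0,1),L^2(0,1))}<\infty$ by the substitution formula). Since $\psi$ was arbitrary and $\hat f(\cdot,0)=f(\cdot,0)=f_0$, $\hat f(\cdot,1)=f(\cdot,1)=f_1$, this yields the desired implication, and the converse then follows from the symmetry noted at the start.

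I expect the only genuinely delicate point to be the legitimacy of the change of variables and of the chain-rule identity ${\rm s}'({\rm t}(s))\,{\rm t}'(s)=1$ a.e.\ when ${\rm t}$ and ${\rm s}$ are merely absolutely continuous rather than Lipschitz — this is exactly where the hypothesis that \emph{both} ${\rm t}$ and its inverse are absolutely continuous is used, and it is supplied by the classical theory of monotone absolutely continuous functions (such a bijection has ${\rm t}'>0$ a.e., and $\int g({\rm t}(s))\,{\rm t}'(s)\,\dd s=\int g(t)\,\dd t$ for $g\in L^1(0,1)$). The density/mollification step and the term-by-term matching are routine once this is in hand; one may also note that if moreover ${\rm t}$ (equivalently ${\rm s}$) is Lipschitz, then $\hat v\in\mathcal V$ and $(\hat f,\hat v,\hat z)\in\mathcal A(f_0,f_1)$, which is the case relevant to the arc-length reparameterization used for the triangle inequality.
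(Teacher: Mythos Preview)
Your proposal is correct and is precisely the argument the paper has in mind: the paper does not actually prove the lemma but merely states that it ``is just a variation of the lemma on rescaling of distributional solutions to continuity equations (see Lemma 8.1.3 of \cite{AGS}).'' Your density step (enlarging the test class to functions merely absolutely continuous in $t$ via time-mollification) followed by the change of variables $t={\rm t}(s)$ and the symmetry reduction to one implication is exactly the standard proof of that lemma adapted to the transport-with-source equation, so there is nothing to add.
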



\begin{lemma}
The distance $d_{HV}$ is metric on $L^2(0,1)$.
\end{lemma}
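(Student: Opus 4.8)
The plan is to verify the three metric axioms for $d_{HV}$ as defined in \eqref{eq:dHV}, treating nonnegativity and symmetry quickly and spending the effort on nondegeneracy and the triangle inequality. Nonnegativity is immediate, and $d_{HV}(f,f)=0$ is witnessed by the stationary triple $(f,0,0)\in\mathcal A(f,f)$, which has zero length. For symmetry, given $(f,v,z)\in\mathcal A(f_0,f_1)$ I would check that $(\hat f,\hat v,\hat z)(x,t):=(f(x,1-t),-v(x,1-t),-z(x,1-t))$ lies in $\mathcal A(f_1,f_0)$; this follows from the weak formulation \eqref{eq:weaksol} after substituting $t\mapsto 1-t$ in the test function (the reversal flips the sign of $\partial_t f$ and of $v$, so the equation is preserved). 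Since the instantaneous action integrand $\kappa v^2+\lambda v_x^2+\veps v_{xx}^2+z^2$ is even in $(v,z)$ and the outer integral $\int_0^1(\tacka)\,\dd t$ is invariant under $t\mapsto 1-t$, the length of $(\hat f,\hat v,\hat z)$ equals that of $(f,v,z)$; taking infima and using the reverse inclusion gives $d_{HV}(f_0,f_1)=d_{HV}(f_1,f_0)$.

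For nondegeneracy I would show $d_{HV}(f_0,f_1)=0\implies f_0=f_1$. Using the reparameterization identity $d_{HV}(f_0,f_1)^2=\inf_{\mathcal A(f_0,f_1)}A_{\kappa,\lambda,\veps}$ (recorded just before Lemma \ref{cfv}) together with Theorem \ref{th:minimizer_existence}, there is a minimizing path $(f,v,z)$ with $A_{\kappa,\lambda,\veps}(f,v,z)=0$, hence $v\equiv 0$ and $z\equiv 0$ a.e.; then the flow $\Phi$ of $v$ is the identity and Lemma \ref{lem:representation_sol} yields $f(x,t)=f_0(x)$ for a.e. $(x,t)$, so $f_1=f(\tacka,1)=f_0$ in $L^2$. (One can also avoid the existence theorem: along constant-speed reparameterized paths of length tending to $0$ the action tends to $0$, so $\|v_n\|_{\mathcal V}\to 0$ and $\|z_n\|_{L^2}\to 0$ while $\|f_n\|_{L^\infty(0,1,L^2)}$ stays bounded by Lemma \ref{f_n_energy_est}; Lemma \ref{solution_continuity} then gives $\|\partial_t f_n\|_{L^2(0,1,H^{-1})}\to 0$, whence $f_1-f_0=\int_0^1\partial_t f_n\,\dd t\to 0$ in $H^{-1}$.)

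For the triangle inequality, fix $f_0,f_1,f_2\in L^2(0,1)$ and $\delta>0$, and---using that the relevant infima are finite by Proposition \ref{prop:properties}(i)---choose $\gamma_1\in\mathcal A(f_0,f_1)$ and $\gamma_2\in\mathcal A(f_1,f_2)$ whose lengths are within $\delta$ of $d_{HV}(f_0,f_1)$ and $d_{HV}(f_1,f_2)$. I would then concatenate a time-compressed copy of $\gamma_1$ (reparameterized from $[0,1]$ to $[0,\tfrac12]$ via $s\mapsto 2s$, with the velocity and source scaled by $2$, as in Lemma \ref{cfv}) with a time-compressed copy of $\gamma_2$ (reparameterized onto $[\tfrac12,1]$ the same way) into a single triple $\gamma$. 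The scaling keeps the velocity in $\mathcal V=L^2((0,1),H^2\cap H^1_0)$ and the source in $L^2((0,1)^2)$; both pieces take the value $f_1$ at the joint $s=\tfrac12$, and since the $f$-component of each piece is continuous into $H^{-1}$, the glued $f$ is continuous in $H^{-1}$ and satisfies \eqref{eq:weaksol} on all of $[0,1]$---splitting $\int_0^1(\tacka)\,\dd t$ at $\tfrac12$ and applying the weak formulation to each half, the boundary contributions at $s=\tfrac12$ cancel. Hence $\gamma\in\mathcal A(f_0,f_2)$. Because the length functional is reparameterization invariant (the factor ${\rm t}'(s)$ in $\hat v={\rm t}'(s)v$ is exactly absorbed by the change of variables in the outer integral), the length of $\gamma$ equals that of $\gamma_1$ plus that of $\gamma_2$, so $d_{HV}(f_0,f_2)\le d_{HV}(f_0,f_1)+d_{HV}(f_1,f_2)+2\delta$; letting $\delta\to 0$ completes the proof.

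I expect the only genuinely delicate step to be verifying that the concatenated triple $\gamma$ is a \emph{weak} solution across the joint $t=\tfrac12$ in the sense of \eqref{eq:weaksol}; this rests on the $C([0,1],H^{-1})$ regularity from Lemma \ref{solution_continuity} and is otherwise routine. Everything else---the sign bookkeeping for symmetry, the scaling identities for reparameterization, and the reduction of nondegeneracy to the vanishing of the action---is bookkeeping with the definitions and the lemmas already established.
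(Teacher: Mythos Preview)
Your proof is correct and follows essentially the same strategy as the paper: nondegeneracy via the existence of action minimizers (Theorem \ref{th:minimizer_existence}) combined with the representation formula, symmetry by time reversal, and the triangle inequality by concatenation of time-rescaled paths. The only minor differences are that you use $\delta$-approximate minimizers for the triangle inequality where the paper invokes exact minimizers, and you are more explicit about why the concatenated path is a weak solution across the joint; both are cosmetic.
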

\begin{proof}
We claim that the positivity follows from the existence of minimizers in Proposition \ref{th:minimizer_existence}. If $f_0=f_1$ in $L^2(0,1)$, it is obvious that $d_{HV}(f_0,f_1)=0$.
Consider the case $f_0\neq f_1$ in $L^2(0,1)$.
Observe that for $(f,v,z)\in\mathcal A$, we could reparameterize by arc length 
in time to make the quantity $\left(\int_0^1 \kappa v^2(x,s)+\lambda v^2_x(x,s)+ \varepsilon v^2_{xx}(x,s)+ z^2(x,s)\,\dd x \right)^{\frac12}$ constant in $s$,with the reparameterization denoted by $(\hat f,\hat v,\hat z)$. In particular, the reparameterization and its inverse are almost everywhere differentiable. Applying Lemma \ref{cfv} and changing variables, by Jensen's inequality we have
\[
A_{(\kappa,\lambda,\varepsilon)}(\hat f,\hat v,\hat z)\leq A_{(\kappa,\lambda,\varepsilon)}(f,v,z).
\]


According to Jensen's inequality, if $(f,v,z)$ is a minimizer of $A(f,v,z)$ then 
it is simultaneously the minimizer of $\int_0^1 \sqrt{\frac12  \int_0^1   \kappa v^2 + \lambda v_x^2  + \varepsilon v_{xx}^2 + z^2    \, \dd x } \;  \dd t$. The existence of minimizer implies that $d_{HV}(f_0,f_1)>0$, since otherwise the minimizer $f(x,t)$ is constant and $f_0\equiv f(\cdot,t)\equiv f_1$ which yields contradiction.

The symmetry is direct from the definition. The triangle inequality follows from path concatenation. Consider $f_0, f_1, f_2 \in L^2(0,1)$. Let $(f_{01},v_{01},z_{01})$ be the minimizing path between $f_0$ and $f_1$ and let $(f_{12},v_{12},z_{12})$ be the minimizing path between $f_1$ and $f_2$. Define
\[  \tilde f(t) = \begin{cases} f_{01}(2t) \quad & \te{for } t \in [0,\frac12] \\
f_{12}(2t-1)  & \te{for } t \in (\frac12,1].
\end{cases}\]
$\tilde v(t)$  and $\tilde z(t)$ are defined analogously. It immediately gives that $d_{HV}(f_0, f_2)\leq d_{HV}(f_0, f_1) + d_{HV}(f_1, f_2)$.
\end{proof}

\subsection{Completeness of the $d_{HV}$ metric space and the  characterization of its topology.} \label{sec:complete}
Thus far, we have shown that $d_{HV}$ is a metric on $L^2$. We now establish completeness.

\begin{proposition} \label{lem:complete}
The metric space $(L^2(0,1), d_{HV})$ is complete. 
\end{proposition}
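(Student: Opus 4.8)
# Proof Proposal for Proposition (Completeness of $(L^2(0,1), d_{HV})$)

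The plan is to exploit the comparison $d_{HV}(f_0,f_1) \le \|f_0 - f_1\|_{L^2}$ from Proposition \ref{prop:properties}(i) in one direction, and a \emph{reverse} control of the $L^2$ distance by $d_{HV}$ along bounded portions of a Cauchy sequence in the other direction. Let $\{f_n\}$ be $d_{HV}$-Cauchy; it is in particular $d_{HV}$-bounded. The first key step is to show that $d_{HV}$-bounded sets are $L^2$-bounded: given an (almost) minimizing path from a fixed reference signal $g$ to $f_n$ with action close to $d_{HV}(g,f_n)^2$, the arc-length reparameterization (Lemma \ref{cfv}) gives $\|v_n\|_{L^2(0,1,H^2)}^2 + \|z_n\|_{L^2((0,1)^2)}^2 \le 2 d_{HV}(g,f_n)^2 + 1$, say, and then Lemma \ref{f_n_energy_est} applied at $t=1$ yields $\|f_n\|_{L^2} \le \exp(\sqrt2 \|v_n\|_{L^2(0,1,H^2)})(\|g\|_{L^2} + \|z_n\|_{L^2}) \le C$ uniformly in $n$. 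So $\{f_n\}$ lies in an $L^2$-ball of radius $R$.

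The second, central step is to establish a local comparison of the form: there is a constant $c = c(R) > 0$ such that for all $f_0, f_1$ in the $L^2$-ball of radius $R$ one has $\|f_0 - f_1\|_{L^2} \le \Psi(d_{HV}(f_0,f_1))$ for some modulus $\Psi$ with $\Psi(0)=0$ (continuous, increasing). To get this, take a near-minimizing arc-length-parameterized path $(f,v,z)$ from $f_0$ to $f_1$ with $A(f,v,z) \le d_{HV}(f_0,f_1)^2 + \delta$. From Lemma \ref{f_n_energy_est} the whole path stays in an $L^2$-ball whose radius depends only on $R$ and $d_{HV}(f_0,f_1)$, hence on $R$ alone if we first restrict to $d_{HV}(f_0,f_1) \le 1$. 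Next control $\|f_1 - f_0\|_{H^{-1}}$: since $\partial_t f \in L^2(0,1,H^{-1})$ with $\|\partial_t f\|_{L^2(0,1,H^{-1})} \le C(R)(\|v\|_{L^2(0,1,H^2)} + \text{lower order})$ from Lemma \ref{solution_continuity}, and $\|v\|_{L^2(0,1,H^2)}, \|z\|_{L^2}$ are bounded by $\sqrt{2(d_{HV}^2+\delta)}$, we get $\|f_1 - f_0\|_{H^{-1}} \le \int_0^1 \|\partial_t f\|_{H^{-1}} \, \dd t \le C(R)\, \big(\sqrt{d_{HV}(f_0,f_1)^2 + \delta}\big)^{1/2}$ — more carefully, $\|\partial_t f\|_{H^{-1}} \le \|z\|_{L^2} + \sqrt2 \|v\|_{H^2}\|f\|_{L^2}$ pointwise in $t$, so $\|f_1-f_0\|_{H^{-1}} \le \|z\|_{L^2((0,1)^2)} + \sqrt2 \|f\|_{L^\infty(L^2)} \|v\|_{L^2(0,1,H^2)} \le C(R) \sqrt{d_{HV}(f_0,f_1)^2 + \delta}$. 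Letting $\delta \to 0$ gives $\|f_1 - f_0\|_{H^{-1}} \le C(R)\, d_{HV}(f_0,f_1)$. Finally, upgrade from $H^{-1}$ to $L^2$: since $\|f_0-f_1\|_{L^2} \le 2R$ and interpolation gives $\|f_0-f_1\|_{L^2} \le \|f_0-f_1\|_{H^{-1}}^{1/3}\|f_0-f_1\|_{H^1}^{2/3}$ — but $f_0 - f_1$ need not be in $H^1$. Instead I will use the elementary interpolation $\|h\|_{L^2}^2 \le \|h\|_{H^{-1}}\|h\|_{H^1}$ only when available; in general, for $h \in L^2$ with $\|h\|_{L^2} \le M$, mollification at scale $\eta$ gives $\|h\|_{L^2} \le \|h - h_\eta\|_{L^2} + \|h_\eta\|_{L^2} \le \omega_M(\eta) + \eta^{-1}\|h\|_{H^{-1}}$, where $\omega_M(\eta) \to 0$ uniformly on $L^2$-balls is false — so instead exploit that $\{f_n\}$ is $L^2$-\emph{Cauchy} will not follow from a ball bound alone. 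I therefore take the cleaner route: a bounded subset of $L^2$ that is Cauchy in $H^{-1}$ (or $H^{-1/2}$) need not be Cauchy in $L^2$, so the $H^{-1}$ estimate alone is insufficient, and the modulus argument must be done differently.

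The correct second step is thus: show $d_{HV}$-Cauchy $\Rightarrow$ $L^2$-Cauchy directly using the existence of minimizers (Theorem \ref{th:minimizer_existence}) and the representation formula \eqref{eq:f-formula}. Given $\epsilon > 0$, pick $N$ so that $d_{HV}(f_n,f_m) < \epsilon$ for $n,m \ge N$; let $(f,v,z)$ be a minimizing arc-length path from $f_n$ to $f_m$, so $\|v\|_{L^2(0,1,H^2)}^2 + \|z\|_{L^2((0,1)^2)}^2 \le 2\epsilon^2$. By \eqref{eq:DXestimate} the flow $\Phi(\cdot,1)$ is bi-Lipschitz with constant $e^{\sqrt2\|v\|} \le e^{2\epsilon}$, hence $\|f_m\circ\Phi(\cdot,1) - f_m\|_{L^2}$ is small when $\epsilon$ is small \emph{provided $f_m$ has an $L^2$-modulus of continuity of translations/deformations} — which again fails uniformly on $L^2$-balls. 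The genuine resolution: $(L^2, d_{HV})$ is complete because $d_{HV}$ and $\|\cdot\|_{L^2}$ induce the \emph{same topology} on $L^2$ — this is exactly Theorem \ref{conv_L2}, whose proof must precede or accompany this one. I will therefore cite that the $d_{HV}$-topology equals the $L^2$-topology (proved via: $d_{HV} \le \|\cdot\|_{L^2}$ gives one inclusion; for the other, if $d_{HV}(f_n, f) \to 0$ then the minimizing paths have vanishing action, and by compactness as in Theorem \ref{th:minimizer_existence} together with the lower-semicontinuity one extracts that $f_n \to f$ in $L^2$, arguing by contradiction). Granting that, a $d_{HV}$-Cauchy sequence $\{f_n\}$ is $L^2$-bounded (first step), hence has an $L^2$-weakly convergent subsequence $f_{n_k} \rightharpoonup f$; using the $d_{HV}$-Cauchy property and lower-semicontinuity of $d_{HV}$ one shows $d_{HV}(f_{n_k}, f) \to 0$, and then the topology equivalence upgrades this to $f_{n_k} \to f$ in $L^2$; a Cauchy sequence with a convergent subsequence converges, so $f_n \to f$ in $L^2$ and hence in $d_{HV}$.

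\medskip

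The main obstacle is the reverse comparison: controlling $\|f_0 - f_1\|_{L^2}$ (or even just showing $L^2$-Cauchy) from $d_{HV}$, because no uniform modulus of continuity is available on $L^2$-balls. The clean way around it, which I would adopt, is to first prove (or invoke) that $d_{HV}$ metrizes the $L^2$ topology on $L^2$-bounded sets — concretely, that $d_{HV}(g_n, g) \to 0$ with $\sup_n \|g_n\|_{L^2} < \infty$ forces $g_n \to g$ in $L^2$ — via a contradiction-and-compactness argument mirroring the proof of Theorem \ref{th:minimizer_existence}: near-minimizing paths from $g_n$ to $g$ have action $\to 0$, so $v_n \to 0$ in $L^2(0,1,H^2)$ and $z_n \to 0$ in $L^2((0,1)^2)$; by Lemma \ref{f_n_energy_est} and \eqref{eq:f-formula} the path collapses, forcing $\|g_n - g\|_{H^{-1/2}} \to 0$; combined with $L^2$-boundedness and, if $g_n \not\to g$ in $L^2$, passing to a subsequence with $g_n \rightharpoonup g' \ne g$ weakly in $L^2$ while $g_n \to g$ in $H^{-1/2}$ forces $g' = g$, and a further extraction handling the norms (using that along near-geodesics $\|g_n\|_{L^2}$ can be compared to $\|g\|_{L^2}$ up to $e^{\sqrt2\|v_n\|} \to 1$ factors and the $\|z_n\| \to 0$ correction) yields $\|g_n\|_{L^2} \to \|g\|_{L^2}$, hence $g_n \to g$ strongly in $L^2$ — contradiction. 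With this lemma in hand, completeness follows from the three-line argument above: $L^2$-boundedness (first step), weak-$L^2$ subsequential limit, lower-semicontinuity of $d_{HV}$ to identify the limit, the metrization lemma to upgrade to strong $L^2$, and Cauchy-plus-convergent-subsequence.
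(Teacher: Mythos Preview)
Your core plan coincides with the paper's: show $L^2$-boundedness via Lemma~\ref{f_n_energy_est} applied to a minimizing path from a fixed reference to $f_n$; extract a weak $L^2$ subsequential limit $g_\infty$; and use lower-semicontinuity of $d_{HV}$ with respect to weak $L^2$ convergence in one endpoint to get $d_{HV}(f_m,g_\infty)\le \liminf_k d_{HV}(f_m,f_{n_k})$, which tends to $0$ as $m\to\infty$ by the Cauchy property. At that point you are finished --- you have $f_m\to g_\infty$ in $d_{HV}$. The subsequent detour through the metrization statement ($d_{HV}\Rightarrow L^2$, i.e.\ Theorem~\ref{conv_L2}) is superfluous for completeness, and indeed in the paper that theorem is established \emph{after} Proposition~\ref{lem:complete}, not before.

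The one genuine gap in your write-up is that you invoke ``lower-semicontinuity of $d_{HV}$'' as if it were already available; this is in fact the substance of the proof. The paper obtains it by exactly the mechanism you sketch for the metrization lemma, but applied at this step instead: take the minimizing paths $(g_{1,n},v_{1,n},z_{1,n})\in\mathcal A(g_1,g_n)$, extract weak limits $(g_{1,\infty},v_{1,\infty},z_{1,\infty})$ by the same compactness/Lions--Aubin argument as in the proof of Theorem~\ref{th:minimizer_existence}, verify that the limit path lies in $\mathcal A(g_1,g_\infty)$ (the endpoint is identified because $g_n\rightharpoonup g_\infty$ passes to the limit in the boundary term of~\eqref{eq:weaksol}), and then use LSC of the action under these weak convergences. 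The paper then repeats the argument with $g_1$ replaced by the $n$-th element of a fast Cauchy subsequence to conclude $d_{HV}(\tilde g_n,g_\infty)<1/n$. So your path-compactness idea is correct but misplaced: it belongs at the LSC step, not at the (unneeded) metrization step.
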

\begin{proof}


Notice that $d_{HV( \kappa, \lambda, \veps)}\asymp d_{HV( 1, 1, 1)}$, it suffices to consider $ d_{HV( 1, 1, 1)}$.

 Let $\{g_n\}_{n\in\mathbb N} \subset L^{2}(0,1)$ be a Cauchy sequence in $d_{HV}$. Without a loss of generality, we assume that for all $n\in\mathbb N$, $d_{HV}(g_1,g_n)\leq 1$.
By Proposition \ref{th:minimizer_existence},  for $n>1$, let $(g_{1,n},v_{1,n},z_{1,n})$ denotes the minimizer of the action $A(f,v,z)$ with the admissible set $\mathcal A(g_1,g_n)$. Then Lemma \ref{f_n_energy_est} and the definition of $d_{HV}$ give that
\begin{align*}
\|g_n\|_{L^2(0,1)}&\leq \exp(\sqrt{2}\|v_{1,n}\|_{L^2(0,1,H^2(0,1))})(\|g_1\|_{L^2(0,1)}+\|z_{1,n}\|_{L^2((0,1)^2)})\\
&\leq \exp(\sqrt{2}d_{HV}(g_1,g_n))(\|g_1\|_{L^2(0,1)}+d_{HV}(g_1,g_n))\\
&\leq \exp(\sqrt{2})(\|g_1\|_{L^2(0,1)}+1).
\end{align*}
Then Banach-Alaoglu Theorem gives that there exists a subsequence $g_{n_k}$ converges to some $g_{\infty}\in L^2(0,1)$ weakly in $L^2(0,1)$. We denote the subsequence as $\{\tilde g_n\}_{n\in\mathbb N}$.

On the other hand, there exists further subsequences (which we relabel to be the original  subsequence) $\{v_{1,n}\}$ ,$\{z_{1,n}\}$ and $v_{1,\infty}, z_{1,\infty}$ such that 
\begin{align*}
&v_{1,n}\rightharpoonup v_{1,\infty},\quad \text{weakly in\:} L^2(0,1,H^2(0,1)),\\
&z_{1,n}\rightharpoonup z_{1,\infty},\quad \text{weakly in\:} L^2((0,1)^2).
\end{align*}

Meanwhile, by Lemma \ref{solution_continuity}, $\{\partial_t g_{1,n}\}_{n\in\mathbb N}$  is a bounded set in $L^2(0,1, H^{-1}(0,1))$. Then Lions-Aubin Lemma gives that up to a subsequence $g_{1,n}\to g_{1,\infty}$ in $L^2(0,1,H^{-\frac12}(0,1))$. Moreover we have $g_{1,n}\rightharpoonup g_{1,\infty}$ weakly * in $L^{\infty}(0,1,L^2(0,1))$.

Up to a subsequence,
\begin{align*}
 g_{n} & \rightharpoonup g_{\infty} \quad\text{in }L^{2}(0,1)\\
g_{1,n} & \overset{*}{\rightharpoonup} g_{1,\infty} \quad\text{in }L^{\infty}(0,1,L^2(0,1))\\
g_{1,n} & \to g_{1,\infty}  \quad\text{in }L^2(0,1,H^{-\frac12}(0,1))\\
z_{1,n} & \rightharpoonup z_{1,\infty}\quad\text{in  }L^2(0,1,L^2(0,1))\\
v_{1,n} & \rightharpoonup v_{1,\infty} \quad\text{in }L^2(0,1, H^{2}(0,1)).
\end{align*} 
By a similar argument as in the proof of Theorem  \ref{th:minimizer_existence}, we could verify that $(g_{1,\infty},v_{1,\infty},z_{1,\infty})$ is in the admissible set $\mathcal A(g_1,g_{\infty})$. Thus


\begin{align*}
d^2_{HV}(g_1,g_{\infty})
&\leq \liminf\limits_{n\to\infty}\|v_{1,n}\|^2_{L^2(0,1,H^2(0,1))}+ \liminf\limits_{n\to\infty}\|z_{1,n}\|^2_{L^2((0,1)^2)}\leq  \liminf\limits_{n\to\infty} d^2_{HV}(g_1,g_n).
\end{align*}
Then without loss of generality, we could take a further sequence in $\{\tilde g_n\}_{n\in\mathbb N}$ such that for every $n$, for any $m\geq n$, $d_{HV}(\tilde g_{m},\tilde g_n)<\frac1n$. We repeat the above arguments with $g_1$ replaced by $\tilde g_n$. Then we have
\[
d^2_{HV}(\tilde g_n,g_{\infty})\leq \liminf\limits_{m\to\infty} d^2_{HV}(\tilde g_n,\tilde g_m)<\frac{1}{n^2}.
\]
Now we have a subsequence  $\{\tilde g_n\}_{n\in\mathbb N}$ that converges to $g_{\infty}$ in $d_{HV}$. For any $\delta>0$, there exists $K_1\in\mathbb N$ such that $\frac{1}{K_1}<\frac{\delta}{2}$. And there exists $K_2>0$ such that for all $m_1,m_2>K_2$, $d_{HV(g_{m_1},g_{m_2})}<\frac{\delta}{2}$. Choose $N=\max\{K_1,K_2\}$,
\[
d_{HV}(g_n,g_{\infty})\leq d_{HV}(g_n,\tilde g_{N})+d_{HV}(\tilde g_N,g_{\infty})<\delta,
\]
which gives that $g_n\to g_{\infty}$ in $d_{HV}$.
\end{proof}
\medskip

Our next goal is to show that convergence in $d_{HV}$ implies convergence in $L^2$. Towards that goal we first prove the following estimate.
\begin{lemma}\label{ineq:f1g1}
For any $f_0,f_1\in L^2(0,1)$ and $g_0\in C^{\infty}$. Let $(f,v,z)$ be a minimizer of the action with the admissible set $\mathcal A(f_0,f_1)$ and  $\psi:=\Phi(\cdot,1)$, where $\Phi(x,t)$ is the flow map of $v(x,t)$. Define $g_1=g_0\comp\psi^{-1}$.

Then $g_1\in C^1$, satisfies $\int_0^1 |g_1'| \dd x  =\int_0^1 |g_0'| \dd x$ and
\begin{align}\label{ineq:fg_approx}
\|f_1-g_1\|_{L^2}
\leq \exp(\sqrt{2} d_{HV}(f_0,f_1))\, \left [\|f_0-g_0\|_{L^2}+d_{HV}(f_0,f_1)\right ].
\end{align}
\end{lemma}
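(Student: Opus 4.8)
The plan is to exploit the Lagrangian representation formulas from Lemma~\ref{lem:representation_sol} and Remark~\ref{rem:zformula}. First I would note that $\psi = \Phi(\cdot,1)$ is a $C^1$-diffeomorphism of $[0,1]$ (since $v \in \mathcal V \subset L^2(0,1,C^1)$, the flow map is $C^1$ in space with $D\Phi > 0$ by~\eqref{eq:DPhi}), so $g_1 := g_0 \circ \psi^{-1}$ is well-defined and $C^1$. The invariance $\int_0^1 |g_1'|\,\dd x = \int_0^1 |g_0'|\,\dd x$ then follows immediately from the change of variables $y = \psi(x)$, since $g_1'(\psi(x))\psi'(x) = g_0'(x)$ gives $\int_0^1 |g_1'(y)|\,\dd y = \int_0^1 |g_0'(\psi(x))|\,\psi'(x)\,\dd x = \int_0^1 |g_0'(x)|\,\dd x$ (using $\psi' = D\Phi(\cdot,1) > 0$).

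For the main estimate~\eqref{ineq:fg_approx}, the idea is to compare $f$ and a \emph{companion path} $g$ transported by the \emph{same} velocity field $v$ but with zero source, starting from $g_0$: that is, $g(\Phi(x,t),t) := g_0(x)$, so $g(\cdot,1) = g_0 \circ \psi^{-1} = g_1$. Then by Lemma~\ref{lem:representation_sol} applied to $f$ we have $f(\Phi(x,1),1) = f_0(x) + \int_0^1 z(\Phi(x,s),s)\,\dd s$, hence
\begin{equation*}
(f_1 - g_1)(\psi(x)) = (f_0 - g_0)(x) + \int_0^1 z(\Phi(x,s),s)\,\dd s.
\end{equation*}
Taking the $L^2$ norm in $x$, changing variables $y = \psi(x)$ (picking up a factor $\psi'(x)^{-1/2} = D\Phi(x,1)^{-1/2}$, which by~\eqref{eq:DXestimate} is bounded by $\exp(\tfrac{\sqrt2}{2}\|v\|_{L^2(0,1,H^2)})$), and then controlling the source term exactly as in the proof of Lemma~\ref{f_n_energy_est} — using~\eqref{eq:DXestimate} and Jensen's inequality so that $\|\int_0^1 z(\Phi(\cdot,s),s)\,\dd s\|_{L^2} \le \exp(\sqrt2\|v\|_{L^2(0,1,H^2)})\|z\|_{L^2((0,1)^2)}$ — yields
\begin{equation*}
\|f_1 - g_1\|_{L^2} \le \exp(\sqrt2\|v\|_{L^2(0,1,H^2)})\big(\|f_0 - g_0\|_{L^2} + \|z\|_{L^2((0,1)^2)}\big).
\end{equation*}
Finally I would invoke that $(f,v,z)$ is an action minimizer, together with the arc-length reparameterization, so that $\|v\|_{L^2(0,1,H^2)}^2 \lesssim 2A_{\kappa,\lambda,\veps}(f,v,z) = 2\,d_{HV}^2(f_0,f_1)$ and similarly $\|z\|_{L^2((0,1)^2)} \le \sqrt{2A} = \sqrt2\,d_{HV}(f_0,f_1)$; more precisely, for $d_{HV(1,1,1)}$ one gets $\|v\|_{L^2(0,1,H^2)} \le d_{HV}(f_0,f_1)$ and $\|z\|_{L^2} \le d_{HV}(f_0,f_1)$ directly, giving~\eqref{ineq:fg_approx}.

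The main obstacle I anticipate is making the change-of-variables bookkeeping clean: one must carefully track the Jacobian $D\Phi(x,1)$ when converting between the Lagrangian variable $x$ and the Eulerian variable $y = \psi(x)$, and ensure the constant that emerges is exactly $\exp(\sqrt2\,d_{HV})$ rather than something larger — this requires combining the $\exp(\tfrac{\sqrt2}{2}\|v\|)$ from the Jacobian with the source-term estimate and absorbing both into $\exp(\sqrt2\,d_{HV})$, which works since $\exp(\tfrac{\sqrt2}{2}\|v\|) \le \exp(\sqrt2\|v\|)$ and $\|v\|_{L^2(0,1,H^2)} \le d_{HV}$ after arc-length reparameterization. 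A minor secondary point is justifying that $g$ (transport with zero source along $v$) is a legitimate path with the claimed endpoint and that $g_1 \in C^1$, which follows from the $C^1$ regularity of $\Phi(\cdot,1)$ and its inverse.
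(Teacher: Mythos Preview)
Your proposal is correct and follows essentially the same approach as the paper: use the Lagrangian representation from Lemma~\ref{lem:representation_sol} to write $(f_1-g_1)\circ\psi = (f_0-g_0) + \int_0^1 z(\Phi(\cdot,s),s)\,\dd s$, then change variables and invoke the Jacobian bound~\eqref{eq:DXestimate} and the source-term estimate from the proof of Lemma~\ref{f_n_energy_est}, finally bounding $\|v\|_{L^2(0,1,H^2)}$ and $\|z\|_{L^2}$ by $d_{HV}(f_0,f_1)$ since $(f,v,z)$ is a minimizer. Your ``companion path'' $g(\Phi(x,t),t)=g_0(x)$ is just a conceptual repackaging of the paper's direct identity $g_1(y)=g_0(\psi^{-1}(y))$; the calculations are identical.
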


\begin{proof}



$\psi^{-1}$ is a $C^1$ diffeomorphism 
immediately gives that $g_1\in C^1$ and $\int_0^1 |g_0'|\,\dd x=\int_0^1 |g_1'|\,\dd x$. Recall that by \eqref{eq:DXestimate}, we have 
\[
\left|D \Phi(\cdot,t) \right|\leq \exp(\sqrt{2t}\|v\|_{L^2(0,1,H^2(0,1))}),\quad \forall t\in [0,1].
\]
In particular, 
\begin{equation} \label{est:detDpsi}
|D \psi|\leq \exp(\sqrt{2}\|v\|_{L^2(0,1,H^2(0,1))}).
\end{equation}

By Lemma \ref{lem:representation_sol}, for a.e. $y\in [0,1]$,
\[
f_1(y)-g_1(y)= f_0(\psi^{-1}(y))+\int_0^1 z(\Phi(\psi^{-1}(y),s),s)\,\dd s-g_0(\psi^{-1}(y)).
\]
By an analogous estimate as in the proof of Lemma \ref{f_n_energy_est}, we have
\begin{align}\label{f1g1est_0}
\begin{split}
\|f_1-g_1\|_{L^2}&\leq \left\|\int_0^1 z(\Phi(\psi^{-1}(\tacka),s),s)\,\dd s \right\|_{L^2}+\|f_0\comp \psi^{-1}-g_0\comp \psi^{-1}\|_{L^2}\\
&\leq \exp(\sqrt{2}\|v\|_{L^2(0,1, H^2(0,1))})\|z\|_{L^2((0,1)^2)}+\exp\left(\frac{\sqrt{2}}{2}\|v\|_{L^2(0,1,H^2(0,1))} \right)\|f_0-g_0\|_{L^2}\\
&\leq \exp(\sqrt{2} d_{HV}(f_0,f_1))\left [ \|f_0-g_0\|_{L^2}+  d_{HV}(f_0,f_1)\right].
\end{split}
\end{align}



\end{proof}

\begin{theorem}\label{conv_L2}
Let $\{f_n\}_{n\in\mathbb N}\subseteq L^2(0,1)$. If $f_n\to f$ in $d_{HV}$, then $f_n\to f$ in $L^2$.
\end{theorem}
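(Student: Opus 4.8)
The plan is to show that $d_{HV}$-convergence controls the $L^2$ distance, using the a priori estimate in Lemma \ref{ineq:f1g1} together with a density argument. First I would reduce to controlling $\|f_n - f\|_{L^2}$ via an intermediate smooth approximation: fix $\delta > 0$ and pick $g_0 \in C^\infty$ with $\|f - g_0\|_{L^2} < \delta$. For each $n$, let $(f^{(n)}, v^{(n)}, z^{(n)})$ be a minimizing path in $\mathcal A(f, f_n)$ (existence from Theorem \ref{th:minimizer_existence}), let $\psi_n := \Phi_n(\cdot, 1)$ be the time-$1$ flow of $v^{(n)}$, and set $g_1^{(n)} := g_0 \circ \psi_n^{-1}$. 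Applying Lemma \ref{ineq:f1g1} with $f_0 = f$, $f_1 = f_n$ gives
\[
\|f_n - g_1^{(n)}\|_{L^2} \leq \exp\!\big(\sqrt{2}\, d_{HV}(f, f_n)\big)\big[\|f - g_0\|_{L^2} + d_{HV}(f, f_n)\big].
\]
Since $d_{HV}(f, f_n) \to 0$, for $n$ large the right-hand side is at most $2(\delta + d_{HV}(f,f_n)) \le 3\delta$, say.

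Next I need to estimate $\|g_1^{(n)} - f\|_{L^2}$, i.e. $\|g_0 \circ \psi_n^{-1} - f\|_{L^2}$. Write this as $\|g_0 \circ \psi_n^{-1} - g_0\|_{L^2} + \|g_0 - f\|_{L^2}$; the second term is $< \delta$, so the crux is showing $g_0 \circ \psi_n^{-1} \to g_0$ in $L^2$. Here I would use that the diffeomorphisms $\psi_n$ are generated by vector fields with $\|v^{(n)}\|_{L^2(0,1,H^2)} \le \sqrt{2}\, d_{HV}(f,f_n) \to 0$: by the flow estimate \eqref{eq:DPhi}–\eqref{eq:DXestimate}, $D\psi_n \to 1$ uniformly and, more directly, $\sup_x |\psi_n(x) - x| \le \int_0^1 \|v^{(n)}(\cdot,s)\|_{L^\infty}\,\dd s \le \sqrt{2}\,\|v^{(n)}\|_{L^2(0,1,H^2)} \to 0$, so $\psi_n \to \mathrm{id}$ and likewise $\psi_n^{-1} \to \mathrm{id}$ uniformly; then by uniform continuity of the $C^1$ (hence Lipschitz on $[0,1]$) function $g_0$, together with the uniform bound on $|D\psi_n^{-1}|$ to control the change of variables, $\|g_0 \circ \psi_n^{-1} - g_0\|_{L^2} \to 0$. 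Combining, $\limsup_n \|f_n - f\|_{L^2} \le \limsup_n(\|f_n - g_1^{(n)}\|_{L^2} + \|g_1^{(n)} - g_0\|_{L^2} + \|g_0 - f\|_{L^2}) \le 3\delta + 0 + \delta = 4\delta$, and letting $\delta \to 0$ finishes the proof.

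The main obstacle I anticipate is the step $g_0 \circ \psi_n^{-1} \to g_0$ in $L^2$: one must be careful that the flow maps $\psi_n$ really do converge to the identity, which relies on quantitatively tracking how $\|v^{(n)}\|_{L^2(0,1,H^2)}$ vanishes — this is exactly where the definition of $d_{HV}$ (as the infimum that bounds $\tfrac12\int_0^1\int_0^1 \kappa v^2 + \lambda v_x^2 + \varepsilon v_{xx}^2 + z^2$ from below, up to the equivalence $d_{HV(\kappa,\lambda,\veps)} \asymp d_{HV(1,1,1)}$) and the $H^1_0 \hookrightarrow L^\infty$ embedding constant $\sqrt2$ enter. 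A minor technical point is justifying the change of variables $x \mapsto \psi_n^{-1}(x)$ in the $L^2$ norm, which is fine because $\psi_n^{-1}$ is a $C^1$ diffeomorphism with Jacobian bounded above and below by $\exp(\pm\sqrt{2}\|v^{(n)}\|_{L^2(0,1,H^2)})$, uniformly in $n$. Everything else is a routine triangle-inequality bookkeeping, and no compactness beyond what is already proved is needed.
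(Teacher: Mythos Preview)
Your proof is correct and in fact more direct than the paper's. Both proofs fix a smooth $g_0$ close to $f$ in $L^2$, set $g_1^{(n)}=g_0\circ\psi_n^{-1}$ with $\psi_n$ the time-$1$ flow of a minimizing path from $f$ to $f_n$, and invoke Lemma~\ref{ineq:f1g1} to control $\|f_n-g_1^{(n)}\|_{L^2}$. The divergence is in how $g_1^{(n)}$ is handled. You exploit the sharp information $d_{HV}(f,f_n)\to 0$ to conclude $\|v^{(n)}\|_{L^2(0,1,H^2)}\to 0$, hence $\psi_n^{-1}\to\mathrm{id}$ uniformly, hence $g_0\circ\psi_n^{-1}\to g_0$ in $L^2$ directly (since $g_0$ is Lipschitz). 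The paper instead only uses the uniform bound $d_{HV}(f_n,f)<1$ to show $\{g_1^{(n)}\}$ is bounded in $W^{1,2}(0,1)$, then extracts a Cauchy subsequence via Morrey/Arzel\`a--Ascoli, deduces $\{f_{n_k}\}$ is Cauchy in $L^2$, and finally identifies the limit with $f$ through a weak-limit uniqueness argument. Your route avoids the compactness step, the subsequence extraction, and the limit identification entirely; the paper's route is more robust in that it would still work if one only knew the $\psi_n$ were uniformly bi-Lipschitz rather than converging to the identity, but that extra generality is not needed here.
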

\begin{proof}
It suffices to show the conclusion $d_{HV(1,1,1)}$. We will prove this by showing that for any subsequence of $\{f_n\}_{n\in\mathbb N}$, there exists a further subsequence that converges to $f$ in $L^2$. 

Let $K:=2\exp(\sqrt{2})$. Let $0<\delta<1$ be arbitrarily chosen. By extracting a subsequence, we can assume that for every $n$, $d_{HV}(f_n,f)\leq \frac{\delta}{3K}<1$.
Since $C^{\infty}$ is dense in $L^2$, there exists $g\in C^{\infty}$ such that $\|f-g\|_{L^2}\leq \frac{\delta}{3K}$. For any $n$, let $\psi_n$ be the time-$1$ flow map of $v_n$, where $(f_{n},v_{n},z_{n})$ denotes the minimizer of the action $A(f,v,z)$ with the admissible set $\mathcal A(f,f_n)$.  Take $g_n:=g\comp \psi_n^{-1}$ if $f_n\neq f$, and take $g_n=g$ if $f_n=f$.

We will show that $\{g_n\}_{n\in\mathbb N}$ is a bounded set in $W^{1,2}(0,1)$. Thus without loss of generality, we assume $f_n\neq f$. By Lemma \ref{ineq:f1g1},
\begin{align}
\begin{split}
&\|f_n-g_n\|_{L^2}\leq  \exp(\sqrt{2} d_{HV}(f,f_n))\left [\|f-g\|_{L^2}+ d_{HV}(f,f_n)\right ]<\frac{\delta}{3}.
\end{split}
\end{align}
On the other hand, by change of variables, 
\[\|g_n\|^2_{L^2}\leq \|g\|^2_{L^2}\|D\psi_n\|_{L^{\infty}}\leq \|g\|^2_{L^2}\exp(\sqrt{2}\|v_n\|_{L^2(0,1,H^2(0,1))}),\]
where we use the estimate (\ref{est:detDpsi}) with $\psi,v$ replaced by $\psi_n, v_n$. Then we have
\[\|g_n\|_{L^2}\leq \|g\|_{L^2}\exp \left(\frac{\sqrt{2}}{2}d_{HV}(f,f_n) \right)\leq \|g\|_{L^2}\exp\left(\frac{\sqrt{2}}{2} \right),\] 
which implies that $\{g_n\}_{n\in\mathbb N}$ is a bounded set in $L^2$. Since $\psi_n^{-1}$ is the time-$1$ flow of $v_n(\cdot,1-t)$, by (\ref{eq:DPhi}),
\[
\int_0^1 |D g_n|^2\,\dd x\leq \exp(\sqrt{2}\|v_n\|_{L^2(0,1, H^2(0,1))})\|g'\|^2_{L^2}\leq \exp(\sqrt{2}d_{HV}(f,f_n))\|g'\|^2_{L^2}
\]
which implies that $\{D g_n\}_{n\in\mathbb N}$ is a bounded set in $L^2$. Thus $g_n\in W^{1,2}(0,1)$ and \[\sup\limits_n\|g_n\|_{W^{1,2}}< \exp(\sqrt{2})\left [\|g\|_{L^2}+\|g'\|^2_{L^2} \right ].\] Hence $\{g_n\}_{n\in\mathbb N}$ is a bounded set in $W^{1,2}(0,1)$. By Morrey's inequality and Arzela-Ascolli compactness criterion, there exists a subsequence $g_{n_k}$ that is Cauchy in $L^2$. For sufficiently large $N$, for any $i,j>N$, we have $\|g_{n_i}-g_{n_j}\|_{L^2}<\frac{\delta}{3}$. Thus
\begin{align*}
\|f_{n_i}-f_{n_j}\|_{L^2}&\leq \|f_{n_j}-g_{n_j}\|_{L^2}+\|g_{n_j}-g_{n_i}\|_{L^2}+\|f_{n_i}-g_{n_i}\|_{L^2}<\delta.
\end{align*}
This implies that up to a subsequence, $\{f_{n_k}\}_{k\in\mathbb N}$ is a Cauchy sequence in $L^2$. By completeness of $L^2$, it converges to some $\tilde f\in L^2$.  $\tilde f$ is also the weak limit of the subsequence in $L^2$, by the uniqueness of weak limit, we should have $\tilde f=f$ (recall that $f$ is the weak limit of a subsequence of $\{f_{n_k}\}_{k\in\mathbb N}$.)

Therefore, up to a subsequence, $f_n\to f$ in $L^2$. For any subsequence $f_n\to f$, it admits a further subsequence that converges to $f$. This proves that for the whole sequence, $f_n\to f$ in $L^2$.
\end{proof}

\section{Properties of geodesics.} \label{sec:further_properties}

\subsection{Euler--Lagrange Equations}\label{sec:EL}
 Assume $( f,  v,  z ) \in \mathcal A(f_0, f_1)$ is a minimizer of the action \eqref{eq:action1} over the admissible set \eqref{eq:A}. 
Below we describe the first-order optimality conditions, first for general paths, \eqref{eq:ELv1} and \eqref{eq:ELz1}, and then under assumption that $f_0,f_1 \in H^1(0,1)$, in which case all conditions are partial differential equations.
 
To find the first-order optimality conditions, in other words the Euler--Lagrange equations,
we first fix $f$ and perform the first variation in $v$ and $z$. 
Motivated by a similar reasoning as in Section \ref{sec:tangent}, we introduce the space,
\begin{equation} 
    R_T(f) = \left\{ u \in L^2(0,1, H^2(0,1) \cap H^1_0(0,1)) \:: uf\in L^2(0,1,H^1(0,1)) \right\}.
\end{equation}
Note that for $u \in R_T(f)$, for a.e. $t\in [0,1]$, $u f_x = (uf)_x - u_x f \in L^2((0,1)^2)$.

Now we characterize the optimality condition analogous to that in Lemma \ref{lem:first_var_gen}. Taking $u\in R_T(f)$ and $h=uf_x$, which belongs to $L^2((0,1)^2)$, we have that, for any $s \in \R$, $\,(f,v+su,z+sh)$ satisfies the equation \eqref{eq:weaksol}, and thus $(f,v+su,z+sh) \in \mathcal A(f_0, f_1)$ . First variation of action \eqref{eq:action1} gives that $(f,v,z)$ satisfies:
\begin{equation}\label{eq:ELv1}
\forall u\in R_T(f)\qquad\int_0^1\int_0^1\kappa vu+\lambda v_x u_x+\veps v_{xx}u_{xx}+ z(uf_x)\,\dd x\dd t=0. \qquad
\end{equation}

We then turn to the optimality of $z$.   To carry out the argument let us denote that minimizer of the  action considered by $(\bar f, \bar v, \bar z)$.
Then $(\bar f, \bar z)$ is a critical point of $\tilde A(z) = \int_0^1 \int_0^1 z^2 \dd x dt$ over the set of  $(f,v,z) \in \mathcal A(f_0, f_1)$ such that $v = \bar v$.
This is a convex functional over linear constraint. Thus $(\bar f, \bar z)$ is a global minimizer for fixed $\bar v$. 

Furthermore by Lemma \ref{lem:representation_sol}, the constraint that $(f,v,z) \in \mathcal A(f_0, f_1)$, with $v= \bar v$ can be expressed as follows: for a.e. $x,t$
\begin{equation} \label{eq:fz2}
 f(\Phi(x,t),t)=f_0(x)+\int_0^t z(\Phi(x,s),s)\,\dd s.  
\end{equation}
In other words $z$ needs to satisfy that for a.e. $x \in (0,1)$
\begin{equation} \label{eq:z2}
f_1(\Phi(x,1))-f_0(x) = \int_0^1 z(\Phi(x,t),t)\,\dd t
\end{equation}
while $f(\tacka,t)$ for $0<t<1$ is defined by \eqref{eq:fz2}.
Since $f$ does not enter the action directly, we  minimize over $z$ alone and define $f$ by \eqref{eq:fz2}. After the change of variables
$\tilde{\mathcal A}(z) = \int_0^1 \int_0^1 z^2(\Phi(x,t),t) D\Phi(x,t) \dd x \dd t$. Hence if we define $w(x,t) = z(\Phi(x,t),t)$ the problem is transformed to 
\begin{align*} 
& \te{minimize} \int_0^1 \int_0^1 w^2(x,t) D\Phi(x,t) \dd x \dd t  \\
& \te{under constraint:} \,
f_1(\Phi(x,1))-f_0(x) = \int_0^1 w(x,t)\,\dd t \quad \te{ for a.e. }  x \in (0,1).
\end{align*}
By Cauchy-Schwarz inequality, the minimizer $w$ of this problem should satisfy that  $w(x,t) D\Phi(x,t)$ is independent of time. Thus $z$ which minimizes $\tilde A$ satisfies that for a.e. $x$
\[ z(\Phi(x,t),t) D\Phi(x,t) =: z(x,0). \]
By Remark \ref{rem:z2}, this implies that $z$ is a weak solution of 
\begin{equation} \label{eq:ELz1}
   \partial_t z + \partial_x ( zv) =0 \qquad \te{on } (0,1)^2,
\end{equation}
which is the desired first-order condition on $z$. 
\bigskip

If $f_0, f_1 \in H^1(0,1)$ then we establish in the  Proposition \ref{prop:f_H1} below that the minimizing path $f \in L^2(0,1,H^1(0,1))$. In that case we the resulting 
  Euler--Lagrange equations  satisfied by $f,v,z$, combined with the condition for belonging to $\mathcal A(f_0, f_1)$ can be expressed as follows:
\begin{align}
\veps v_{xxxx} -\lambda v_{xx} + \kappa v + z f_x & = 0 \te{ weakly on } (0,1)^2 \label{eq:elv}\\
 v = 0   \; \te{ and } \;  v_{xx} & = 0 \,\,\te{ on } \partial \Omega \times [0,1] \;  \label{eq:v-BCs}\\
z_t + (zv)_x & = 0  \te{ weakly on } (0,1)^2 \label{eq:elz}  \\
 f_t + f_x  v - z & = 0 \te{ weakly on } (0,1)^2 \label{eq:elf}\\
 f(\tacka,0)  =f_0, \; \te{ and }  \; f(\tacka,1)&=f_1. \label{eq:f-ICs}
\end{align}
The first equations, and boundary conditions,  follow directly from \eqref{eq:ELv1}. The other conditions are identical as before.

\subsection{Regularity of geodesics.}
This subsection presents results regarding the regularity of the geodesics.

\begin{proposition}\label{prop:f_H1}
 Assume $f_0,f_1\in H^1$ and let $(f,v,z) \in \mathcal A(f_0,f_1)$ be an action minimizing path. Then $f\in L^{\infty}(0,1,H^1(0,1))$, with quantitative estimates on the norm \eqref{H1_f_phi}, \eqref{H1_f_phi2}.
\end{proposition}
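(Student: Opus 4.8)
The plan is to exploit the Lagrangian representation formulas from Remark \ref{rem:zformula}. Recall that for an action minimizer we have from \eqref{eq:f-formula} the identity
\[
f(\Phi(x,t),t) = \left(1-\eta(x,t)\right) f_0(x) + \eta(x,t)\, f_1(\Phi(x,1)), \qquad \eta(x,t) = \frac{\int_0^t J(x,s)\,\dd s}{\int_0^1 J(x,s)\,\dd s},
\]
where $J(x,t) = e^{-\int_0^t v_x(\Phi(x,s),s)\,\dd s} = D\Phi(x,t)^{-1}$ by \eqref{eq:DPhi}. The first step is to differentiate this relation in space. Since $f_0, f_1 \in H^1$, and since $\Phi(\cdot,t)$ is a $C^1$ diffeomorphism of $[0,1]$ with $D\Phi$ bounded above and below by $\exp(\pm\sqrt2 \|v\|_{L^2(0,1,H^2)})$ (estimate \eqref{eq:DXestimate}), the right-hand side, viewed as a function of $y = \Phi(x,t)$, is the sum of a $W^{1,\infty}$-in-$y$ weight times an $H^1$-in-$y$ composition. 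The key point is to bound $\partial_x \eta(x,t)$: this requires controlling $\partial_x J(x,t)$, which involves $\int_0^t v_{xx}(\Phi(x,s),s) D\Phi(x,s)\,\dd s$, and here the $\veps v_{xx}^2$ term in the action is exactly what gives the needed $L^2_t L^2_x$ (indeed $L^2_t L^\infty_x$, via $H^2 \hookrightarrow C^1$) control on $v_{xx}$. So the chain of estimates is: $v \in L^2(0,1,H^2)$ with norm bounded by $\sqrt2\, d_{HV}(f_0,f_1)$ $\Rightarrow$ $D\Phi$ and $D\Phi^{-1}$ bounded $\Rightarrow$ $J$ and $1/\int_0^1 J$ bounded $\Rightarrow$ $\partial_x \eta$ controlled in terms of $\|v\|_{L^2(0,1,H^2)}$ and the pointwise bounds on $J$.

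Concretely, I would fix $t$, change variables $y = \Phi(x,t)$ so that $x = \Phi(\cdot,t)^{-1}(y)$, and write
\[
f(y,t) = \bigl(1-\eta(\Phi(\cdot,t)^{-1}(y),t)\bigr) f_0(\Phi(\cdot,t)^{-1}(y)) + \eta(\Phi(\cdot,t)^{-1}(y),t)\, f_1(\Phi(\cdot,t)^{-1}(y)\cdot\text{-image under }\Phi(\cdot,1)).
\]
More cleanly: $f(\cdot,t) = (1-\tilde\eta(\cdot,t)) (f_0 \circ \Phi(\cdot,t)^{-1}) + \tilde\eta(\cdot,t)\,(f_1 \circ \psi \circ \Phi(\cdot,t)^{-1})$ where $\psi = \Phi(\cdot,1)$ and $\tilde\eta(y,t) = \eta(\Phi(\cdot,t)^{-1}(y),t)$. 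Each composition $f_0\circ \Phi(\cdot,t)^{-1}$ lies in $H^1(0,1)$ with norm bounded by $\|f_0\|_{H^1}$ times a power of $\exp(\sqrt2\|v\|_{L^2(0,1,H^2)})$, by the chain rule together with \eqref{eq:DXestimate} (the derivative of the inverse flow is $1/D\Phi$ evaluated at the preimage); similarly $f_1\circ\psi\circ\Phi(\cdot,t)^{-1} = f_1 \circ \Phi(\cdot,1)\circ\Phi(\cdot,t)^{-1}$ is an $H^1$ composition with a $C^1$ diffeomorphism whose derivative is again controlled. Then $\tilde\eta(\cdot,t)\in W^{1,\infty}(0,1)$ with $0 \le \tilde\eta \le 1$ and $\|\partial_y\tilde\eta(\cdot,t)\|_{L^\infty}$ bounded uniformly in $t$ by an expression involving $\|v\|_{L^2(0,1,H^2)}$ (using $\int_0^1\|v_{xx}(\cdot,s)\|_{L^\infty}\,\dd s \le \sqrt2\|v\|_{L^2(0,1,H^2)}$). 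Since $W^{1,\infty}\cdot H^1 \subseteq H^1$ with the obvious product estimate, we conclude $f(\cdot,t)\in H^1(0,1)$ with
\[
\|f(\cdot,t)\|_{H^1} \le C\bigl(\|v\|_{L^2(0,1,H^2)}\bigr)\,\bigl(\|f_0\|_{H^1} + \|f_1\|_{H^1}\bigr),
\]
uniformly in $t\in[0,1]$, which after using $\|v\|_{L^2(0,1,H^2)} \le \sqrt2\,d_{HV}(f_0,f_1)$ gives the quantitative bounds \eqref{H1_f_phi}, \eqref{H1_f_phi2}. Taking the supremum over $t$ yields $f\in L^\infty(0,1,H^1(0,1))$.

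The main obstacle I anticipate is the careful bookkeeping of the spatial derivative of $\eta$ (equivalently of $J$) and checking that all the compositions are genuinely weakly differentiable with the expected derivatives — i.e., that the formal chain rule is justified given that $\Phi(\cdot,t)$ is only $C^1$ in space (from $H^2\hookrightarrow C^1$) and $f_0,f_1$ are only $H^1$. This is a standard but slightly delicate point: composition of an $H^1$ function with a bi-Lipschitz $C^1$ map is $H^1$, and one should invoke this (or prove it by density of smooth functions in $H^1$, approximating $f_0,f_1$ by smooth functions and passing to the limit using the uniform bounds). A secondary subtlety is that $\partial_x J$ requires differentiating $\Phi$ twice in a mixed sense; but $\partial_x\bigl(\int_0^t v_x(\Phi(x,s),s)\,\dd s\bigr) = \int_0^t v_{xx}(\Phi(x,s),s)\,D\Phi(x,s)\,\dd s$ is justified by the regularity of the flow and is controlled in $L^\infty_x$ for a.e.\ fixed structure by $\int_0^1 \|v_{xx}(\cdot,s)\|_{L^\infty} \exp(\sqrt2\|v\|_{L^2(0,1,H^2)})\,\dd s < \infty$. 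Once these justifications are in place, the estimate itself is routine. I would also remark that the same argument, bootstrapped, gives higher regularity when $f_0, f_1$ are smoother (as anticipated in Remark \ref{rem:fsmooth}), since then $v$ solves the elliptic equation \eqref{eq:elv} with a smoother right-hand side.
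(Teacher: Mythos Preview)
Your overall strategy matches the paper's: use the Lagrangian representation \eqref{eq:f-formula}, differentiate the weight $\eta$ and the compositions $f_0$, $f_1\circ\Phi(\cdot,1)$, and combine via a product rule. The paper in fact works with $f(\Phi(x,t),t)$ and then invokes the bi-Lipschitz change of variables (Ziemer) to transfer $H^1$ regularity to $f(\cdot,t)$, which is equivalent to your direct approach in the $y$-variable.

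However, there is a genuine error in your regularity bookkeeping for $\eta$. You assert $\tilde\eta(\cdot,t)\in W^{1,\infty}$ with the justification ``using $\int_0^1\|v_{xx}(\cdot,s)\|_{L^\infty}\,\dd s \le \sqrt2\|v\|_{L^2(0,1,H^2)}$'' and the parenthetical ``indeed $L^2_t L^\infty_x$, via $H^2 \hookrightarrow C^1$'' for $v_{xx}$. This is false: the embedding $H^2(0,1)\hookrightarrow C^1(0,1)$ gives $v_x\in L^\infty$, not $v_{xx}\in L^\infty$. For $v\in L^2(0,1,H^2)$ one only has $v_{xx}\in L^2((0,1)^2)$, and $\|v_{xx}(\cdot,s)\|_{L^\infty}$ need not even be finite. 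Consequently $\partial_x\eta(\cdot,t)$ is not in $L^\infty$ in general (indeed, the paper's Proposition \ref{prop:stability_geod} only obtains $C^{0,1/2}$ for $\eta$), so your product estimate $W^{1,\infty}\cdot H^1\subseteq H^1$ is not available.

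The fix, which is exactly what the paper does, is to establish $\partial_x J(\cdot,t)\in L^2$ (hence $\partial_x\eta(\cdot,t)\in L^2$) uniformly in $t$, via
\[
\Bigl\|\partial_x\!\int_0^t v_x(\Phi(x,s),s)\,\dd s\Bigr\|_{L^2_x}^2 \le t\, b\,\|v\|_{L^2(0,1,H^2)}^2,
\]
using a change of variables and the $L^2$ bound on $v_{xx}$. Then $\eta(\cdot,t)\in H^1\cap L^\infty$ with $0\le\eta\le1$, and in one dimension $H^1\hookrightarrow L^\infty$ gives the algebra property $(H^1\cap L^\infty)\cdot H^1\subseteq H^1$: in the product rule $\partial_x(\eta g)=(\partial_x\eta)g+\eta\,\partial_x g$, the first term is $L^2\cdot L^\infty\subset L^2$ and the second is $L^\infty\cdot L^2\subset L^2$. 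With this correction your argument goes through and is essentially the paper's proof.
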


\begin{proof}
We remark that the existence of action minimizing $(f,v,z)\in \mathcal A(f_0,f_1)$ is guaranteed by Theorem \ref{th:minimizer_existence}. 
The argument in Section \ref{sec:EL} implies that $z$ solves $z_t+(zv)_x=0$, weakly.
As in Remark \ref{rem:zformula}, $z$ satisfies the formula \eqref{eq:zJ} and $f$ satisfies \eqref{eq:f-formula}.
By Theorem 2.2.2 in \cite{Ziemer1989WeaklyDF}, since $\;\Phi(\tacka,t)$ is bi-Lipschitz in $x$ it suffices to check if $f(\Phi(x,t),t)\in H^1$. We use the composition with $\Phi$ in several instances below. 

Let $b:=e^{\sqrt{2} \|v\|_{L^2(0,1,H^2)}}$.  Recall from \eqref{eq:DXestimate} that for all $x \in [0,1]$ and $t \in [0,1]$
\[ \frac{1}{b} \leq |D\Phi(x,t) | \leq b. \]


Note that for all $t$, 
$\; \|\int_0^tv_x ( \Phi(\tacka, s), s)\,\dd s \|_{L^{\infty}}\leq \|v_x\|_{L^1(0,1,L^{\infty})}\leq  \sqrt{2} \|v\|_{L^2(0,1,H^2)}.$
Thus for all $(x,t)$
\begin{equation} \label{eq:Jbelowabove}
   \frac{1}{b}  \leq   J(x,t) \leq e^{ \sqrt{2} \|v\|_{L^2(0,1,H^2)}}=b. 
\end{equation}

Change of variables provides that, for a.e. $t$,
\[\int_0^1 (\partial_x v_x(\Phi(x,t),t))^2 \dd x \leq \|D\Phi(\tacka,t)\|_{L^\infty} \| v_{xx}(t)\|_{L^2((0,1))}^2 \leq b \|v\|_{L^2(0,1,H^2)}.\]
Integrating in $t$ and Cauchy-Schwarz inequality imply that for all $t \in [0,1]$
\[\left\| \partial_x \int_0^t v_x ( \Phi(x, s), s) \dd s \right\|_{L^2(0,1)}^2 \leq 
t b\,  \|v\|^2_{L^2(0,1,H^2)}.\]
Combining with the  chain rule and the $L^\infty$ estimate on $J$ implies that 
$J \in L^\infty(0,1,H^1(0,1))$ and
\begin{equation} \label{eq:intJH1}
\forall t \in [0,1] \qquad \quad \left\|\int_0^t J(x,s)\,\dd s\right\|_{H^1(0,1)} \! \leq b+ b^{3/2} \,  \|v\|_{L^2(0,1,H^2)} \qquad \phantom{.}
\end{equation}


Since $J \geq \frac{1}{b}$,  $1/\int_0^1 J(x,t) dt$ is in $H^1(0,1)$. By chain rule \cite{Evans},
\begin{equation} \label{eq:1overJ}
\left\|\partial_x \left(1/\int_0^1 J(x,t) dt\right)\right\|_{L^2}\leq b^2\left\|\partial_x \left(\int_0^1 J(x,t) dt\right)\right\|_{L^2}\leq b^{\frac72}\|v\|_{L^2(0,1,H^2)}.
\end{equation}
Moreover by change of variables and estimate \eqref{eq:DXestimate},
$\| \partial_x f_1 ( \Phi(x, 1) )\|_{L^2}\leq\sqrt{b}\|\partial_x f_1\|_{L^2}.$ 
From \eqref{eq:zJ}, \eqref{eq:Jbelowabove}, \eqref{eq:intJH1}, and \eqref{eq:1overJ}, via the product rule and using that $\| gh\|_{L^2} \leq \|g\|_{L^2} \|h\|_{L^\infty}$, we obtain
\begin{align*}
\| \partial_x z(\Phi(x, \tacka),\tacka) \|_{L^2((0,1)^2)} & \leq (\| \partial_x f_1(\Phi(\tacka,1)) \|_{L^2} + \| \partial_x f_0 \|_{L^2}) b^2 \\
& \phantom{\leq\:} + (\| f_1(\Phi(\tacka,1)) \|_{L^\infty}+ \|f_0 \|_{L^ \infty}) (b^\frac52 + b^\frac92) \|v\|_{L^2(0,1,H^2)}\\
& \leq 3(  \| f_1\|_{H^1} + \| f_0 \|_{H^1})(b^\frac52 + b^\frac92 \|v\|_{L^2(0,1,H^2)}).
\end{align*}



Note also that, via change of variables, 
\[ \int_0^1 \int_0^1 z^2(\Phi(x,t),t) \dd x \dd t \leq b \|z \|_{L^2((0,1)^2)}^2 \leq b \|f_1 - f_0\|_{L^2}^2. \]

From \eqref{eq:f-formula}, by a change of variables and estimate \eqref{eq:DXestimate}, for every $t\in [0,1]$,
\begin{equation}\label{H1_f_phi}
\|\partial_x f(\tacka,t)\|_{L^2(0,1)}\leq \sqrt{b} \|\partial_x f(\Phi(\tacka,t),t)\|_{L^2} \leq 6( \| f_1 \|_{H^1} + \|f_0\|_{H^1}) (1+b^3 +b^5 \|v\|_{L^2(0,1,H^2)}),
\end{equation}
which using that $\|v\|_{L^2(0,1,H^2)} \leq C_\veps  \|f_1 -f_0\|_{L^2}$ can be turned in an estimate where right-hand side depends only on initial and final signal. 

The estimate on $\|f(\tacka, t)\|_{L^2}$ from Lemma \ref{f_n_energy_est} and the estimate on $z$ above provide that 
\begin{equation} \label{H1_f_phi2}
 \|f(\tacka,t)\|_{L^2(0,1)} \leq b(\|f_0\|_{L^2} + \sqrt{b} \|f_0-f_1\|_{L^2} ), 
\end{equation}
which completes the proof. 
\end{proof}

\begin{remark} \label{rem:fsmooth}
The regularity above can be improved to spaces with more regularity. Here we now outline  argument that if $f_0$ and $f_1$ ate smooth then $f(\tacka,t)$ is smooth for all $t \in [0,1]$. 
For $f_0,f_1\in H^1(0,1)$ Proposition \ref{prop:f_H1} gives that the minimizing path $f\in L^{\infty}(0,1,H^1(0,1))$ and $z\in L^{\infty}(0,1,H^1(0,1))$. Thus in equation \eqref{eq:elv},  $zf_x\in L^{2}(0,1)$ for every $t\in [0,1]$. The results on elliptic boundary value problems imply $v(\tacka,t)\in H^4(0,1)$. Moreover, there exists a constant $C$ such that
\[
\|v(\tacka,t)\|_{H^4(0,1)}\leq C(\|zf_x(\tacka,t)\|_{L^2}+\|v(\tacka,t)\|_{H^2}).
\]
Thus \[\|v\|_{L^2(0,1,H^4)}\leq C(\|z\|_{L^2(0,1,H^1)}\|f\|_{L^2(0,1,H^1)}+\|v\|_{L^2(0,1,H^2)}), \]
where the right-hand side, by Proposition \ref{prop:f_H1}, is a function only depending on $f_0$ and $f_1$.

According the differentiability of ODE solution with respect to parameters, we obtain that for fixed $t$, the flow map $\Phi(x,t)\in C^3(0,1)$. In addition, $\Phi(x,t)\in L^{\infty}(0,1,C^3)$. Indeed, $D^2\Phi(x,t)$ satisfies the equation:
\[
\partial_t D^2\Phi(x,t)=( v_x(\Phi(x,t),t))^2 (D\Phi(x,t))^2+v_x(\Phi(x,t),t)D^2\Phi(x,t).
\]
By Gronwall's inequality and \eqref{eq:DXestimate} we obtain that $D^2\Phi\in L^{\infty}((0,1)^2)$. Inductively, $\Phi \in L^{\infty}(0,1,C^3)$.

Now we assume that $f_0,f_1\in H^4(0,1)$. The above discussion gives that $v\in L^2(0,1,H^4)$ and $\Phi\in L^{\infty}(0,1,C^3)$. Applying chain rule to \eqref{eq:zJ} and following a similar argument as in the proof of Proposition \ref{prop:f_H1}, we have that $z\in L^{\infty}(0,1, H^3(0,1))$. Then by \eqref{eq:f-formula}, $f\in L^{\infty}(0,1,H^3(0,1))$, with $\|f\|_{L^{\infty}(0,1,H^3(0,1))}$  bounded by a function of $f_0$ and $f_1$.

By an iterative argument, we conclude that for any $k\in\mathbb N$, $f,z\in L^{\infty}(0,1,H^k(0,1))$ and $v\in L^2(0,1,H^k(0,1))$, which implies the smoothness of $f(\tacka, t)$. 
\end{remark}
\medskip

\subsection{Stability of geodesics.}
Next, we study the stability of the geodesic in terms of a subsequence approximation.
\begin{proposition} \label{prop:stability_geod}
Assume $f_0,f_1 \in L^2(0,1)$,  $f^n_0, f^n_1 \in L^2(0,1)$ for all $n \in \N$, and 
$f^n_0 \to f_0, \;\; f^n_1 \to f_1$ in $L^2(0,1)$ as $n \to \infty$. 
Let $(f^n, v^n, z^n) \in \mathcal A(f^n_0, f^n_1) $ be action minimizing paths. Then there exists $(f,v,z) \in \mathcal A(f_0, f_1)$ such that along a subsequence
\begin{align*}
&f^n \overset{*}{\rightharpoonup} f \quad\text{in }L^{\infty}((0,1),L^2(0,1))\\
&f^n \to f \quad\text{in }C((0,1),(L^2(0,1), d_{HV}))\\
&z^n \rightharpoonup z \quad\text{in  }L^2((0,1),L^2(0,1))\\
&v^n \rightharpoonup v \quad\text{in }L^2([0,1]; H^{2}(0,1)).
\end{align*}
Furthermore $(f,v,z)$ is an action minimizing path between $f_0$ and $f_1$.
\end{proposition}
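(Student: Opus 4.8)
The plan is to run the direct method once more, but now tracking the endpoints as part of the data. First I would derive uniform bounds: since $f^n_0 \to f_0$ and $f^n_1 \to f_1$ in $L^2$, the sequences $\{f^n_0\}$, $\{f^n_1\}$ are bounded in $L^2(0,1)$, and by Proposition \ref{prop:properties}(i) the linear interpolation between $f^n_0$ and $f^n_1$ is an admissible competitor with action $\frac12\|f^n_0-f^n_1\|_{L^2}^2$, which is uniformly bounded. Hence the action minimizers $(f^n,v^n,z^n)$ have $\{v^n\}$ bounded in $L^2(0,1,H^2\cap H^1_0)$ and $\{z^n\}$ bounded in $L^2((0,1)^2)$, and by Lemma \ref{f_n_energy_est} (applied with $f^n_0$ in place of $f_0$, using the uniform $L^2$ bound on $f^n_0$) $\{f^n\}$ is bounded in $L^\infty(0,1,L^2(0,1))$; by Lemma \ref{solution_continuity} $\{\partial_t f^n\}$ is bounded in $L^2(0,1,H^{-1}(0,1))$.

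Next I would extract weak limits exactly as in the proof of Theorem \ref{th:minimizer_existence}: passing to a subsequence, $v^n \rightharpoonup v$ in $L^2(0,1,H^2(0,1))$, $z^n \rightharpoonup z$ in $L^2((0,1)^2)$, $f^n \overset{*}{\rightharpoonup} f$ in $L^\infty(0,1,L^2(0,1))$, and by Lions--Aubin $f^n \to f$ in $L^2(0,1,H^{-\frac12}(0,1))$. The passage to the limit in the weak formulation \eqref{eq:weaksol} is the same term-by-term argument as in Theorem \ref{th:minimizer_existence} — the only new feature is that the boundary terms $\int_0^1 f^n_1(x)\phi(x,1)\,\dd x$ and $\int_0^1 f^n_0(x)\phi(x,0)\,\dd x$ now converge because $f^n_0 \to f_0$, $f^n_1 \to f_1$ strongly in $L^2$. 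This yields $(f,v,z) \in \mathcal A(f_0,f_1)$ with $f \in C([0,1],H^{-1}(0,1))$ via Lemma \ref{solution_continuity}.

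For the optimality claim I would argue by contradiction on the action values. Lower semicontinuity of the norm-squared terms under weak convergence gives $A_{\kappa,\lambda,\veps}(f,v,z) \le \liminf_n A_{\kappa,\lambda,\veps}(f^n,v^n,z^n)$. For the reverse direction, given any competitor $(g,w,r) \in \mathcal A(f_0,f_1)$, I would build near-competitors $(g^n,w^n,r^n) \in \mathcal A(f^n_0,f^n_1)$ by prepending a short $L^2$-interpolation segment from $f^n_0$ to $f_0$ and appending one from $f_1$ to $f^n_1$ (suitably reparameterized in time), whose extra action is $O(\|f^n_0-f_0\|_{L^2}^2 + \|f^n_1-f_1\|_{L^2}^2) \to 0$; minimality of $(f^n,v^n,z^n)$ then gives $\limsup_n A(f^n,v^n,z^n) \le A(g,w,r)$, so $A(f,v,z) \le A(g,w,r)$ for all competitors, i.e. $(f,v,z)$ is a minimizer. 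As a byproduct $A(f^n,v^n,z^n) \to A(f,v,z)$, which upgrades the weak convergences of $v^n,z^n$ to strong convergence in $L^2$.

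Finally, the $C([0,1],(L^2,d_{HV}))$ convergence. Parametrize each minimizer by arc length so that $d_{HV}(f^n(\cdot,s),f^n(\cdot,t)) = |s-t|\,d_{HV}(f^n_0,f^n_1)$ for $s,t \in [0,1]$ (Lemma \ref{cfv}); the total lengths $d_{HV}(f^n_0,f^n_1)$ are uniformly bounded, so the curves $s \mapsto f^n(\cdot,s)$ are uniformly Lipschitz in $d_{HV}$. Combined with convergence at, say, $s=0$ (which follows from $f^n_0 \to f_0$ in $L^2$ and Proposition \ref{prop:properties}(i), giving $d_{HV}(f^n_0,f_0) \le \|f^n_0-f_0\|_{L^2} \to 0$) and the fact — via Theorem \ref{conv_L2} — that $d_{HV}$-convergence is equivalent to $L^2$-convergence on the bounded set at hand, an Arzel\`a--Ascoli argument in the complete metric space $(L^2,d_{HV})$ extracts a further subsequence converging in $C([0,1],(L^2,d_{HV}))$; identifying the limit with the $f$ already obtained finishes the proof. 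The main obstacle I anticipate is the last point: one must be careful that the reparameterized limit curve is the arc-length-parametrized version of $(f,v,z)$, and that the uniform-Lipschitz-in-$d_{HV}$ estimate is compatible with the weak-$*$ limit identification, since a priori one only controls $L^2$ norms of the signals and the $d_{HV}$-modulus of continuity simultaneously.
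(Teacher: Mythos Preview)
Your treatment of the uniform action bounds, the extraction of weak limits, the passage to the limit in the weak formulation, and the minimality of the limit path is essentially the paper's argument (your competitor construction for the upper bound on $\limsup_n A(f^n,v^n,z^n)$ is a harmless variant of the paper's use of the triangle inequality for $d_{HV}$).

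The genuine gap is in the Arzel\`a--Ascoli step. Equicontinuity plus convergence at $t=0$ is \emph{not} sufficient in an infinite-dimensional target: one still needs, for each fixed $t$, that $\{f^n(\cdot,t):n\in\N\}$ is precompact in $(L^2,d_{HV})$. Since by Theorem~\ref{conv_L2} and Proposition~\ref{prop:properties}(i) the $d_{HV}$ and $L^2$ topologies agree, this is $L^2$-precompactness, and a mere $L^\infty(0,1,L^2)$ bound does not give it (bounded sets in $L^2$ are not precompact). Your closing caveat misidentifies the difficulty as a reparameterization/identification issue; that part is routine. What is missing is the pointwise precompactness.

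The paper supplies exactly this ingredient via the Fr\'echet--Kolmogorov criterion: it proves a uniform-in-$n$, uniform-in-$t$ $L^2$-modulus of continuity under spatial translation, $\|f^n(\cdot+h,t)-f^n(\cdot,t)\|_{L^2}<\delta$ for all $0\le h\le h_0(\delta)$. This is the substantive technical work. It uses the Lagrangian representation $f^n(\Phi^n_t(x),t)=(1-\eta^n(x,t))f^n_0(x)+\eta^n(x,t)f^n_1(\Phi^n_1(x))$ for the minimizing paths (which comes from the optimality condition $z^n_t+(z^n v^n)_x=0$), together with: (i) a $C^{0,1/2}$ estimate on $\eta^n$ in $x$, uniform in $n,t$, coming from the uniform $L^2(0,1,H^2)$ bound on $v^n$; (ii) the uniform bi-Lipschitz control of the flow maps $\Phi^n_t$; and (iii) approximation of $f_0,f_1$ by smooth $g_0,g_1$ so that translates of $f^n_i\circ(\Phi^n_t)^{-1}$ can be controlled by Lipschitz estimates on $g_i$ plus an $\|f^n_i-g_i\|_{L^2}$ error that is small uniformly in $n$ once $n$ is large. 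Without this step your Arzel\`a--Ascoli invocation does not go through.
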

\begin{proof}
Since 
\begin{equation} \label{eq:HVtemp}
    d_{HV}(f^n_0, f^n_1) \leq \|f^n_0 - f_0\|_{L^2 } + d_{HV}(f_0, f_1) + \|f^n_1 - f_1\|_{L^2 },
\end{equation}
we conclude  that $A(f^n, v^n, z^n) \leq d_{HV}^2(f_0, f_1) +1 \leq \|f_0 - f_1\|_{L^2}^2 +1 $ for all $n$ large enough. We can assume, without loss of generality that the inequalities hold for all $n$. 
The proof of weak convergence mirrors the proof of Theorem \ref{th:minimizer_existence}. To show that $(f,v,z)$ is an action minimizing path, we note that by lower-semicontinuity $A(f,v,z) \leq \liminf_{n \to \infty} A(f^n,v^n,z^n)$. Since $A(f^n,v^n,z^n) = d_{HV}^2(f^n_0, f^n_1) $, from \eqref{eq:HVtemp} follows that $A(f,v,z)  \leq  d_{HV}^2(f_0, f_1) $. Thus 
 $A(f,v,z) =  d_{HV}^2(f_0, f_1) $.

We claim that for all $t$, $f^n(\tacka, t)$ is precompact in $L^2$. We note that by Lemma \ref{f_n_energy_est} the sequence
 $f^n(\tacka,t)$ is uniformly bounded in $L^2$.  To show precompactness in $L^2$, by \cite{FonLeo07}[Theorem 2.88] we need to show that for all $\delta>0$ there exists $h_0>0$ such that  for all $h \in [0, h_0]$ and all $n$
\begin{equation} \label{eq:unif_int}
 \|f^n(\tacka+h,t) - f^n(\tacka,t) \|_{L^2} < \delta. 
 \end{equation}

We proceed with the proof assuming \eqref{eq:unif_int}, and verity the condition at the end.

Observe that $t \mapsto f^n(\tacka,t)$ is a constant speed curve in $(L^2(0,1), d_{HV})$, since it is a geodesic. Therefore that family of functions $f^n$ considered as functions between $[0,1]$ and $(L^2(0,1), d_{HV})$ are uniformly Lipschitz and thus equicontinuous. Since we know that $\{ f^n(\tacka,t) \}_{n \in \N}$ is precompact in $(L^2(0,1), \| \tacka \|_{L^2})$, it is precompact in $L^2$ with respect to the $d_{HV}$ metric. Therefore by the Arzela-Ascoli Theorem, for metric-space valued functions, we get that $f^n$ is precompact in $C(0,1, (L^2(0,1), d_{HV})$. We note that this also implies that up to a subequence, $f^n(\tacka, t)$ converges to $f(\tacka, t)$ in $L^2(0,1)$ for all $t$ fixed. 
\smallskip

We now turn to proving that \eqref{eq:unif_int} holds. 
 By \eqref{eq:DXestimate}, for each $t$, $\Phi_t:=\Phi(\tacka,t):$ and $\Phi_t^{-1}$ are Lipschitz and the Lipschitz constant for both maps and for all $t \in [0,1]$ is bounded by $b_n:=  \exp(\sqrt{2}\|v^n\|_{L^2(0,1,H^2)})$. 

Next we show that the function $\eta^n(x,t):=\frac{\int_0^t J^n(x,s)\,\dd s}{\int_0^1 J^n(x,\tau)\,\dd \tau}$, where $J^n(x,t) = e^{-\int_0^t v^n_x ( \Phi^n(x, s), s) \dd s}$ is H\"older continuous in $x$.
Since $v_x^n$ is H\"older continuous, we have that for $x, y \in [0,1]$,

\[
|J^n(x,t)-J^n(y,t)|\leq e^{b_n}\int_0^t \|v_{xx}^n(\tacka,s)\|_{L^2} (\Lip(\Phi^n_s))^{\frac12}\,\dd s |x-y|^{\frac12}\leq  e^{b_n} \, b_n^{\frac12}\|v^n\|_{L^2(0,1,H^2)}|x-y|^{\frac12},
\]
since $|v_x(y) - v_x(y)| \leq |x-y|^\frac12 \|v_{xx}\|_{L^2}$.
This implies that for any $0\leq t\leq 1$, $\int_0^t J^n(x,s)\,\dd s$ is in $C^{0,\frac12}(0,1)$ with H\"older constant $ e^{b_n}\,b_n^{\frac12}\|v^n\|_{L^2(0,1,H^2)}$.  Observe that by \eqref{eq:Jbelowabove}, 
$\;b_n^{-1} \leq J^n(x,t)\leq b_n$.
Thus $(\int_0^1 J(x,\tau)\,\dd \tau)^{-1}$ is H\"older continuous with constant $ e^{b_n}\,b_n^{\frac52}\|v^n\|_{L^2(0,1,H^2)}$. Therefore for any $x,y$, for any $t$,
\[
|\eta^n(x,t)-\eta^n(y,t)|\leq (b_n e^{b_n}\,b_n^{\frac12}
+ b_n e^{b_n}\,b_n^{\frac52}) \, \|v^n\|_{L^2(0,1,H^2)}|x-y|^{\frac12}
\leq 2 e^{b_n}\,b_n^{\frac72}\|v^n\|_{L^2(0,1,H^2)}|x-y|^{\frac12}.
\]
By change of variables
\[
f^n(y,t)=\eta^n((\Phi^n_t)^{-1}(y),t)f_0^n((\Phi^n_t)^{-1}(y))+(1-\eta^n(\Phi_t^{-1}(y),t))f_1^n(\Phi_1\Phi_t^{-1}(y))
\]
We note that  $b:=\sup \{ b_n \::\: n \in \N\}$ is finite since $\| v^n \|_{L^2(0,1,H^2)}^2 \leq c \|f_0-f_1\|_{L^2}^2+1$ by the observation above, for some $c>0$ depending on $\lambda, \kappa$, and $\veps$. 

We extend $f^n$, $f_0$ and $f_1$ by zero to $\R$. We also extend $\Phi^n_t$ as identity ($\Phi^n_t(x)=x$) outside of $[0,1]$. 
 Choose $g_0,g_1\in C_c^{\infty}(\R)$ such that $\|f_0-g_0\|_{L^2(\R)}<\frac{\delta}{100b^3}$ and $\|f_1-g_1\|_{L^2(\R)}<\frac{\delta}{100b^3}$. Choose $N(\delta)$ such that, then for any $n>N(\delta)$, $\|f^n_0-f_0\|_{L^2}<\frac{\delta}{100b^3}$, $\|f^n_1-f_1\|_{L^2}<\frac{\delta}{100b^3}$. Then for $n>N(\delta)$, 
\begin{align*}
\|f_0^n((\Phi^n_t)^{-1}(y+h))-& f_0^n((\Phi^n_t)^{-1}(y))\|_{L^2(\R)}\\
\leq & \|f_0^n((\Phi^n_t)^{-1}(y+h))-g_0((\Phi^n_t)^{-1}(y+h))\|_{L^2(\R)}\\
&+\|g_0((\Phi^n_t)^{-1}(y+h))-g_0((\Phi^n_t)^{-1}(y))\|_{L^2(\R)}\\
&+\|g_0((\Phi^n_t)^{-1}(y))-f_0^n((\Phi^n_t)^{-1}(y))\|_{L^2(\R)}\\
\leq & 2\|D(\Phi^n_t)^{-1}\|^{\frac12}_{L^{\infty}}\|f^n_0-g_0\|_{L^2}+ \|g_0((\Phi^n_t)^{-1}(y+h))-g_0((\Phi^n_t)^{-1}(y))\|_{L^2(\R)}\\
\leq &\frac{\delta}{25b^2}+b \Lip(g_0)h,
\end{align*}
Similarly,
\[  \|f^n_1(\Phi^n_1(\Phi^n_t)^{-1}(y+h))-f^n_1(\Phi^n_1(\Phi_t^n)^{-1}(y))\|_{L^2(\R)} \leq \frac{\delta}{25b^2}+ b^2\Lip(g_1)h. \]
Moreover
\begin{equation}
\|\eta^n((\Phi^n_t)^{-1}(y+h),t)-\eta^n((\Phi^n_t)^{-1}(y),t)\|_{L^{\infty}(0,1)}\leq2 e^{b_n}\,b_n^{4}\|v^n\|_{L^2(0,1,H^2)}h^{\frac12}\leq C  h^{\frac12},
\end{equation}
where $C:=2\exp(b)b^{4}(\sqrt{c}\|f_1-f_0\|_{L^2}+1)$.

Therefore,
\begin{align*}
\|f^n&(\tacka,t) -f^n(\tacka+h,t)\|_{L^2(\R)}\\
\leq &\|\eta^n((\Phi^n_t)^{-1}(\tacka+h),t)f_0^n((\Phi^n_t)^{-1}(\tacka+h))-\eta^n((\Phi^n_t)^{-1}(\tacka),t)f_0^n((\Phi^n_t)^{-1}(\tacka))\|_{L^2}\\
&+\|(1-\eta^n(\Phi_t^{-1}(\tacka+h),t))f_1^n(\Phi_1\Phi_t^{-1}(\tacka+h))-(1-\eta^n(\Phi_t^{-1}(\tacka),t))f_1^n(\Phi_1\Phi_t^{-1}(\tacka))\|_{L^2}\\
\leq& \|\eta^n\|_{L^{\infty}}\|f^n_0(\Phi^n_t)^{-1}(\tacka+h)-f^n_0(\Phi_t^n)^{-1}(\tacka)\|_{L^2}\\
&+ \|\eta^n((\Phi^n_t)^{-1}(\tacka+h),t)-\eta^n((\Phi^n_t)^{-1}(\tacka),t)\|_{L^{\infty}}\|f_0^n\comp (\Phi_t^n)^{-1}\|_{L^2}\\
&+\|1-\eta^n\|_{L^{\infty}}\|f^n_1(\Phi^n_1(\Phi^n_t)^{-1}(\tacka+h))-f^n_1(\Phi^n_1(\Phi_t^n)^{-1}(\tacka))\|_{L^2}\\
&+\|\eta^n((\Phi^n_t)^{-1}(\tacka+h),t)-\eta^n((\Phi^n_t)^{-1}(\tacka),t)\|_{L^{\infty}}\|f_1^n\comp\Phi^n_1\comp (\Phi_t^n)^{-1}\|_{L^2}\\
\leq &b^2 \left(\frac{\delta}{25b^2}+b\Lip(g_0)h \right)+C h^{\frac12}b\|f_0^n\|_{L^2}+ (1+b^2)\left(\frac{\delta}{25b^2}+b^2\Lip(g_1)h\right)+C h^{\frac12}b\|f^n_1\|_{L^2}\\
\leq &\frac{3\delta}{25}+2b^3(\Lip(g_0)+\Lip(g_1))h+C b (2+\|f_0\|_{L^2}+\|f_1\|_{L^2})h^{\frac12}.
\end{align*}
We  pick $h_0$, which depend on $f_0,f_1,g_0,g_1$, such that for all $t$, for all $h\in [0,h_0]$, and $n>N(\delta)$, \eqref{eq:unif_int} holds.
For $n\leq N(\delta)$, there exists $h_n$ such that for all $h\in [0,h_n]$, \eqref{eq:unif_int} holds for $f^n$. Take $\tilde h:=\min\{h_0,h_1,\dots,h_{N(\delta)}\}$. For all $t$, for all $h\in [0,\tilde h]$, and for all $n$, \eqref{eq:unif_int} holds.
\end{proof}

\section{Numerical Scheme}\label{sec:scheme}

We propose an iterative minimization scheme to find the minimizers of the action \eqref{eq:obj} over the admissible paths  \eqref{eq:adm}. We first present two convex sub-problems on the continuous level by fixing $v$ and $f$, respectively. By solving each of the sub-problems, the action functional decays monotonically. We then present our discretization scheme of finding the optimal path based on this problem splitting.

\subsection{Two sub-problems}
Recall that we are interested in minimizing 
\begin{align}\label{eq:obj}
 A_{\kappa, \lambda, \veps}(f,v,z) = \frac12 \int_0^1 \int_0^1   \kappa v^2 + \lambda v_x^2  + \varepsilon v_{xx}^2 + z^2    \, \dd x \dd t,  
\end{align} 
over the set of admissible paths defined by 
\begin{equation}\label{eq:adm}
\mathcal{A} = \{ (f,v,z) : f_t = -f_x v + z, \, v(0,\cdot) = v(1,\cdot) = 0,  \,f(\cdot,0) = f_0,\, f(\cdot, 1) = f_1 \}.
\end{equation}
The Euler--Lagrange equations for this variational problem are given in~\eqref{eq:elv}-\eqref{eq:f-ICs} under the assumption that $f_0,f_1 \in H^1((0,1))$. The numerical methods presented in this section and the experiments in Section~\ref{sec:experiments} are based on the assumptions that the signals are $C^2((0,1))$. Due to the
regularity results of Proposition \ref{prop:f_H1} and Remark \ref{rem:fsmooth} as well as the stability result proved in Proposition~\ref{prop:stability_geod}, we expect that the geodesics for regular signals can be used to approximate those for general signals.

We point out that among $(f,v,z)$, the three variables we optimize over, $z$ is determined by $f$ and $v$ due to the constraint set~\eqref{eq:adm}. However, it is still impractical to directly minimize~\eqref{eq:obj} over~\eqref{eq:adm} due to the nonlinear constraint. Next, we translate the optimization problem to a fixed-point problem by working with the system of Euler--Lagrange equations~\eqref{eq:elv}-\eqref{eq:f-ICs}. 

More specifically, we find a solution to the Euler--Lagrange equations through two convex optimization sub-problems. The method alternates between fixing $v$ while finding the optimal $(f,z)$ and fixing $f$ while searching the optimal $(v,z)$, both for problem~\eqref{eq:obj}-\eqref{eq:adm}. This method shares similar flavors with many existing optimization algorithms. First, it is related to the so-called block coordinate descent method~\cite{tseng2001convergence} since we alternatingly update $(f,z)$ and $(v,z)$, the coordinate blocks in our problem. Based on~\cite[P.~266]{luenberger2015linear}, our algorithm also has local convergence since the action function has a unique minimum in each coordinate block. The fact that our updated new $(f,z)$ or $(v,z)$ is the exact minimizer for each of the sub-problem, sharing similar features with ADMM~\cite{wang2019global}. The corresponding optimal $(f,z)$ or $(v,z)$ are weighted projections onto the linear constraints determined by \eqref{eq:elv} - \eqref{eq:f-ICs}
(with the other variable fixed). 

Lastly, our method shares the same spirit with the so-called sequential quadratic programming (SQP)~\cite{boggs1995sequential}. For quadratic programming with nonlinear constraints, SQP solves a sequence of optimization sub-problems using a linearization of the constraints. In our method, we achieve linearization of~\eqref{eq:adm} by fixing $v$ or $f$.  Next, we will discuss in detail the two important sub-problems.

\subsubsection{From $v$ to $(f,z)$}\label{subsec:v2fz}


First, for a given $v$, we consider the sub-problem of finding $(f,z)$ which minimize the action \eqref{eq:obj}
under the constraint that $(f,v,z) \in \mathcal A$ for the $v$ fixed. 
We note that this reduces to minimizing 
a convex (in fact, quadratic) objective functional under a linear constraint:
\begin{equation}\label{eq:sub2}
\min_{f,z} \frac12 \int_0^1 \int_0^1  z^2  \, \dd x \dd t,  \quad \text{s.t.}\quad (f,v,z) \in \mathcal{A}.
\end{equation}
The first-order optimality conditions are given by equations \eqref{eq:elz}, \eqref{eq:elf} and~\eqref{eq:f-ICs}:
\begin{align}\label{eq:sub1}
\begin{split}
z_t + (zv)_x & = 0,  \\
f_t + f_x  v - z & = 0,  \\
f(\tacka,0) =f_0,\,
\,& f(\tacka,1)=f_1. 
\end{split}
\end{align}

To solve~\eqref{eq:sub1}, we can use the Lagrangian approach. First, we can obtain a flow map $\Phi(x,t)$ solving~\eqref{eq:flow-map}, and an analytical solution for $z(\Phi(x,t) , t)$ 
presented in \eqref{eq:zJ}.
 Note that we still need to find the quantity $J$ in \eqref{eq:zJ}. To do so, we observe that, by Lemma \ref{lem:representation_sol}, $J(x,t) = \exp(-\int_0^t  v_x ( \Phi(x, s), s) d s)$ is the  weak  solution of the following  auxiliary initial value problem:
\[ J_t + (J v)_x = 0 \; \te{ on } [0,1]^2,\quad \te{with }  J (x, 0 ) \equiv 1.\]
Hence, by \eqref{eq:zJ}, we have an analytic formulation for $z( \Phi ( x, t) , t)$:
\begin{equation}\label{eq:zJ2}
z( \Phi ( x, t) , t) 
=  \left( f_1 ( \Phi(x, 1) ) - f_0 (x) \right) \, \frac{J(x,t)}{\int_{0}^{1} J(x,s)  \dd s}.
\end{equation}
We also have an analytic formulation for $f( \Phi ( x, t) , t)$ given by~\eqref{eq:f-formula},
\begin{align}
f(\Phi(x,t),t) 
 &= \left(1-\eta(x,t) \right) \,  f_0( x) +  \eta(x,t) \, f_1 ( \Phi ( x, 1)), \quad \te{where } \eta(x,t) =  \frac{\int_0^t J(x,s) \dd s}{\int_0^1 J(x,s) \dd s}. \label{eq:f-formula2}
\end{align}
\smallskip

As a final step, through a change of coordinate, we obtain functions $f$ and $z$ at the $(x,t)$ coordinate (which are used for $v$), rather than the flow map coordinate $\left(\Phi(x,t), t\right)$.
At the discrete level, this will amount to an interpolation step between the points where $f$ and $z$ are computed along the flow, and the desired grid points.

Combining all steps above, we have defined a continuous operator 
$$
\mathcal{G}_1: v\mapsto \left(z,f \right),
$$
which solves~\eqref{eq:sub1} and~\eqref{eq:sub2}. Note that we have $(f,v,z)\in \mathcal{A}$ automatically if $(f,z) = \mathcal{G}_1(v)$. If we denote the $(f,v,z)$ from the previous step by $(f^{\text{old}},v^{\text{old}}, z^{\text{old}} )$, where $(f^{\text{old}},v^{\text{old}}, z^{\text{old}}) \in \mathcal{A}$, we then have
\[
A_{\kappa, \lambda, \veps}(f,v^{\text{old}},z) \leq A_{\kappa, \lambda, \veps}(f^{\text{old}},v^{\text{old}},z^{\text{old}}).
\]
The steps above yield functions $(f,v^{\text{old}},z)$ satisfying~\eqref{eq:v-BCs}, \eqref{eq:elz}, \eqref{eq:elf}, and~\eqref{eq:f-ICs} for the given $v^{\text{old}}$, but they do not necessarily satisfy~\eqref{eq:elv} in the Euler--Lagrange equations. Otherwise, we have found a set of solutions satisfying the first-order optimality conditions of~\eqref{eq:obj}. 

\subsubsection{From $f$ to $(v,z)$}\label{subsec:f2vz}
Given $f$ from the previous step, we consider the second sub-problem of finding the pair $(v^\te{new},z^\te{new})$ that minimizes the action \eqref{eq:obj} under the constraint that $(f,v,z) \in \mathcal A$. This 
again is a quadratic optimization problem under linear constraint: 
\begin{equation}\label{eq:sub4}
\min_{v,z} \frac12 \int_0^1 \int_0^1   \kappa v^2 + \lambda v_x^2  + \varepsilon v_{xx}^2 + z^2   \, \dd x \dd t, \quad \text{s.t.~}  (f,v,z) \in \mathcal{A}.
\end{equation}

Using the Euler--Lagrange equation \eqref{eq:elv} and the constraint $z = f_t + vf_x$ yields the following fourth order boundary value problem for $v$:
\begin{align}\label{eq:sub3}
\begin{split}
\varepsilon v_{xxxx} -\lambda v_{xx} + (\kappa  + |f_x|^2 ) v  & = -f_t \,f_x  \quad \te{on } (0,1)^2, \\
 v & = 0  \quad \te{ on } \{0,1\} \times (0,1), \\
 v_{xx} & = 0 \quad \te{ on }  \{0,1\} \times (0,1).
\end{split}
\end{align}
Given $f$ and the solution $v$, we also obtain $z$. Let us denote the solution operator by  $\mathcal{G}_2: f \mapsto \left(v^\te{new},z^\te{new} \right)$.

We note that 
\begin{equation}\label{eq:local_search}
A_{\kappa, \lambda, \veps}(f,v^{\text{new}},z^{\text{new}}) \leq A_{\kappa, \lambda, \veps}(f,v^\te{old},z).
\end{equation}
\medskip

Therefore, combining $\mathcal{G}_2$ with the previous step in Section~\ref{subsec:v2fz}, we have the following inequalities with respect to the action functional:
\[
A_{\kappa, \lambda, \veps}(f^{\text{new}},v^{\text{new}},z^{\text{new}}) \underbrace{\leq}_{(\text{$v^{\text{new}},z^{\text{new}}) = \mathcal{G}_2(f)$},\,f^{\text{new}} = f} A_{\kappa, \lambda, \veps}(f,v^{\text{old}},z)\underbrace{\leq }_{\text{$(f,z) = \mathcal{G}_1(v^{\text{old}})$}} A_{\kappa, \lambda, \veps}(f^{\text{old}},v^{\text{old}},z^{\text{old}}),
\]
where  $(f^{\text{new}},v^{\text{new}},z^{\text{new}}) , (f,v^{\text{old}},z), (f^{\text{old}},v^{\text{old}},z^{\text{old}}) \in \mathcal{A}$, all satisfying the constraints. If we define a new operator $\mathcal{G}$ by composing $\mathcal{G}_2 $ with $\mathcal{G}_1 $, i.e.,
\begin{equation}
    \mathcal{G} = \mathcal{G}_2 \circ \mathcal{G}_1: \mathcal{A} \mapsto \mathcal{A},
\end{equation}
it gives an update formula after which the action functional decays:
\begin{equation}\label{eq:energy_decay} 
A_{\kappa, \lambda, \veps}\left(    \mathcal{G}  (f,v,z) \right) \leq A_{\kappa, \lambda, \veps}(f,v,z).
\end{equation}
We can then repetitively applying $\mathcal{G}$ until finding a set of solution $ (f^*,v^*,z^*)$ where $  \mathcal{G}  (f^*,v^*,z^*) = (f^*,v^*,z^*)$. That is, $(f^*,v^*,z^*)$ is a fixed point of $\mathcal{G}$, while~\eqref{eq:energy_decay} indicates the contractivity of the fixed-point operator with respect to the action functional. It is easy to verify that $ (f^*,v^*,z^*) $ also solves the Euler--Lagrange equations~\eqref{eq:elv}-\eqref{eq:f-ICs}.

\begin{algorithm}
\caption{An iterative scheme for minimizing~\eqref{eq:obj}.\label{alg:picard-0}}
\begin{algorithmic}[1]
\State Given an initial guess $(f^{(0)},v^{(0)}, z^{(0)}) \in \mathcal{A}$, maximum number of iterations $N$, tolerance $\delta >0$.
\For{$n= 1$ to $N$} 
    \State Compute $(\tilde f, \tilde z ) = \mathcal{G}_1(v^{(n)} )$ with $\mathcal{G}_1$ described in Section \ref{subsec:v2fz} and set $f^{(n+1)} = \tilde f $.
    \State  Set $(v^{(n+1)}, z^{(n+1)} )= \mathcal{G}_2(f^{(n+1)})$ with $\mathcal{G}_2$ described in Section \ref{subsec:f2vz}.
    \If { $|A_{\kappa, \lambda, \veps}(f^{(n+1)},v^{(n+1)},z^{(n+1)} ) - A_{\kappa, \lambda, \veps}(f^{(n)},v^{(n)},z^{(n)} ) | < \delta $} 
    \State Return $\left( f^{(n+1)}, z^{(n+1)}  , v^{(n+1)} \right)$ and the minimum action value; \textbf{Break}. \EndIf
\EndFor
\end{algorithmic}
\end{algorithm}

\begin{algorithm}
\caption{An iterative scheme for minimizing~\eqref{eq:obj} with damping.\label{alg:picard}}
\begin{algorithmic}[1]
\State Given an initial guess $(f^{(0)},v^{(0)}, z^{(0)}) \in \mathcal{A}$, maximum number of iterations $N$, tolerance $\delta >0$.
\For{$n= 1$ to $N$} 
    \State Compute $(\tilde f, \tilde z ) = \mathcal{G}_1(v^{(n)} )$ and set $f^{(n+1)} = \alpha_1 \tilde f + (1-\alpha_1) f^{(n)}$, with $\alpha_1$ given by Algorithm~\ref{alg:btls}\\
    \hspace*{14pt} and $\mathcal{G}_1$ described in Section \ref{subsec:v2fz}.
    \State  Compute $(\widehat v, \widehat z ) = \mathcal{G}_2(f^{(n+1)} )$ and  set $(v^{(n+1)}, z^{(n+1)}  )= \alpha_2 (\widehat v, \widehat z ) + (1-\alpha_2) (v^{(n)}, z^{(n+\frac{1}{2})})$, with \\ 
     \hspace*{14pt} $\alpha_2$ given by Algorithm~\ref{alg:btls}
     and $\mathcal{G}_2$ described in Section \ref{subsec:f2vz}.
    \If { $A_{\kappa, \lambda, \veps}\left(f^{(n+1)},v^{(n+1)},z^{(n+1)} \right) > A_{\kappa, \lambda, \veps}\left(f^{(n)},v^{(n)},z^{(n)} \right) - \delta $} 
    \State Return $\left( f^{(n+1)}, z^{(n+1)}  , v^{(n+1)} \right)$ and the minimum action value; \textbf{Break}. \EndIf
\EndFor
\end{algorithmic}
\end{algorithm}

\begin{algorithm}
\caption{Back-tracking line search for the damping parameter.\label{alg:btls}}
\begin{algorithmic}[1]
\State Given the old iterate $(\bar f,\bar v,\bar z)$, the proposed new iterate $(\tilde f,\tilde v,\tilde z)$, the objective function $A_{\kappa, \lambda, \veps}$, and the maximum number of search steps $\widetilde N$. Set $\alpha = 1$, $\bar{A} = A_{\kappa, \lambda, \veps}(\bar f,\bar v,\bar z)$ and $\text{FLAG} = 0$.
\For{$i = 1$ to $\widetilde N$} 
\If{$A_{\kappa, \lambda, \veps}(f_\alpha, v_\alpha, z_\alpha) < \bar{A}$ where $(f_\alpha, v_\alpha, z_\alpha) = (1-\alpha)(\bar f,\bar v,\bar z) + \alpha(\tilde f,\tilde v,\tilde z)$}
\State {Return $\alpha$ and set $\text{FLAG} = 1$; \textbf{Break}.}
\EndIf
\State $\alpha \leftarrow \alpha/2$.
\EndFor
\If{$\text{FLAG} = 0$} \State Return $\alpha =0$. \EndIf 
\end{algorithmic}
\end{algorithm}

\begin{algorithm}
\caption{Searching for a path minimizing action \eqref{eq:obj} via different initializations. \label{alg:init}}
\begin{algorithmic}[1]
\State Given the maximum number of iterations $N \in \mathbb{N}^+$, $\delta >0$, $k_{max} \in \mathbb{N}$.
\For{$k = 0$ to $k_{max}$} 
    \State Use the prominence-based matching initialization described in Section~\ref{subsec:init} with parameter $k$ and  \\
    \hspace*{14pt} obtain $(f^{(0)},v^{(0)}, z^{(0)})$.
    \State Run Algorithm~\ref{alg:picard} with $N$ iterations and tolerance $\delta$. Obtain the minimum action value $\mathcal{J}(k)$.
\EndFor
\State Find $k^* = \text{argmin}\,\mathcal{J}(k)$.
\State Return the optimal path $(f,v,z)$ and its action value $\mathcal{J}(k^*)$ for initialization with parameter $k^*$.
\end{algorithmic}
\end{algorithm}

\subsection{The discrete scheme} \label{sec:num-discrete}
Next, we use a simple first-order numerical scheme to solve the two sub-problems discussed above. If the signals $f_0$ and $f_1$ are known to be smooth, higher-order discretization schemes would be recommended for better efficiency and accuracy. If the signals are discontinuous,  first-order schemes, on the other hand, are known to mitigate the Gibbs phenomenon~\cite{lax2006gibbs} that higher-order methods may suffer. 


To evaluate $(f,z) = \mathcal{G}_1(v)$,  we compute $z ( \Phi ( x, t), t )$ and $f ( \Phi ( x, t), t )$  using first-order numerical integration based on~\eqref{eq:zJ2} and~\eqref{eq:f-formula2}. We obtain $z(x,t)$ and $f(x,t)$ from $z ( \Phi ( x, t), t )$ and $f ( \Phi ( x, t), t )$ through first-order numerical interpolation in the $x$ variable alone. This step can be implemented in a  parallel fashion.

To evaluate $(v,z) = \mathcal{G}_2(f)$,  we have a fourth-order PDE for $v(\cdot, t)$  for every fixed $t$, with $v=0$ and $v_{xx} =0$ (if $\varepsilon \neq 0$) on the boundaries $\{0,1\}$. 

Consider a uniform mesh over the spatial domain $[0,1]$ and the time domain $[0,1]$. The  spatial spacing $\Delta x = 1/{N_x}$ and the time-domain spacing $\Delta t = 1/{N_t}$. 
Let ${\bf{v}}_j \in \mathbb{R}^{N_x+1}$ be a vector approximating $ [ v(0, j \Delta t),\ldots, v(i\Delta x, j \Delta t),\ldots, v(1, j \Delta t)]^\top$. The PDE~\eqref{eq:sub3} then becomes the following linear system under the finite-difference discretization,
\[
\left( A + \text{diag}\left( {\bf w}_j \odot {\bf w}_j \right) \right) \,{\bf{v}}_j = - \boldsymbol{\tau}_j \odot {\bf w}_j,
\]
where  $\odot$ denotes the Hadamard product, $\text{diag}({\bf x})$ denotes a diagonal matrix with elements of vector ${\bf x}$ being its diagonal entries,  $A\in \mathbb{R}^{(N_x+1) \times (N_x+1)}$ is given by
\begingroup\makeatletter\def\f@size{8}\check@mathfonts
\[
\begin{bmatrix}
1 &   &   &  &  &  &  \\ \\
-\frac{4\varepsilon}{\Delta x^4} - \frac{\lambda}{\Delta x^2}   & \kappa + \frac{2\lambda}{\Delta x^2} + \frac{5\varepsilon}{\Delta x^4} & -\frac{4\varepsilon}{\Delta x^4}- \frac{\lambda}{\Delta x^2} & \frac{\varepsilon}{\Delta x^4} & & & \\ \\
\frac{\varepsilon}{\Delta x^4} & -\frac{4\varepsilon}{\Delta x^4} - \frac{\lambda}{\Delta x^2}   & \kappa + \frac{2\lambda}{\Delta x^2} + \frac{6\varepsilon}{\Delta x^4} & -\frac{4\varepsilon}{\Delta x^4}- \frac{\lambda}{\Delta x^2} & \frac{\varepsilon}{\Delta x^4} &  &  \\
 &   &   &   &   &   &  \\
 &    \ddots &  \ddots &  \ddots &  \ddots &  \ddots  & \\
 &  &   \frac{\varepsilon}{\Delta x^4} & -\frac{4\varepsilon}{\Delta x^4}- \frac{\lambda}{\Delta x^2}   & \kappa +  \frac{2\lambda}{\Delta x^2} +\frac{6\varepsilon}{\Delta x^4}  & -\frac{4\varepsilon}{\Delta x^4} - \frac{\lambda}{\Delta x^2}  & \frac{\varepsilon}{\Delta x^4}  \\ \\
&   & &   \frac{\varepsilon}{\Delta x^4} & -\frac{4\varepsilon}{\Delta x^4}- \frac{\lambda}{\Delta x^2}   & \kappa + \frac{2\lambda}{\Delta x^2} + \frac{5\varepsilon}{\Delta x^4}  & -\frac{4\varepsilon}{\Delta x^4} - \frac{\lambda}{\Delta x^2}  \\ \\
 &  &   &  &  &  & 1 \\
\end{bmatrix},
\]
\endgroup
and ${\bf{w}}_j, {\boldsymbol{\tau}}_j  \in \mathbb{R}^{N_x+1}$, approximating  $f_x(\cdot, t_j)$ and $f_t(\cdot, t_j)$, respectively, are given by
\begin{align*}
{\bf{w}}_j &=  \frac{1}{\Delta x}\begin{bmatrix} 0 &    \left( f(x_3,t_j) - f(x_2,t_j) \right) &  \ldots & \left( f(x_{N_x+1},t_j) - f(x_{N_x},t_j)\right) & 0 \end{bmatrix}^\top,\\
{\boldsymbol{\tau}}_j &= \frac{1}{\Delta t}\begin{bmatrix} 0 &    \left( f(x_2,t_{j+1}) - f(x_{2},t_j) \right) &  \ldots & \left( f(x_{N_x},t_{j+1}) - f(x_{N_x},t_{j})\right) &  0 \end{bmatrix}^\top,
\end{align*}
with $f(x_i,t_j) = f(i \Delta x, j \Delta t)$. We remark that the first and last elements of the right-hand side are set to be zero while the first and last rows of the left-hand side are also modified. These two linear equations are to enforce $v=0$ on the boundary; see~\eqref{eq:sub3}. The fact that $v_{xx} = 0$ on the boundary implies that the ghost points $v(-\Delta x, \cdot) = - v(\Delta x, \cdot)$, and $v(1+\Delta x, \cdot) = - v(1-\Delta x, \cdot)$, which is used in the second and the $N_x$-th rows of $A$. The remaining $N_x-3$ equations in the linear system are to enforce the linear PDE~\eqref{eq:sub3}. Note that with a given $f$, we can solve for $\{{\bf v}_j\}$ in parallel for all time $\{t_j\}$, or construct a large sparse linear system with respect to ${\bf v} = [{\bf v}_1^\top \ldots {\bf v}_{N_t+1}^\top]^\top$ in one single sparse linear solve. When $f_0, f_1$ are smooth, it is preferable to use the central difference method to obtain ${\boldsymbol{\tau}}_j$. 

Now we have two steps: $\mathcal{G}_1: v \mapsto (f,z)$ through first-order numerical integration and interpolation, 
and $\mathcal{G}_2: f\mapsto (v,z)$ through a linear PDE solver. On the continuous level, we have a monotonic energy decay based on~\eqref{eq:energy_decay}. Algorithm~\ref{alg:picard-0} combines these two steps to iteratively find approximate local minimizers of \eqref{eq:obj}.

While this algorithm works well for many signals, on the discrete level, due to numerical errors from interpolation, integration, and the PDE solver, \eqref{eq:energy_decay} may not hold for every iteration of the fixed-point update. To ensure that the \emph{discretized} variational problem still has a monotonically decaying action functional, we can introduce damping parameter $\alpha_1$ and $\alpha_2$, similar to the step size in a gradient descent algorithm; see Algorithm~\ref{alg:picard} for details. We remark that most of the time, $\alpha_1= \alpha_2 = 1$ is sufficient to ensure decay of the action value, which is the case on the continuous level. Only when it is close to the target solution, and the numerical errors dominate, one may observe that the action value no longer monotonically decreases when iterating between the two sub-problems. There damping helps obtain more accurate minima. We use back-tracking line search with a shrinking factor $1/2$ to find a proper damping coefficient; see details in Algorithm~\ref{alg:btls}.

\subsection{Finding a good initialization}\label{subsec:init}
We remark that the fixed points of $\mathcal{G}$ solve the Euler--Lagrange equations~\eqref{eq:elv}-\eqref{eq:f-ICs}, but they may not minimize~\eqref{eq:obj} over~\eqref{eq:adm} due to the possible local minima. In other words, the fixed-point operator $\mathcal{G}$ has multiple fixed points. One can start with different initial guesses and investigate the convergence behavior while choosing the solution with the smallest objective function.

We propose two different types of initial guesses.
\begin{enumerate}
    \item \textbf{Zero-velocity initialization.} We set $v^{(0)} (x,t) \equiv 0$, and compute  $(f^{(0)},z^{(0)}) = \mathcal{G}_2(v^{(0)})$. Note that in this case, we have $f^{(0)}(x,t) = (1-t) f_0(x) + t f_1(x)$ and $z^{(0)}(x,t) =  f_1(x) - f_0(x)$.
    \item \textbf{Prominence-matching initialization.} 
    We expect that if there exists a path with action that is substantially smaller than linear interpolation, that path will match nearby large peaks. We observe that matching the tallest peaks is not stable under perturbations as there may be spurious nearby peaks, for example, due to oscillations as in Figure \ref{fig:HFsignal165-Yunan}. For this reason, we use a more stable notion of how large the peaks are. Namely, we use the notion of \emph{prominence} coming from topography, which describes how large the peaks are compared to their surroundings. The prominence of a signal is defined as the least drop in height necessary to get to another local maximum with a higher value.
    Consider a positive integer $k$. For the given $f_0$ and $f_1$, we each select $k$ local maxima with the largest $k$ prominence.   The location of the local maxima are denoted by $\{x_i\}$ and $\{y_i\}$, $1\leq i \leq k$, respectively. We then construct a map $T(x)$ such that $T(x_i) = y_i$ for each $i$, and $T(0) = 0$, $T(1) = 1$. We use linear interpolation to define its function value for $x \in (0,1)\setminus \{x_i\}$. We then set the initial velocity to be $v^{(0)}(x,t) = T(x)-x$,  which is constant in time, and use $(f^{(0)},z^{(0)}) = \mathcal{G}_1(v^{(0)})$ as the initial guess for $f$ and $z$. Also, if the minima in $f_0$ and $f_1$ are more significant than the maxima, one can also initialize by matching the prominence of $-f_0$ and $-f_1$.
\end{enumerate}
The zero-velocity initialization is equivalent to a degenerate case of the prominence-matching initialization where $k=0$. In this scenario, we have $T(x) = x$ through linear interpolation between $T(0) = 0$ and $T(1) = 1$. Later, we will refer to the ``zero-velocity initialization''  as $k=0$.
We also comment that the initialization is different for various integer $k$, which may lead to different convergence behavior and local minima of the action functional~\eqref{eq:obj}. We suggest trying for a few $k$ values. While there are many variants on how one can incorporate the prominence-matching initialization into the optimization scheme, we outline an Algorithm~\ref{alg:init} as an example, which we use to produce the numerical results in Section~\ref{sec:experiments}. To be more efficient, one can use fewer iterations when searching for an initialization compared to running the entire Algorithm~\ref{alg:picard-0} or Algorithm~\ref{alg:picard} to find the minimizer of the optimization problem.



\begin{figure}
    \centering 
       \subfloat[Large bumps are much bigger than  small ones; Horizontal transport dominates.]{
           \includegraphics[width = 0.5\textwidth]{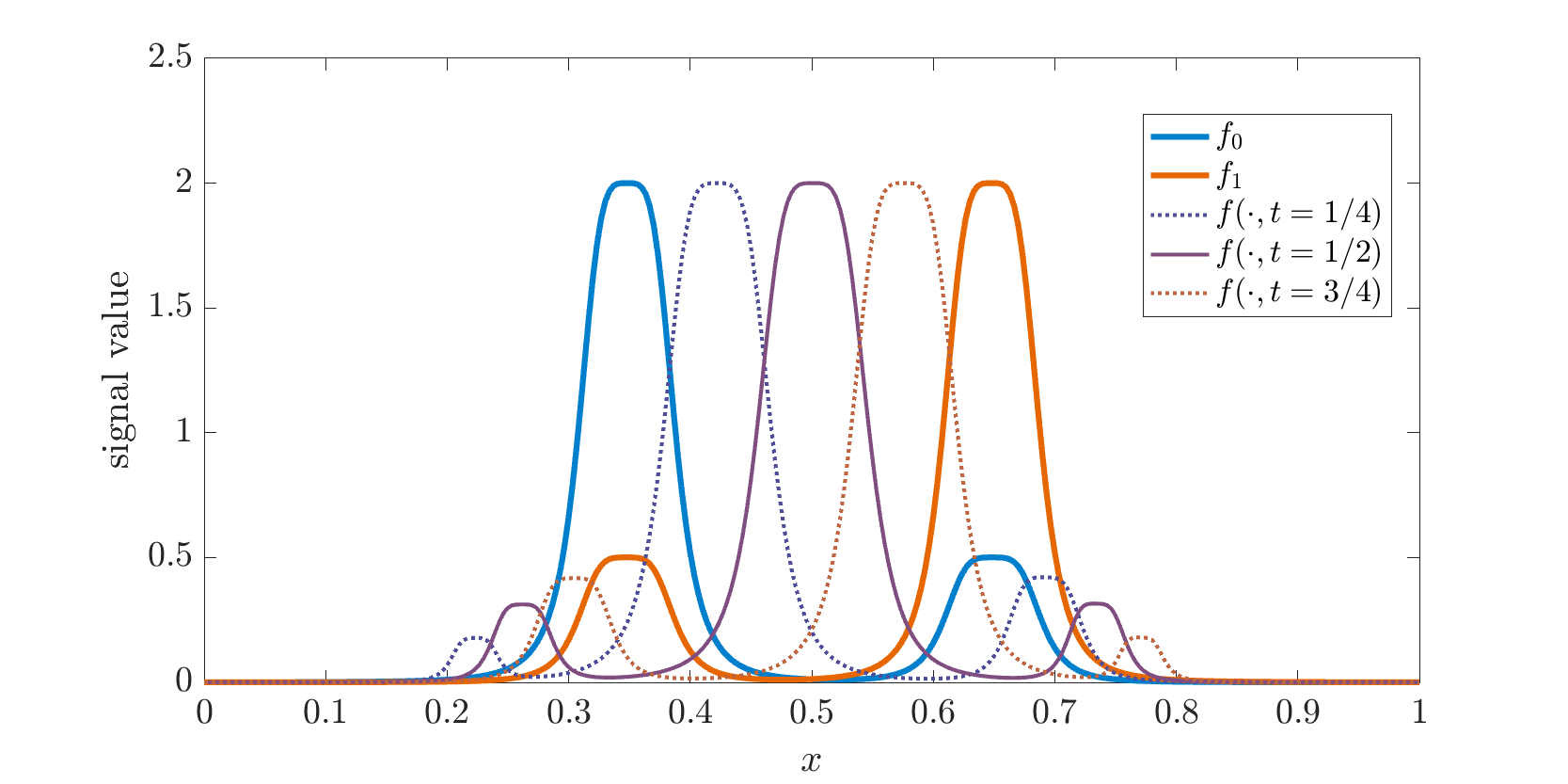}\label{fig:161} }
        \subfloat[Bumps are of comparable sizes; Vertical change dominates.]{
           \includegraphics[width = 0.5\textwidth]{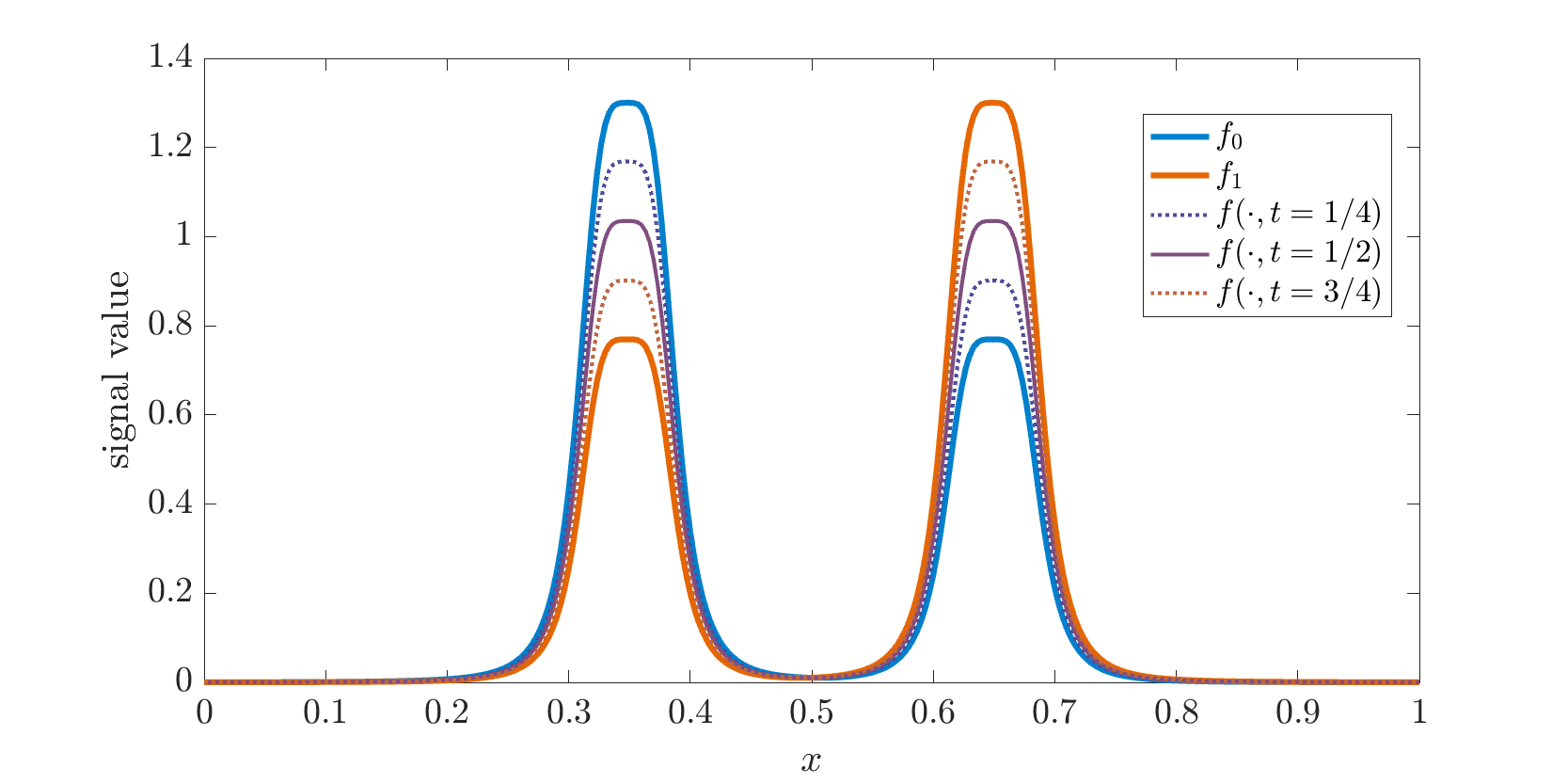}\label{fig:162}}
       \\
    \subfloat[For appropriate ratio of bump heights, both dominant transport mechanisms produce the same action.]{
           \includegraphics[width = 0.5\textwidth]{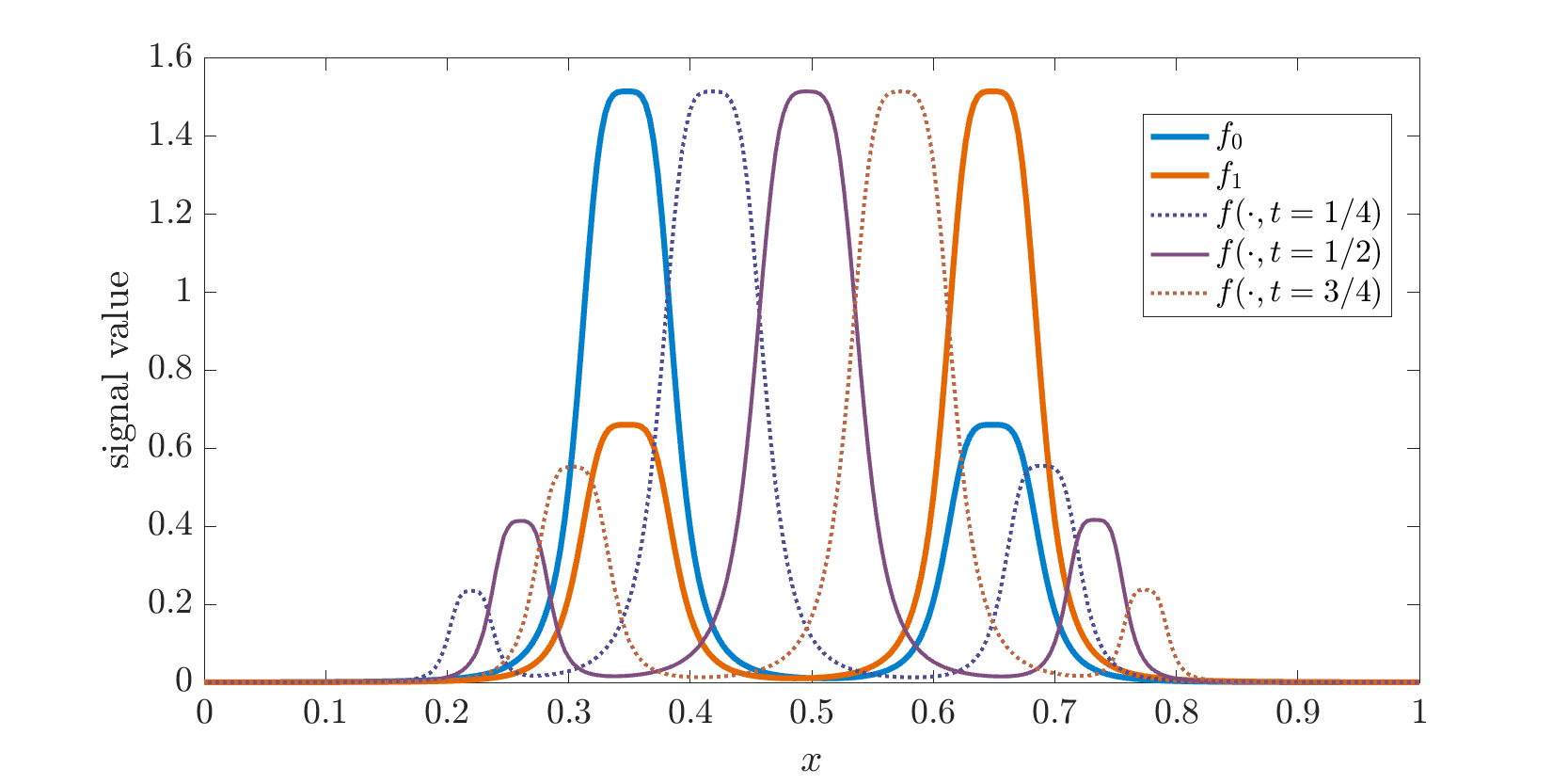}
           \includegraphics[width = 0.5\textwidth]{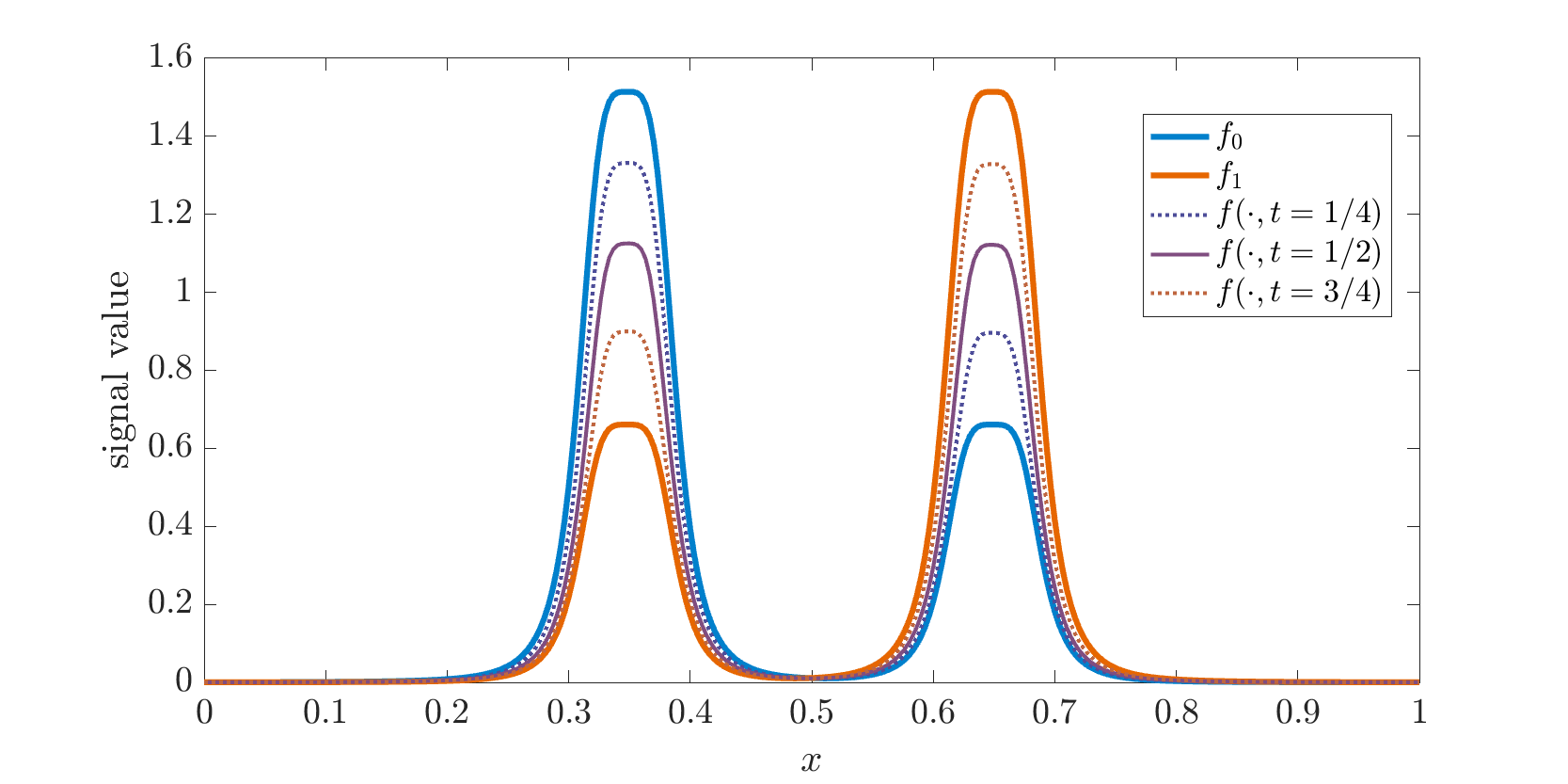}
       \label{fig:163&164}}
    \caption{Example of the nonuniqueness of the length-minimizing geodesic. We set $\kappa = 0.02$, $\lambda = 0.001$, and $\veps = 0.002$. We use $300$ spatial and $290$ time intervals.}
    \label{fig:twobumps-Yunan} 
\end{figure}

\begin{figure}
    \centering
    \subfloat[geodesics based on the HV geometry]
    {\includegraphics[width = 0.75\textwidth]{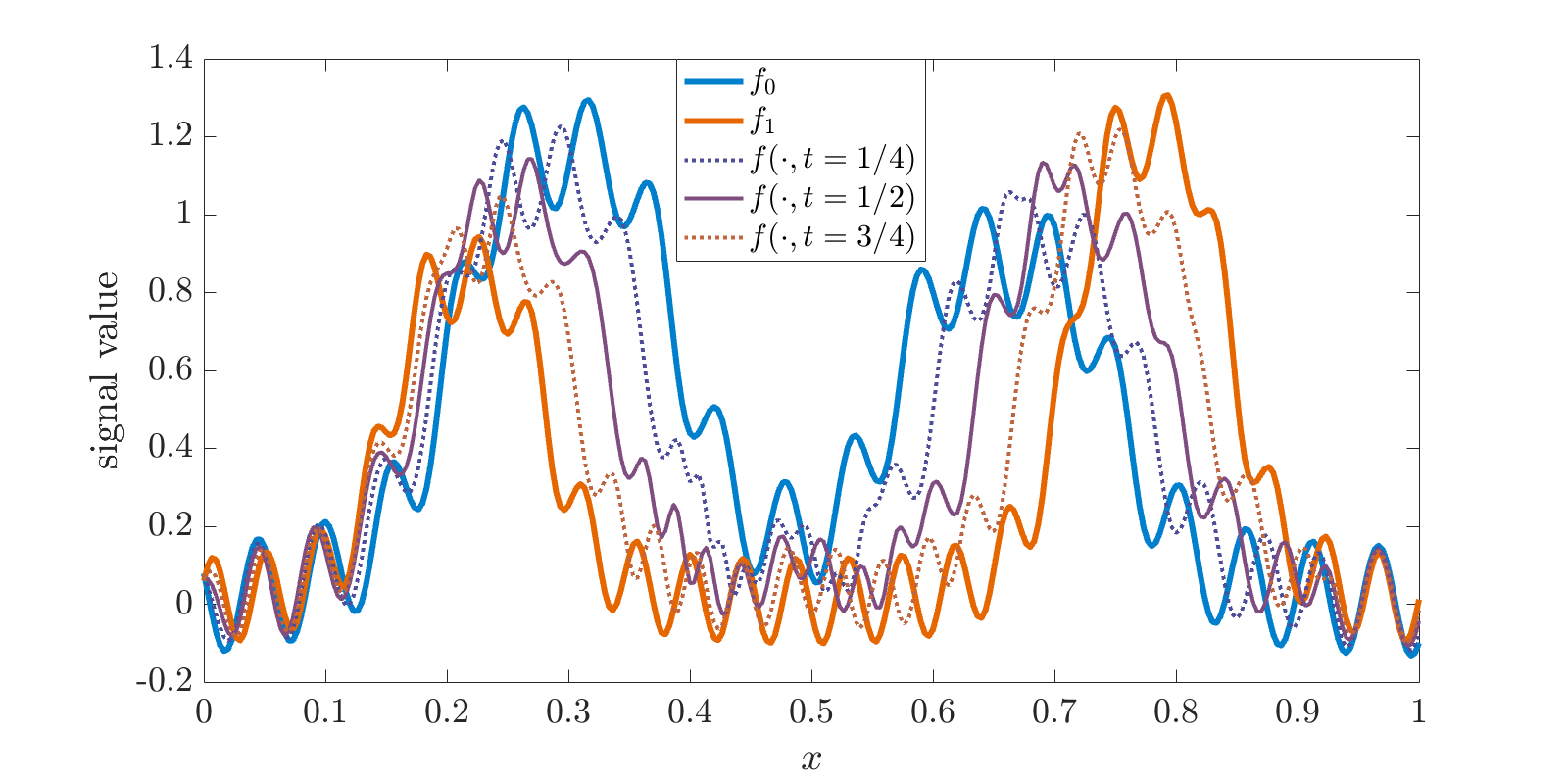}\label{fig:165-geo}}\\
    \subfloat[flow map for the initial velocity (red trajectories denote the initial matching of the most prominent two peaks between $f_0$ and $f_1$)]{\includegraphics[width = 0.75\textwidth]{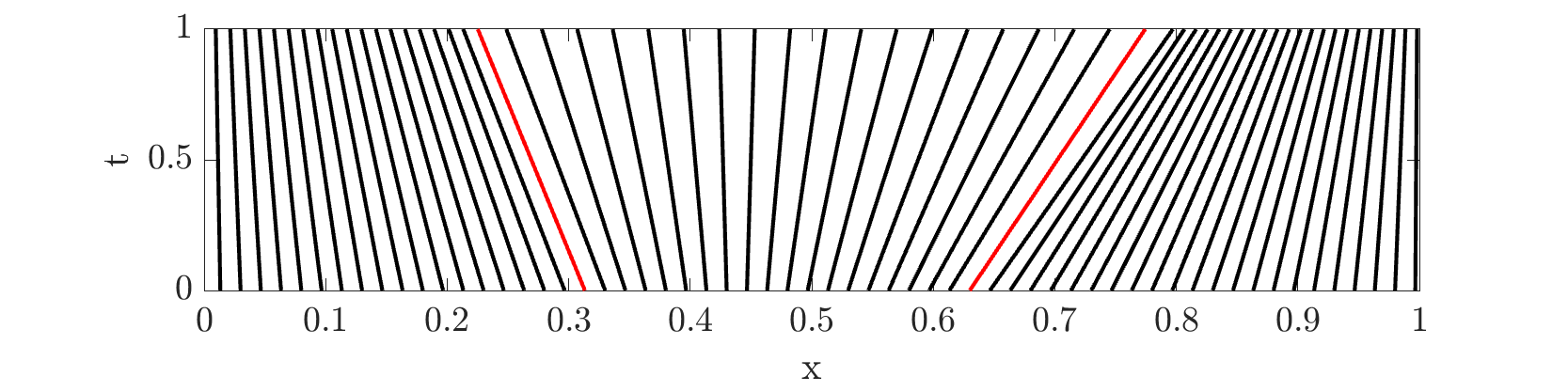}\label{fig:165-init}}\\
   \subfloat[flow map for the optimal velocity]{\includegraphics[width = 0.75\textwidth]{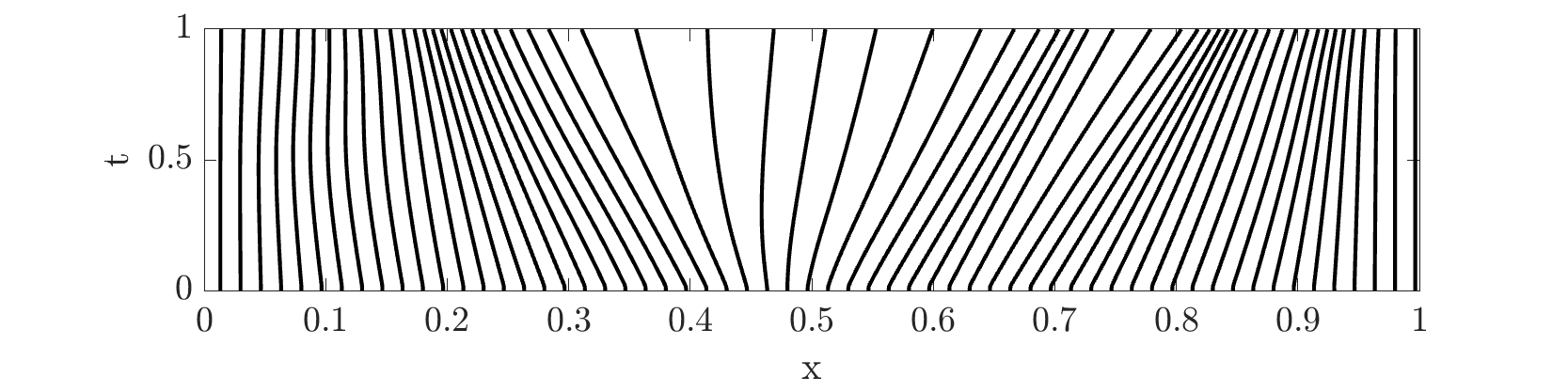}\label{fig:165-final}}
     \caption{We use prominence-matching initialization to find the action-minimizing path. We show in (A) the initial signal (blue), final signal (red), and signals at $t=0.25$, $t=0.5$, and $t=0.75$ along the computed geodesic for hyperparameters $\kappa = 10^{-3}$, $\lambda = 5\times 10^{-5}$, and $\veps = 2.5\times 10^{-5}$. We used $300$ space intervals and $290$ time intervals. The flow for the initial velocity is shown in (B), and the flow for the optimal velocity is shown in (C).}
    \label{fig:HFsignal165-Yunan}
\end{figure}

\section{Numerical Experiments}\label{sec:experiments}
In this section, we present a few  examples illustrating the geodesic using the HV geometry\footnote{The codes based on the numerical scheme described in Section~\ref{sec:scheme} that reproduce these examples can be found at \url{https://github.com/yunany/Compute-HV-distance-between-signals.git}.}. Throughout this section, we plot the source signal $f_0$ in blue and the target signal $f_1$ in red, while their barycenter under the HV geometry is shown using the color purple. We use dashed lines to indicate the signals at $t=\frac14$ and $t=\frac34$.

\begin{figure}
    \centering
    \includegraphics[width = 0.6\textwidth]{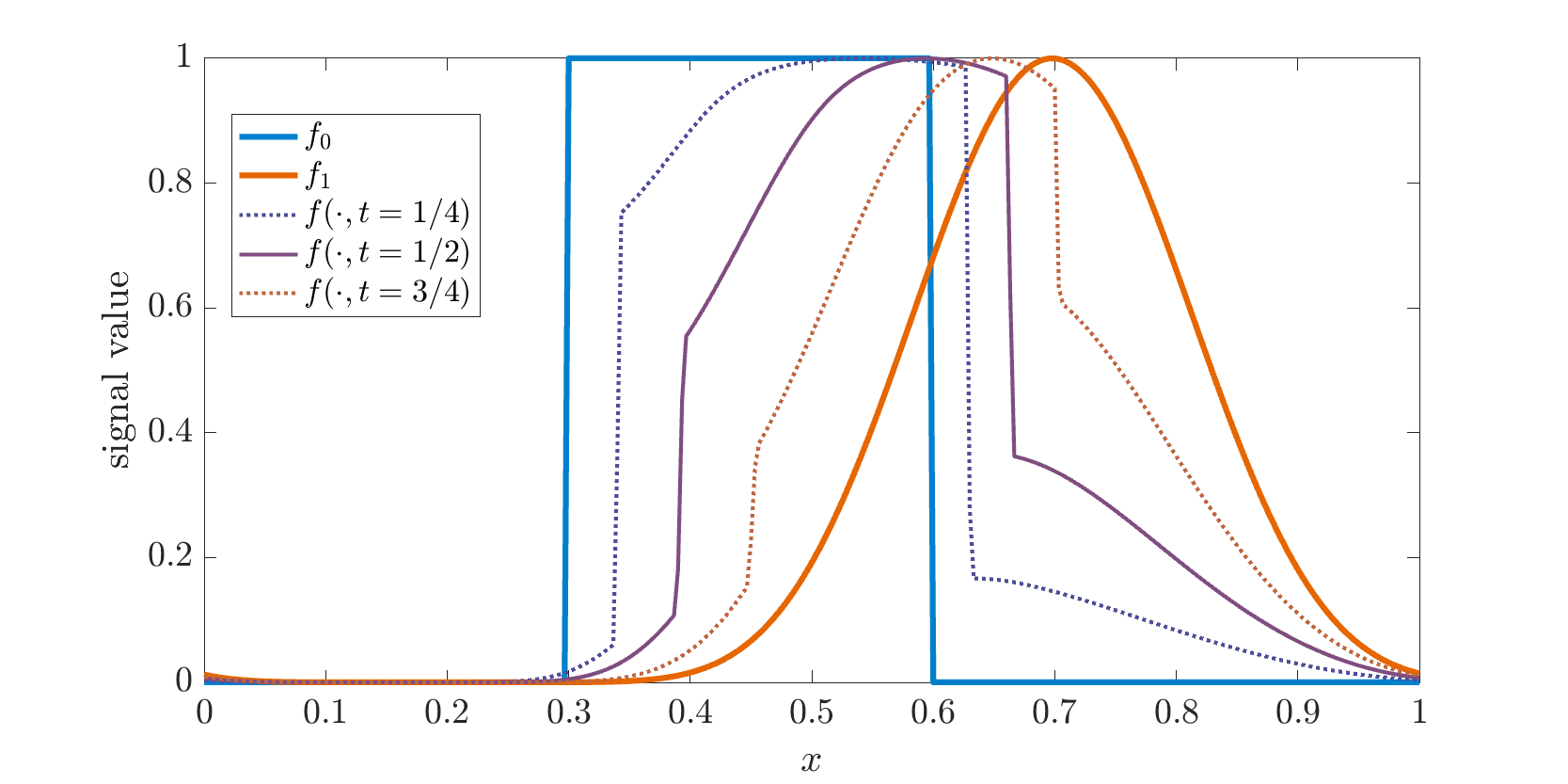}
    \caption{Algorithm allows for non-smooth data. Here, we use the hyperparameters $\kappa=0.02$, $\lambda=0.001$, $\veps=0.002$,  $300$ space intervals, and $290$ time intervals.}
    \label{fig:box_signal14}
\end{figure}

\begin{example} \emph{Nonuniqueness of minimizing geodesics.}
We consider an example where the source and target signals have two bumps. We use $300$ spatial and $290$ time intervals to discretize the space-time domain. For all plots in Figure~\ref{fig:twobumps-Yunan}, we set the hyperparameters $\kappa = 0.02$, $\lambda = 0.001$, and $\veps = 0.002$. In Figure~\ref{fig:161}, each signal has a large bump and a small bump. The large bumps are much bigger. The geodesic is dominated by horizontal transport. In Figure~\ref{fig:162}, the two bumps in each signal are of comparable size while their locations remain the same. The geodesics, in this case, is dominated by vertical changes instead of horizontal transport in the previous example. Finally, by adjusting the ratio of the bump heights while fixing their locations, we find a scenario where horizontal transport and vertical changes result in the same action value; see Figure~\ref{fig:163&164}. That is, we found local minimizers, which we believe to be global, where the paths have the same action. Thus, we believe the geodesics are not unique, indicating that the signal space is, at least partly, positively curved.
\end{example}


\begin{example}\emph{Bumps with high frequency perturbations.}  
Here we use the prominence-matching initialization introduced in the previous section to find a good starting point for our iterative scheme to converge to the global minimizer of the action function~\eqref{eq:obj}. In Figure~\ref{fig:165-geo},  we show the geodesics for hyperparameters $\kappa = 10^{-3}$, $\lambda = 5\times 10^{-5}$, and $\veps = 2.5\times 10^{-5}$. We used $300$ space intervals and $290$ time intervals. In Figure~\ref{fig:165-init}, we show the flow map based on the prominence-matching initial velocity, which maps the source signal's two most prominent peaks to the target signal's two most prominent peaks. The trajectories matching the peaks are indicated in red, while the remaining trajectories are obtained by linear interpolation.  
The flow map with respect to the final converged velocity is shown in Figure~\ref{fig:165-final}, which is relatively close to Figure~\ref{fig:165-init}. 
\end{example}

\begin{example} \emph{Signal with discontinuities.}
In Figure~\ref{fig:box_signal14}, we consider a  non-smooth source signal and a smooth target signal. Here, we use the hyperparameters $\kappa=0.02$, $\lambda=0.001$, and $\veps=0.002$. We used $300$ space intervals and $290$ time intervals.  The finite difference discretization on a static mesh implicitly regularizes the signal $f_0$. Since we use a first-order numerical scheme, the computed solutions do not suffer from the Gibbs phenomenon as reflected by the geodesic in Figure~\ref{fig:box_signal14}. Moreover, we also observe the gradual transition between the discontinuous feature and the discontinuous feature both in horizontal and vertical directions.
\end{example}


\begin{example} \emph{Growth and expansion.} 
In Figure~\ref{fig:c5-Yunan}, we consider a ``growth'' example where the target signal is much bigger in width and height than the source signal. Here, we use the hyperparameters $\kappa=0.2$, $\lambda=0.01$, and $\veps=0.02$. We used $300$ space intervals and $290$ time intervals. We comment that this example is somewhat sensitive to the choice of hyperparameters, which directly affects the location of the barycenter between $f_0$ and $f_1$ under the HV geometry.
\end{example}
\begin{figure}
    \centering
    \includegraphics[width = 0.6\textwidth]{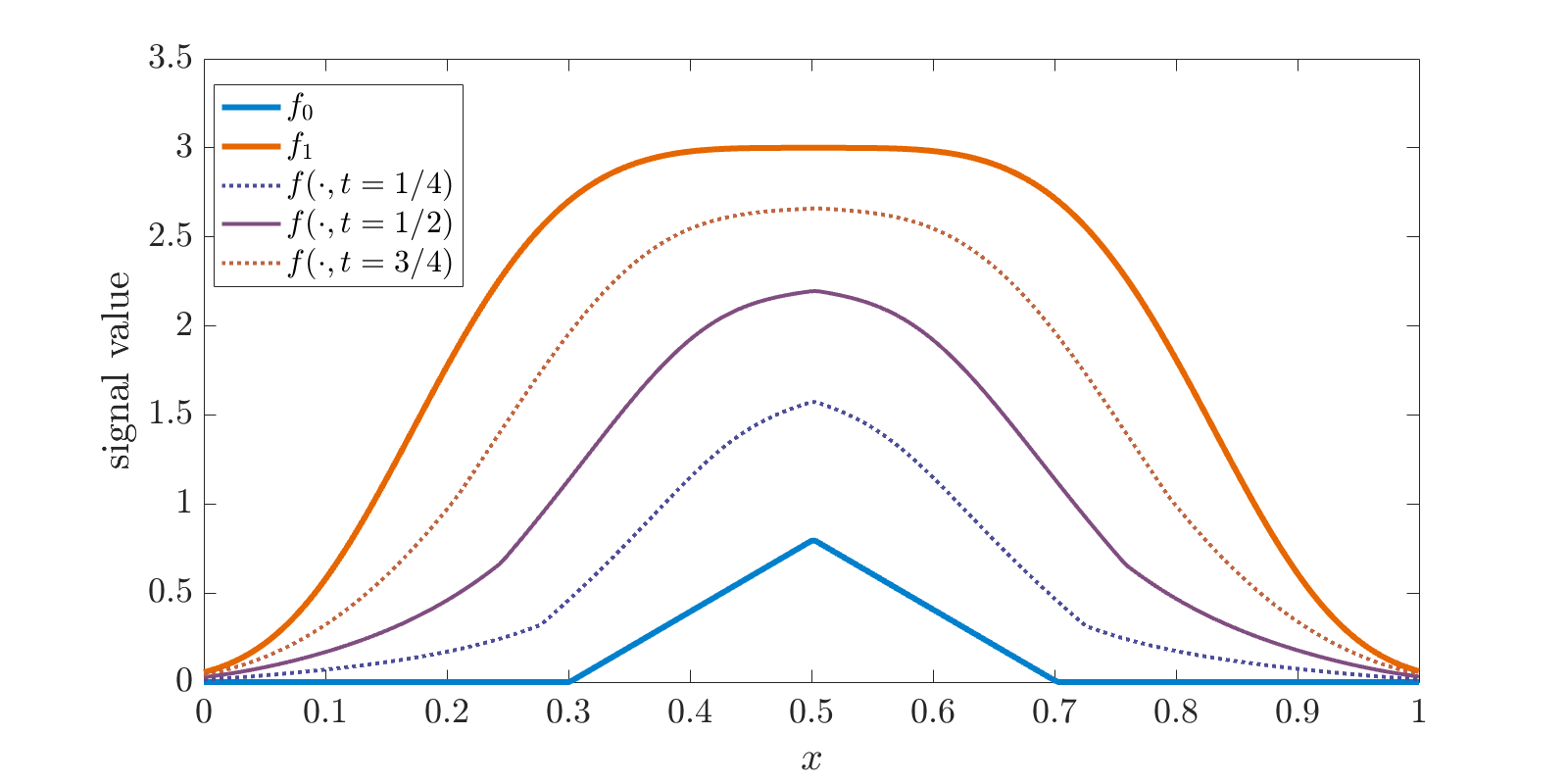}
    \caption{The $d_{HV}$ geodesic balances horizontal expansion and vertical growth.  We use $\kappa = 0.2$, $\lambda = 0.01$, $\veps = 0.02$, $300$ space intervals, and $290$ time intervals.}
    \label{fig:c5-Yunan}
\end{figure}

\begin{figure}
    \centering
    \includegraphics[width = 0.6\textwidth]{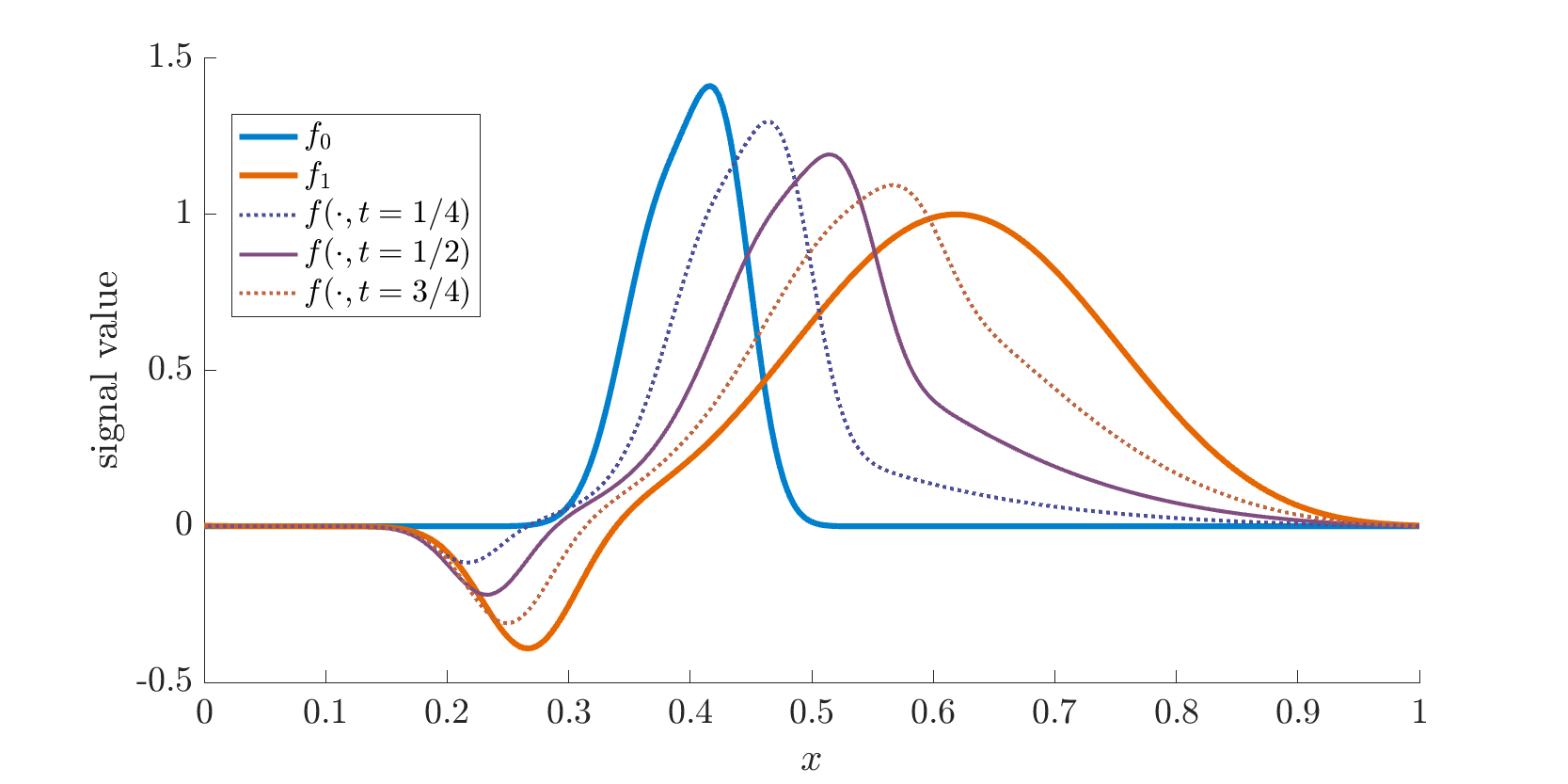} \\
    \includegraphics[width = 0.6\textwidth]{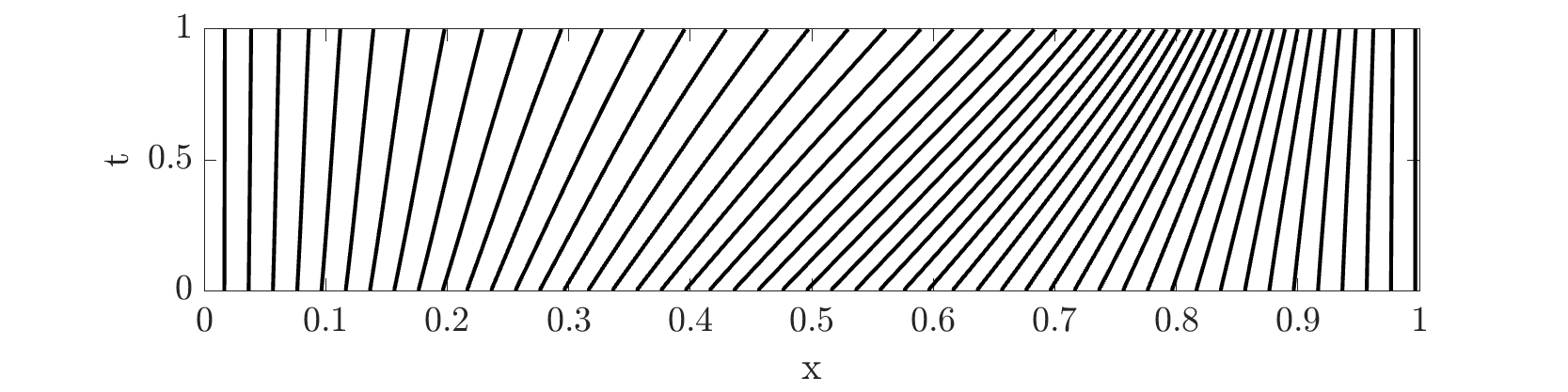}
    \caption{We show the initial signals along the computed geodesic for hyperparameters given by formula~\eqref{eq:paramaters} with $L=0.3$, $W = 0.4$ and $H=0.4$. The bottom image shows the flow of the optimal velocity $v$. 
    }
    \label{fig:signal8_Yunan}
\end{figure}

\begin{figure}
    \centering
    \subfloat[original signals]{\includegraphics[width = 0.5\textwidth]{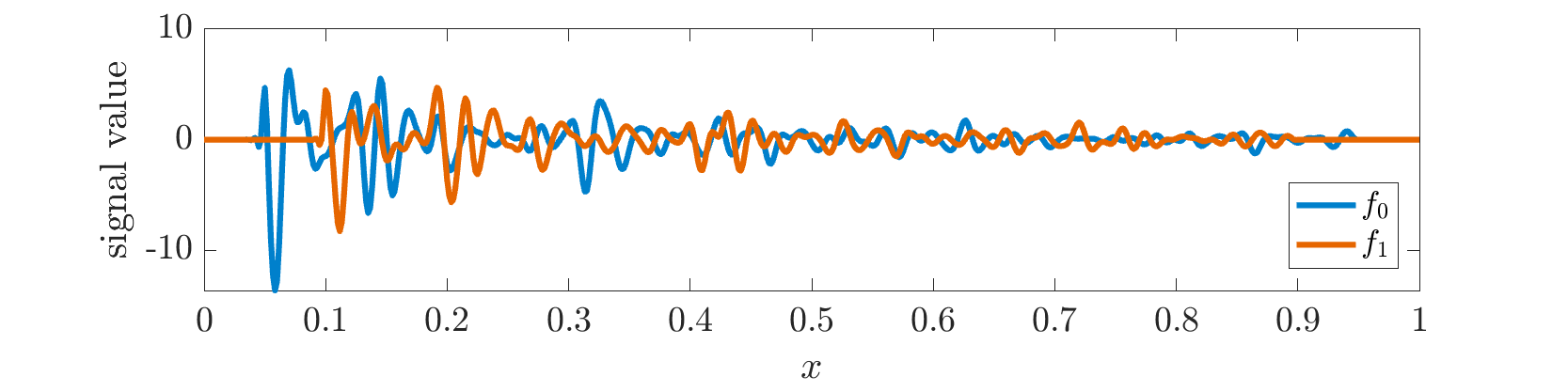}\label{fig:seismic1}}
    \subfloat[normalized signals $\hat{f}_0$ and $\hat{f}_1$ for OT]{\includegraphics[width = 0.5\textwidth]{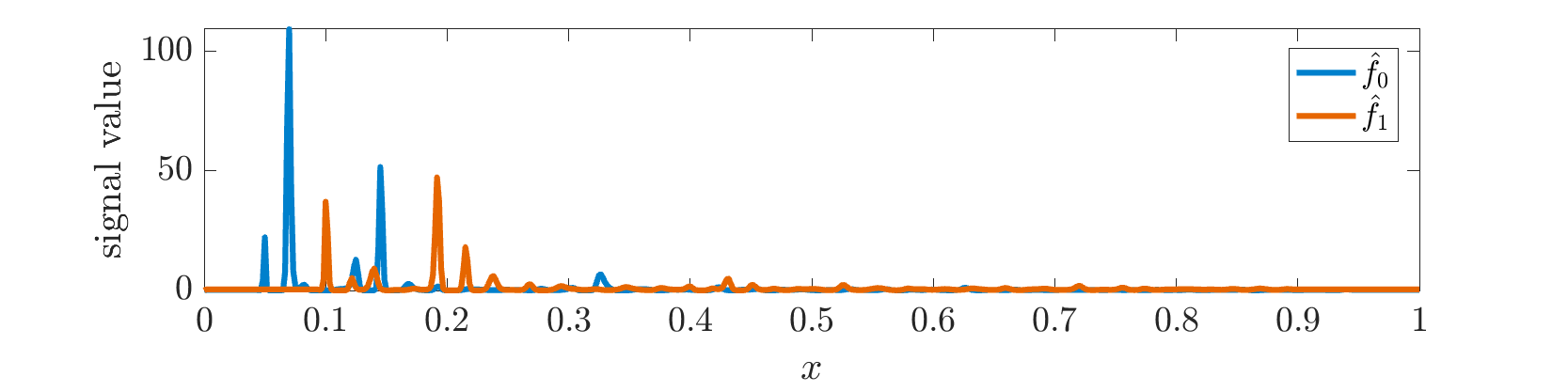}\label{fig:seismic2}}\\
    \subfloat[the optimizer $f(x,t)$ based on the HV geometry]{\includegraphics[width = 1\textwidth]{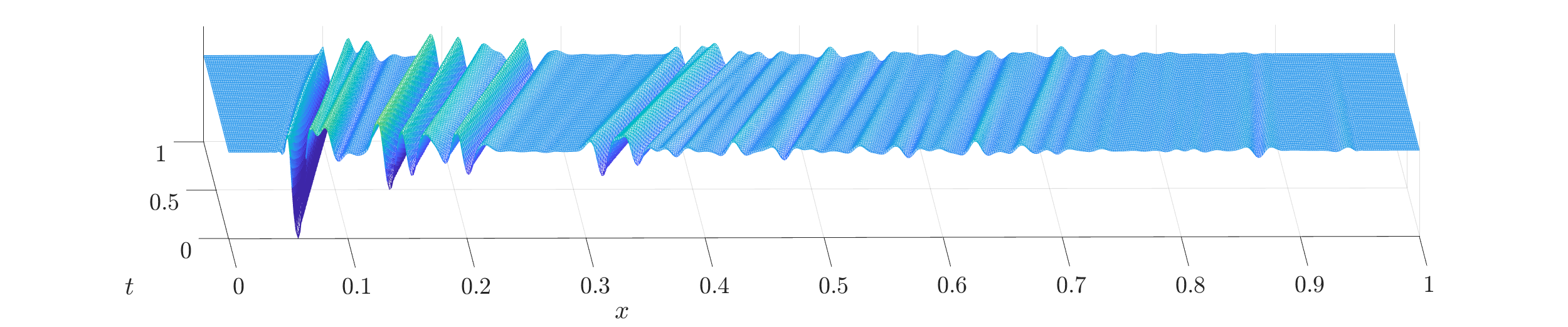}\label{fig:seismic3}}\\
    \subfloat[OT displacement interpolation between $\hat{f}_0$ and $\hat{f}_1$]{\includegraphics[width = 1\textwidth]{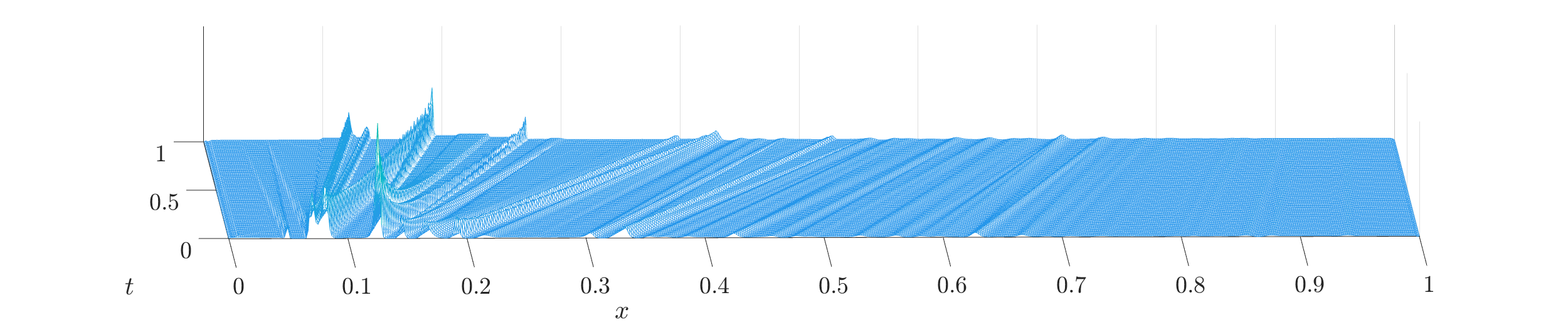}\label{fig:seismic4}}
    \caption{We compare the matching of two 1D seismic signals by the flow map induced by the HV geometry and optimal transportation with a quadratic cost (applied to normalized signals). (A): the original signal used in the HV geometry; (B): the normalized signal to satisfy the OT requirements; (C) optimizer $f(x,t)$ from the HV geometry; (D) the displacement interpolation between $\hat{f}_0$ and $\hat{f}_1$ based on the optimal transport map.}
    \label{fig:seismic}
\end{figure}

\begin{figure}
\centering
    \includegraphics[width = 0.80\textwidth]{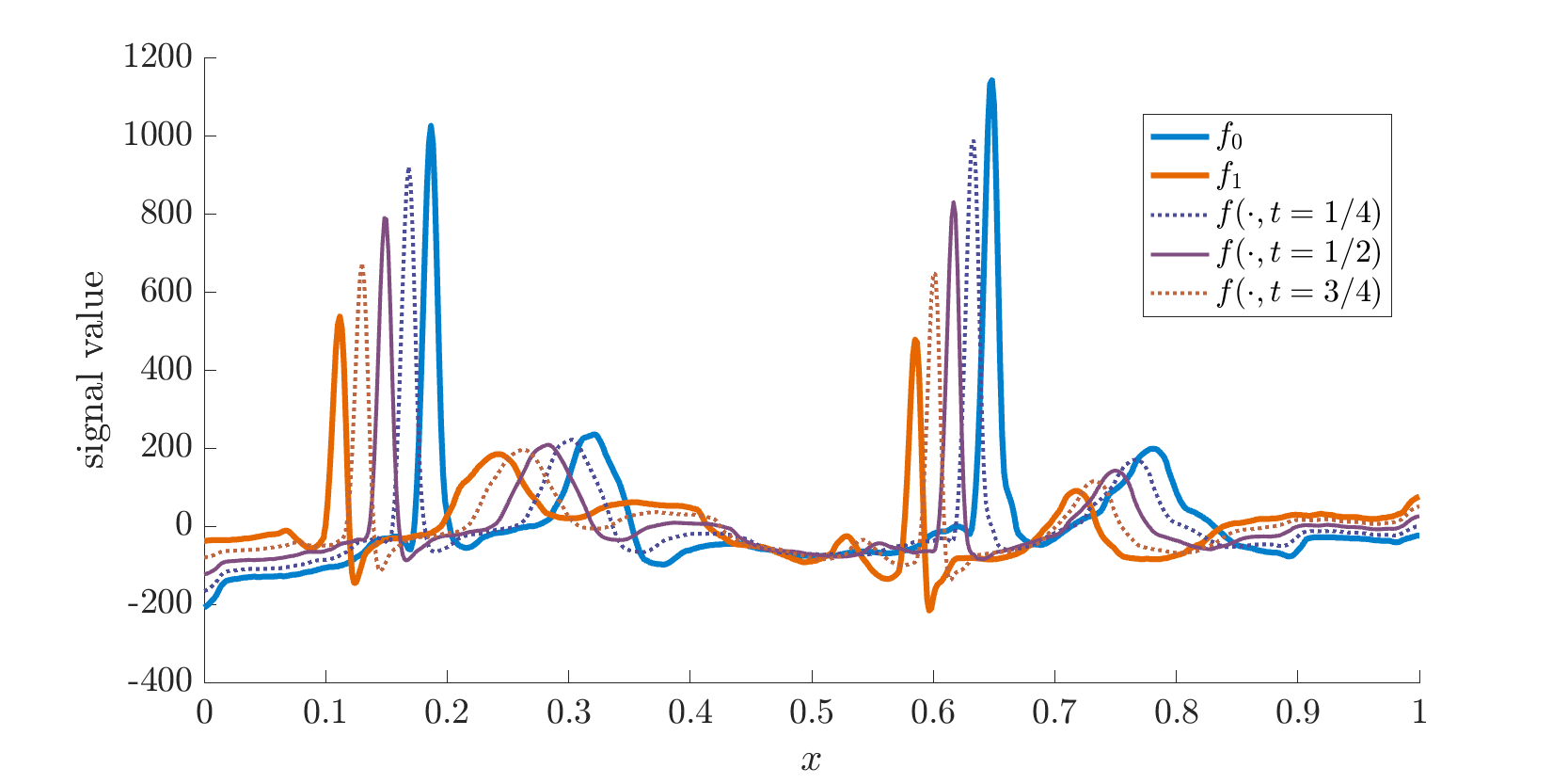}
    \caption{Shown is the geodesic between two ECG signals under the HV geometry. We note that the horizontal transform dominates for some features, while some parts are matched by vertically moving the graph. We set $\kappa$, $\lambda$, and $\veps$ based on~\eqref{eq:paramaters} with $L = W = 0.1$ and $H = 300$. We use $600$ space intervals and $150$ time intervals.}
    \label{fig:ECG}
\end{figure}

\begin{example} 
In Figure~\ref{fig:signal8_Yunan}, we compare two signed signals with a single bump where the bumps' widths and locations do not agree. We compute geodesic between them under the HV geometry for the chosen hyperparameters decided by the parameter estimate \eqref{eq:paramaters} discussed in Section~\ref{subsec:para}. 
Roughly estimate that $L=0.3$, $W = 0.4$ and $H=0.4$, which yields $\kappa=0.1$, $\lambda=0.01$ and $\veps=0.005$. We note that a wide set of parameters would have produced similar geodesics. 
The path is discretized using $300$ spatial and $290$ time intervals. The flow map corresponding to the optimal velocity $v$ is plotted at the bottom of Figure~\ref{fig:signal8_Yunan}, where one can observe the transport feature mapping the peak of the source signal (blue) to the peak of the target signal (red). 
\end{example}

\begin{example} \emph{(Seismic signals)}
Using optimal transportation (OT) for seismic applications has faced difficulties from the constraints that the signals should be nonnegative with equal total mass~\cite{engquist2019seismic,engquist2022optimal}. In this example, we test the proposed HV geometry for comparing synthetic seismic signals shown in Figure~\ref{fig:seismic1}. To compare them using OT, we may normalize the signals first so that they are nonnegative with equal total mass; see Figure~\ref{fig:seismic2} for the normalized signals $\hat{f}_0, \hat{f}_1$, squared and then scaled to integrate to one~\cite{engquist2019seismic}. Figures~\ref{fig:seismic3} and~\ref{fig:seismic4} show the HV and OT geometry velocity flow maps, respectively. We use a quadratic cost function for OT. For the HV geometry, we set $H = 2$, $L = 0.1$ and $W=0.02$ for parameters in~\eqref{eq:paramaters} presented in Section~\ref{subsec:para}. Note that in the classic OT, all mass has to be transported through the monotonic map $T$ such that $\int_0^x \hat{f}_0(y)\dd y = \int_0^{T(x)} \hat{f}_1(y) \dd y$. This may lead to mass being transported far away and unevenly, as illustrated in Figure~\ref{fig:seismic4}. The proposed HV geometry not only can handle signed signals naturally, avoiding the artifacts by preprocessing the signal but also enforces regularity to the velocity. 
\end{example}

\begin{example}
Finally, we consider a real-world example. We compare two heartbeats from the ECG database PhysioNet 2017 Challenge \cite{8331486,doi:10.1161/01.CIR.101.23.e215}; see Figure~\ref{fig:ECG}. The geodesic is computed in the space of signals according to the HV geometry for hyperparameters given by~\eqref{eq:paramaters} below in Section~\ref{subsec:para}, with $L=W =0.1$ and $H=300$, which one estimates from the given data.  We use $600$ space intervals and $150$ time intervals. We note that large features (R-peaks and T-waves) are matched in a desirable way via horizontal transport, while perturbations of small amplitude are matched via a vertical adjustment. 
This illustrates the benefits of the HV geometry.
\end{example}

\subsection{Parameter selection.}\label{subsec:para}
An important element in using HV geometry to analyze signals is how to select the parameters $\kappa, \lambda$, and $\veps$. This depends on the length scales present in the data. Here we give a simple rule for selecting the parameters based on the scaling properties of the distance; see Proposition \ref{prop:properties}. Let $H$ be the average vertical variation in the data, $W$ be the typical width of features in the data, and $L$ be the maximum horizontal distance between the features to be matched. Then we suggest using
\begin{equation} \label{eq:paramaters}
\kappa = 0.01 \frac{H^2}{L^2}, \quad \lambda = 0.02 H^2, \;\te{ and }\;\, \veps = 0.2 H^2 W^2.
\end{equation}
As we mentioned, the scaling of the parameters respects the invariances of the distance. The real number coefficients ($0.01$, $0.02$, $0.2$) are based on numerical experiments with different signal types. 

We note that given a data set $\mathcal F = \{f_1, \dots, f_n\}$ a good suggestion for $H$ would be the typical $L^2$ distance between the signals: 
\[ H^2 = \frac{1}{n^2} \sum_{i} \sum_{j} \| f_i  - f_j\|_{L^2}^2.  \]
We also note that $H^2$  is twice the variance and can thus  be computed as a sum over one index:
\[ H^2 = \frac{2}{n} \sum_i \| f_i \|_{L^2}^2  - 2 \,
\left\| \frac{1}{n} \sum_i f_i \right\|_{L^2}^2. \]
We remark that, for most signals, the outcome is not very sensitive to the parameters.


\section*{Acknowledgements}
We thank Jianming Wang for providing a key idea for the result in Section \ref{sec:deg}. 
We are grateful to Katy Craig for stimulating discussions. 
DS and RH are grateful to NSF for support via grant DMS 2206069. This work was done in part while DS and YY were visiting the Simons Institute for the Theory of Computing in Fall 2021. YY acknowledges support from Dr.~Max R\"ossler, the Walter Haefner Foundation and the ETH Z\"urich Foundation.

\bibliographystyle{siam}
\bibliography{HVbib}

\end{document}